\numberwithin{equation}{section}
\newtheorem{theorem}{Theorem}[section]
\newtheorem{lemma}[theorem]{Lemma}
\newtheorem{remark}[theorem]{Remark}
\def\e{\varepsilon}
\newtheorem{assumption}{Assumption}
\begin{document}
\title{A bi-fidelity method for the multiscale Boltzmann equation 
with random parameters 
\thanks{L. Liu was partially supported by the funding DOE--Simulation Center for Runaway Electron Avoidance and Mitigation, project No.\,DE-SC0016283.
X. Zhu was supported by the Simons Foundation (504054).}
}

\date{\today}
\author{Liu Liu\footnote{The Oden Institute for Computational Engineering and Sciences, University of Texas at Austin, Austin, TX, USA (lliu@ices.utexas.edu)}, 
Xueyu Zhu \footnote{Department of Mathematics, University of Iowa, Iowa City, IA, USA (xueyu-zhu@uiowa.edu)}
}

\maketitle

\abstract{In this paper, we study the multiscale Boltzmann equation with multi-dimensional random parameters by a bi-fidelity stochastic collocation (SC) method developed in \cite{NGX14, ZNX14, ZX17}. By choosing the compressible Euler system as the low-fidelity model, we adapt the bi-fidelity SC method to combine computational efficiency of the low-fidelity model with high accuracy of the high-fidelity (Boltzmann) model. With only a small number of  high-fidelity asymptotic-preserving solver runs for the Boltzmann equation, the bi-fidelity approximation can capture well the macroscopic quantities of the solution to the Boltzmann equation in the random space. A priori estimate on the accuracy between the high- and  bi-fidelity solutions together with a convergence  analysis is established. Finally, we present extensive numerical experiments to verify the efficiency and accuracy of our proposed method. 
}

{\bf Keywords.} Boltzmann equation, uncertainty, bi-fidelity models, multiple scales, stochastic collocation

\section{Introduction}
Kinetic equations are widely used in classical fields such as rarefied gas, plasma physics, astrophysics, 
also in emerging areas such as semiconductor device modeling, social and biological sciences. 
They model the non-equilibrium dynamics of a large number of particles from a statistical point of view \cite{Chapman}. 
The Boltzmann equation, as one of the most fundamental kinetic equations \cite{Cercignani-Book}, is an integro-differential equation describing the time evolution of probability density distribution of particles in rarefied gas. 

There have been extensive studies on the Boltzmann and related kinetic models, both in theory and numerical computation \cite{Villani, GL14}. 
One of the main computational challenges is that the kinetic problems often encounter multiple temporal and spatial scales, characterized by the Knudsen number, the dimensionless mean free path, that may vary in 
orders of magnitude in the computational domain, covering the regimes from fluid, transition to rarefied. 
Asymptotic-preserving (AP) schemes, which preserve the asymptotic transition from one scale to another at the discrete level, have been shown to be an effective computational paradigm in the past two decades \cite{Jin99, Jin12}.  
They allow efficient numerical approximations in all regimes--coarse mesh and large time steps can be used even in the fluid dynamic regime, without numerically resolving the small Knudsen number.
For the space inhomogeneous Boltzmann equation, AP schemes were first designed using BGK-operator penalty based method \cite{FJ}. 
Other approaches include the exponential integrator based methods \cite{DP11} or micro-macro decomposition \cite{Lemou-MM, GJL-MM}. Despite the successful development of AP solvers in recent years, the complexity and memory cost for computing the collision term still remains challenging \cite{PR00, FastSpectral, Gamba-Harsha, Gamba-Hu}. 

Another challenge, which has been ignored in the community until recently, is the issue of {\it uncertainties in kinetic models.}
Kinetic equations, derived from $N$-body Newton's equations via the mean-field limit \cite{BGP},
typically contain an integral operator modeling interactions between particles. 
 Calculating the collision kernel from first principles is extremely complicated and not possible for complex particle systems, thus only empirical formulas are used for general particles \cite{Cercignani}, which means that the collision kernel contains many uncertainties. 
 Other sources of uncertainties {\color{black} may also come from 
 inaccurate} measurements of the initial or boundary data, 
 forcing or source terms. 
See for example \cite{jin2015asymptotic,DPZ-Review,HJ-Review, LJ-UQ, Esther, LL-LB, LL-BP} 
for recent efforts on uncertainty quantification for kinetic equations, in particular  \cite{SHJ17,HJS-Boltz}, 
where numerical schemes for the Boltzmann equation with multi-dimensional random inputs have been studied.

{\color{black} One widely used method} in uncertainty quantification is stochastic collocation (SC) method, especially in conjunction with the gPC expansion and high-performance grids such as sparse grids. There have been many works developed, for example \cite{Babuska, Roberts, NXiu, Webster, GWZ14, Xiu07, Schwab}.  
One of the challenges central to collocation approaches is the simulation cost. 
For many complex systems, in particular, the multiscale Boltzmann equation with multi-dimensional random inputs we are studying, 
an accurate high-fidelity deterministic simulation can be so time-consuming and memory demanding that only a few high-fidelity simulations can be afforded. 
As many stochastic algorithms such as SC require repetitive implementations of the deterministic solver, the overall 
accurate stochastic simulation can be difficult and even computationally infeasible. 

Fortunately, there usually exist some approximate, less complex low-fidelity models for practical problems. 
Compared to the high-fidelity models, these low-fidelity models usually contain simplified physics and/or are simulated on a coarser physical mesh, and consequently, own a cheaper computational cost. 
Although their accuracy may not be high, the low-fidelity models are designed in such a way that they can resolve or capture certain important features of the underlying problem and produce reliable and qualitative predictions. 
Despite numerous multi-fidelity algorithms have been developed in different communities from different perspectives  \cite{giles2008multilevel,cliffe2011multilevel,YangTT18,kennedy2000predicting,tuo2014surrogate,perdikaris2017nonlinear,peherstorfer2016optimal,peherstorfer2016multifidelity,ng2012multifidelity,eldred2017multifidelity,NGX14, ZNX14, ZX17}, there have not been many attempts for kinetic equations with uncertainty in the multi-fidelity setting, except for the recent work \cite{dimarco2019multi}. In \cite{dimarco2019multi}, the authors
take the steady state or approximated time-dependent solution with the 
asymptotic behavior close to the fluid limit as control variate models to accelerate the convergence of standard Monte Carlo methods. 

{\color{black}Despite numerous multi-fidelity algorithms have been developed for different applications \cite{MF1, MF2, MF3, MF4, MF5, MF6}},
the goal of our work is to adapt the bi-fidelity method developed in \cite{NGX14, ZNX14, ZX17}
to efficiently approximate the high-fidelity solutions of the Boltzmann equation with multi-dimensional random parameters and multiple scales. 
In rarefied gas dynamics, fluid models are derived when the mean free path of a particle is very small compared to the typical macroscopic length. One can perform a Hilbert or Chapman-Enskog expansion
(\cite{Cercignani-Book}) of the solution to the Boltzmann equation in powers of the Knudsen number $\e$. At the leading order in $\e$, the distribution function approaches 
a local equilibrium -- a Maxwellian whose parameters are the fluid variables (density, mean velocity and temperature) governed by the compressible Euler equations. 
When $\e$ is small, the Euler equations provide us a good accuracy in the physical space and numerical efficiency. 
Motivated by the above observation, we take advantage of this multiscale nature of the kinetic problem and choose the low-fidelity model based on the Euler equations in this work. More specifically, we connect
an $\e$-dependent microscopic model (Boltzmann equation) and its macroscopic model (Euler equations) through the corresponding macroscopic quantities of the Boltzmann equation. Our numerical experiments demonstrate that the bi-fidelity solutions can approximate the high-fidelity solutions well at a much reduced computational cost.  

This paper is outlined as follows: In Section \ref{sec:1}, we give an introduction of the Boltzmann equation and its macroscopic equations. Section \ref{sec:2} reviews the framework of SC method with multifidelity models and discusses how it is adapted to our problem under study. Section \ref{sec:3} establishes the convergence of the bi-fidelity approximation to the high-fidelity solution under suitable assumptions. In section \ref{sec:4}, we provide extensive numerical experiments to illustrate the effectiveness and efficiency of our proposed method, 
where kinetic, fluid and mixed regimes are carefully examined. We conclude the paper in Section \ref{sec:5}. 

\section{Introduction of the Boltzmann equation}
\label{sec:1}
\subsection{The Boltzmann equation with uncertainty}
We first give an introduction to the classical (deterministic) Boltzmann equation, known as one of the most celebrated kinetic equations for rarefied gas. 
A dimensionless form reads
\begin{equation}\label{Boltz}
\partial_t f + v\cdot \nabla_x f= \frac{1}{\varepsilon}\mathcal Q(f, f), \qquad (x,v)\in\Omega\times\mathbb R^{d_v}, 
\end{equation}
where $f(t,x,v)$ is the probability density distribution function, modeling the probability of finding a particle at time $t>0$, at position {\color{black}
$x$ in a bounded domain $\Omega \subset\mathbb R^{d_x}$},
with velocity $v\in\mathbb R^{d_v}$, {\color{black}where $d_x$ and $d_v$ are the dimensions of the $x$ and $v$ variables.}
Periodic boundary condition is considered. 
The parameter $\e$ is the Knudsen number defined as the ratio of the mean free path over a typical length scale such as the size of the spatial domain. 
The collision operator $\mathcal Q$ is a quadratic integral operator modeling the binary elastic collision between particles, and is given by 
\begin{equation} \mathcal Q(f, f)(v) = \int_{\mathbb R^{d_v}}\int_{\mathbb S^{d_v -1}} 
B(|v-v_{\ast}|, \cos\theta) \left(f(v^{\prime})f(v_{\ast}^{\prime}) - f(v)f(v_{\ast})\right) d\sigma dv_{\ast}. 
\end{equation}
{\color{black}$(v, v_{\ast})$ and $(v^{\prime}, v_{\ast}^{\prime})$ are the velocity pairs before and after the collision, during which the momentum and energy are conserved; thus
$(v^{\prime}, v_{\ast}^{\prime})$ can be expressed in terms of $(v, v_{\ast})$ as follows:}
\begin{align}
\begin{cases}
&\displaystyle v^{\prime}= \frac{v+v_{\ast}}{2} + \frac{|v-v_{\ast}|}{2}\sigma, \\[4pt]
&\displaystyle v_{\ast}^{\prime} = \frac{v+v_{\ast}}{2} - \frac{|v-v_{\ast}|}{2}\sigma, \end{cases}
\end{align}
with the vector $\sigma$ the scattering direction varying on the unit sphere $\mathbb S^{d_v -1}$. 
The collision kernel $B$ is a non-negative function of the form 
$B(v, v_{\ast},\sigma) = B(|v-v_{\ast}|, \cos\theta)$, where $\cos\theta = \frac{\sigma\cdot (v-v_{\ast})}{|v-v_{\ast}|}$ is the deviation angle.
We consider the variable hard sphere (VHS) model \cite{Bird}, with 
a commonly used form for the collision kernel: 
{\color{black}\begin{equation}\label{Kernel} B(v, v_{\ast},\sigma) =  b\, |v-v_{\ast}|^{\gamma}, \qquad -d_v < \gamma \leq 1, \end{equation}
where $b$ is a positive constant, $\gamma>0$ corresponds to the hard potential, and $\gamma<0$ is the soft potential. }
Denote {\color{black}the vector}\begin{equation}
\label{mdef}
m(v) = \left(1, v, \frac{|v|^2}{2}\right)^{\top}, \end{equation}
then 
\begin{equation}\label{Q_cons}\int_{\mathbb R^{d_v}}\mathcal Q(f, f)m(v)\, dv = 0, \end{equation}
which correspond to the conservation of mass, momentum and kinetic energy of the collision operator. 

The celebrated Boltzmann's H-theorem gives the dissipation of entropy (\cite{Cercignani-Book}): 
$$ \int_{\mathbb R^d} \mathcal Q(f, f)\ln f\, dv \leq 0. $$
Furthermore, the equality holds if and only if $f$ reaches the equilibrium state 
\begin{equation}\label{Maxwell} M(v)_{\rho, u, T} = \frac{\rho}{(2\pi T)^{\frac{d_v}{2}}}\exp\left(-\frac{|v-u|^2}{2T}\right), 
\end{equation}
which is known as the Maxwellian.
Here $\rho$, $u$ and $T$ are  the density, bulk velocity and temperature, respectively: 
\begin{equation}\label{macro}\rho = \int_{\mathbb R^{d_v}} f(v)\, dv, \qquad u = \frac{1}{\rho}\int_{\mathbb R^{d_v}} f(v)v\, dv, 
\qquad T = \frac{1}{d_v \rho} \int_{\mathbb R^{d_v}} f(v)|v-u|^2\, dv. \end{equation}

There are many sources of uncertainties in the Boltzmann equation, 
such as the initial data, boundary data, and collision kernel. We introduce the Boltzmann equation with uncertainty
\begin{align}
\label{model}
\left\{
\begin{array}{l}
\displaystyle \partial_t f + v\cdot\nabla_x f  =
\frac{1}{\varepsilon}\mathcal Q_z(f, f),   \\[4pt]
\displaystyle  f(0,x,v,z)= f_{I}(x,v,z), \qquad
(x,v)\in\Omega\times\mathbb R^{d_v},  z\in I_z. 
\end{array}\right.
\end{align}
Here $z\in I_z$ is a $d$-dimensional random parameter with probability distribution $\pi(z)$ known in priori characterizing 
the uncertainty in the system. Without loss of generality, 
we only consider periodic boundary condition in space throughout this paper. 

\subsection{The macroscopic fluid equations}  
\label{fluid}
When the Knudsen number $\varepsilon>0$ becomes very small,
the macroscopic fluid dynamics describing the evolution of averaged quantities such as the density $\rho$, momentum $\rho u$ 
and temperature $T$ of the gas, namely, the compressible Euler or Navier-Stokes equations, become adequate \cite{Bardos93, BGP}. 

{\color{black}Multiplying (\ref{model}) by $m(v)$ and integrating with respect to $v$, by using the conservation property of $\mathcal Q$
given in (\ref{Q_cons}), {\color{black}one gets} a non-closed system of conservation laws
\begin{equation}\label{INS} \partial_t \begin{pmatrix} \rho \\ \rho u \\ E \end{pmatrix}
+ \nabla_x \cdot  \begin{pmatrix} \rho u \\ \rho u \otimes u + \mathbb P \\
E u + \mathbb P u + \mathbb Q  \end{pmatrix} = 0, 
\end{equation}
where $E$ is the total energy defined by
\begin{equation}\label{Energy}
E=\langle \frac{1}{2}|v|^2 f \rangle = \frac{1}{2}\rho |u|^2 + \frac{d_v}{2}\rho\, T,  \end{equation}
{\color{black}with $\langle\, \cdot\, \rangle$ denoted as a velocity average of the argument,} 
$$\langle g \rangle = \int_{\mathbb R^{d_v}} g(v)\, dv. $$
Here $\mathbb P = \langle (v-u)\otimes (v-u)f \rangle$ is the pressure tensor, 
and $\mathbb Q = \frac{1}{2}\langle (v-u) |v-u|^2 f \rangle$ is the heat flux vector. 
Note that the variables $\rho$, $u$ and $E$ in \eqref{INS} depend on the random parameter $z$. }

When $\varepsilon\to 0$, $f\to M(v)_{\rho, u, T} $. We can approximate $f$ by $M(v)_{\rho, u, T}$ and use the expression 
(\ref{Maxwell}), $\mathbb P$ and $\mathbb Q$ become 
$$ \mathbb P = p\, \mathbb I, \qquad \mathbb Q = 0, $$
where $p = \rho\, T$ is the pressure, $\mathbb I$ is the identity matrix. Then (\ref{INS}) reduces to the compressible Euler equations
of gas dynamics for a mono-atomic gas: 
\begin{equation}\label{Euler}
\partial_t \begin{pmatrix} \rho \\ \rho u \\ E \end{pmatrix} + 
\nabla_x \cdot \begin{pmatrix} \rho u \\ \rho u\otimes u + p\,\text{I} \\ (E+p) u \end{pmatrix} = 0, 
\end{equation}
which is known as a first order approximation with respect to $\varepsilon$ to the Boltzmann equation (\ref{model}). 
By the Chapman-Enskog expansion, 
the compressible Navier-Stokes equations give a second order approximation in $\e$ to the distribution function of the Boltzmann equation \cite{Cercignani-Book}. 

\section{A stochastic collocation method with bi-fidelity models}
\label{sec:2}

In this section, we first briefly review an efficient bi-fidelity approximation to the high-fidelity solution studied in 
\cite{NGX14, ZNX14}, then we shall discuss
the motivation of our choices of the low-fidelity model in our current study. 

\subsection{A bi-fidelity Algorithm}
Assume we have access to the high-fidelity solutions $u^H(z)$ and low-fidelity solutions $u^L(z)$. Let 
$M$ be the number of affordable low-fidelity simulation runs, which can be very large. $N$ denotes the number of high-fidelity simulation runs that can be afforded and is often very small, 
i.e., $M\gg N$. 
Let $\gamma=\{z_1, \cdots, z_k\}$, $k\geq 1$ be a set of sample points in $I_z$. Denote the low-fidelity snapshot matrix 
$u^L(\gamma) = [u^L(z_1), \cdots, u^L(z_k)]$ and the corresponding low-fidelity approximation space
$$ U^L(\gamma) = \text{span}\{u^L(\gamma)\} = \text{span}\{ u^L(z_1), \cdots, u^L(z_k) \}. $$  
Similarly, the high-fidelity snapshot matrix and the correponding high-fidelity approximation as follows:
$$ u^H(\gamma) = [u^H(z_1), \cdots, u^H(z_k)], \qquad U^H(\gamma)=\text{span}\{u^H(\gamma)\}. $$

The bi-fidelity algorithm for approximating the high-fidelity solution consists of offline and online stages. In the offline stage, we employ the cheap low-fidelity model to explore the parameter space to find the most important parameter points. Within the online stage, we learn the approximation rule from the low-fidelity model for any given $z$, and apply it to construct the bi-fidelity approximation. The detailed algorithm is summarized in Algorithm.\ref{alg}.

\begin{algorithm}[ht]
\caption{bi-fidelity approximation}
\label{alg}
\label{BiFi-pod}

\Offline

Select a sample set $\Gamma = \{z_1, z_2, \hdots, z_M\}\subset I_z $.

Run the low-fidelity model $u_l(z_j)$ for each $z_j \in \Gamma$.

Select $N$ ``important" points from $\Gamma$ and denote it by $\gamma_N=\{z_{i_1}, \cdots z_{i_N} \} \subset\Gamma$. Construct the low-fidelity approximation space $U^L(\gamma_N)$.

Run high-fidelity simulations at each sample point of the selected sample set $\gamma_N$. Construct the high-fidelity approximation space $U^H(\gamma_N)$. 

\Online

For any given $z$, run the low-fidelity model to get the corresponding low-fidelity solution $u^L(z)$ and  compute the low-fidelity coefficients by projection:
$$u^L(z) \approx \mathcal{P}_{U^L(\gamma_N)}u^L = \sum_{k=1}^N c_k(z)u^L(z_k).$$

Construct the bi-fidelity approximation by applying the sample approximation rule learned from the low-fidelity model:
$$u^B(z)  = \sum_{k=1}^N c_k(z)u^H(z_k).$$

\end{algorithm}

Most of the steps in this algorithm are straightforward. It would be instructional to provide details for Step 3 (point selection) and Step 6 (bi-fidelity reconstruction).

{\bf Point selection}. To select the subset $\gamma_N$, 
we shall search the parameter space by the greedy algorithm proposed in \cite{NGX14, ZNX14}. 
Start with a trivial subspace $\gamma_0=\varnothing$, and assume that the first $k-1$ important points $\gamma_{k-1}=\{ z_{i_1}, \cdots, z_{i_{k-1}}\} \subset \Gamma$ have been selected. We shall choose the next point $z_{i_k} \in \Gamma$ as the point that maximizes the 
distance between its corresponding low-fidelity solution and the approximation space $U^L(\gamma_{k-1})$, spanned by the low-fidelity solutions on the existing point set $\gamma_{k-1}$, i.e., 
\begin{equation}\label{Point} z_{i_k} = \arg\max_{z\in\Gamma}\text{dist}(u^L(z), U^L(\gamma_{k-1})), 
\qquad \gamma_k = \gamma_{k-1} \cup z_{i_k}, \end{equation}
where $\text{dist}(v,W)$ is the distance function between $v\in u^L(\Gamma)$ 
and subspace $W\subset U^L(\Gamma)$. 
The greedy procedure essentially serves the purpose of searching the linear independent basis set in the parameterized low-fidelity solution space.
We remark that the whole algorithm allows an efficient implementation by standard linear algebra operations. See \cite{NGX14, ZNX14} for more technical details. 

{\bf Bi-fidelity approximation}. 
In the offline stage, we have constructed the low- and high-fidelity approximation space, $U^L(\gamma_N)$ (step 3) and $U^H(\gamma_N)$ (step 4), respectively. During the online stage, for any 
given sample point $z\in I_z$, we shall project the corresponding low-fidelity solution $u^L(z)$ onto the low-fidelity approximation space
$U^L(\gamma_N)$: $$u^L(z) \approx \mathcal P_{U^L(\gamma_N)}[u^L(z)] = \sum_{k=1}^N c_k(z) u^L(z_{i_k}), $$ 
where 
$\mathcal P_{V}$ is the projection operator onto a Hilbert space $V$ 
and the corresponding projection coefficients $\{ c_k\}$ are computed by the following projection:
\begin{equation}\label{Gc}
 {\bf G}^L {\bf c} = {\bf f}, \qquad {\bf f} = (f_k)_{1\leq k\leq N}, \qquad f_k = \langle u^L(z), u^L(z_{i_k})\rangle^L, 
 \end{equation}
where ${\bf G}^L$ is the Gramian matrix of $u^L(\gamma_N)$, defined by 
\begin{equation}\label{GM}
 ({\bf G}^L)_{ij} = \langle u^L(z_{i_k}), u^L(z_{j_k}) \rangle^L, \qquad 1 \leq k, i, j \leq N, 
\end{equation}
with $\langle\cdot,\cdot\rangle^L$ the inner product associated with the approximation space $U^L(\gamma_N)$.

These low-fidelity coefficients $\{c_k\}$ serve as the surrogate of the corresponding high-fidelity coefficients of $u^H(z)$. Therefore,
the sought bi-fidelity approximation of $u^H(z)$ can be constructed as follows:  

\begin{equation}\label{UB}
 u^B(z) = \sum_{k=1}^N c_k(z) u^H(z_{i_k}). 
\end{equation}

We emphasize that if the low-fidelity model can mimic the variations of the high-fidelity model in the parameter space,  the low-fidelity coefficients can be a good approximation of the corresponding high-fidelity coefficients for a given sample $z$. We refer interested readers to \cite{NGX14, ZNX14,hampton2018practical} for details of the error analysis and justifications.

{\color{black}
It is worth noting that since the number of low-fidelity basis is} typically small ($\mathcal{O}(10)$ in our numerical tests), the cost of computing the low-fidelity projection coefficients by solving the linear system \eqref{Gc} is negligible. The dominant cost of the online step is one low-fidelity simulation run. If the low-fidelity solver is much cheaper than the high-fidelity solver, the speedup during the online stage can be significant. 

\subsection{The high- and low-fidelity models in our problem}
\label{HLmodel}
Our purpose is to efficiently approximate  high-fidelity solutions for the uncertain Boltzmann equation (\ref{model}) for a fixed $z$, which is solved by a deterministic AP solver discussed in section \ref{AP-section}. 
It is well known that existing solvers for deterministic kinetic equations are time-consuming and memory demanding due to its high-dimensional nature in the physical space. With the random parameter, it is more challenging to fully sweep the multi-dimensional parameter space by solving the Boltzmann equation repeatedly, especially given the complicated nonlinear collision operator in our model. 

To mitigate this computational cost, we consider to choose  the compressible Euler equations (\ref{Euler}) as our low-fidelity model. It is a first-order approximation to the Boltzmann equation, which can mimic the variations of macroscopic quantities of the Boltzmann equation in the fluid regime up to a certain accuracy. Besides, it is worth noting that 
the macroscopic quantities do not depend on the velocity $v$ in \eqref{INS}. Therefore, 
solving the deterministic Euler equation is much easier and more efficient 
{\color{black}in terms of} memory and computational time compared to solving the deterministic Boltzmann equation (\ref{Boltz}). 
These facts motivate us to choose the Euler equation as the low-fidelity model in our numerical experiments. 
{\color{black}A comparison of the computation cost (CPU time) for the two models are given in Section \ref{sec:4}.}

\subsubsection{A high-fidelity solver}
\label{AP-section}

To solve the high-fidelity model Boltzmann equation, we shall resort to a high-fidelity asymptotic-preserving (AP) solver. There have been many works in developing robust numerical schemes for kinetic equations in the framework of the asymptotic-preserving scheme, see for example \cite{Lemou-NS, Lemou-MM, Klar, JPT, Perthame, GJL-MM}. 
As pointed in \cite{FJ}, AP  scheme for the kinetic equation has two major merits:
1) as the Knudsen number $\e$ go to zero, it automatically becomes a consistent and stable scheme for the limiting fluid equation, with the stability condition independent of $\e$
(i.e., $\Delta t$ independent of $\e$); 2) the implicit collision terms can be implemented explicitly, free of Newton-type nonlinear algebraic solvers. 
Compared with multi-physics domain decomposition methods \cite{DJM}, AP schemes avoid the coupling of physical equations of different scales where coupling conditions and interface locations are difficult to determine. In contrast with many existing multiscale solvers,  the AP schemes only require solving one equation -- the kinetic equation and it becomes a robust macroscopic solver automatically when $\e\to 0$. 

For our problem, we shall employ an AP scheme developed in \cite{FJ} for the deterministic rescaled Boltzmann equation \eqref{Boltz} as our high-fidelity solver. 
The main idea of \cite{FJ} is to penalize the collision term $\mathcal Q(f,f)$ by the BGK operator
$P(f)=M-f$ which can be inverted easily, thus the scheme can be solved explicitly. Let the initial distribution function be $f_{\text{in}}$ and consider periodic boundary conditions.   
The basic scheme consists of the following two major steps:
\begin{itemize}
\item[1.] 
We first discretize \eqref{Boltz}  in time by the following first-order semi-discrete scheme:
\begin{equation}\label{S1}\frac{f^{n+1}-f^n}{\Delta t} +  v \cdot \nabla_x f^n = \frac{\mathcal Q(f^n, f^n)- \beta^n(M^n-f^n)}{\varepsilon} + 
\frac{\beta^n(M^{n+1}-f^{n+1})}{\varepsilon}. \end{equation}
$f^{n+1}$ can be rewritten as follows: 
\begin{equation}\label{eqn-f} f^{n+1} = \frac{1}{1+\frac{\Delta t}{\e}\beta^n}\left[f^n - \Delta t\, v\cdot \nabla_x f^n + 
\frac{\Delta t}{\e}\left(\mathcal Q(f^n, f^n) - \beta^n(M^n - f^n) + \beta^n M^{n+1}\right)\right], 
\end{equation}
where $\beta$ is some constant {\color{black}that depends on the spectral radius of the linearized collision operator of $\mathcal Q$ around the Maxwellian $M$. We refer to \cite[Section 2]{FJ} on the intuition and justification of choosing $\beta$.} For example, one can set 
$$\beta^n = \sup\left| \frac{\mathcal Q(f^n, f^n)}{f^n - M^n}\right|, $$
and other choices $\beta$ are also available \cite{FJ, GJL-MM}.
We numerically evaluate the collision term $\mathcal Q(f^n, f^n)$ in \eqref{S1} by applying the fast spectral method developed in \cite{FastSpectral}. 

\item[2.]
Though the above equation appears to be implicit due to $M^{n+1}$, it can be solved explicitly, thanks to the conservation property of 
$\mathcal Q(f,f)$ and the BGK operator $P(f)$. 
By multiplying the equation \eqref{S1} with the vector $m(v)$ in \eqref{mdef}, we can get the following equation: 
\begin{equation}\label{Macro-f} \frac{W^{n+1} - W^n}{\Delta t} + \nabla_x \cdot\langle v m f^n \rangle =0. 
\end{equation}
where  $W:=(\rho, \rho u, E)$ that consists of the macroscopic quantities (mass, momentum and energy).
With $W^{n+1}$, one computes $M^{n+1}$ from the Maxwellian \eqref{Maxwell}. Finally, we can update $f^{n+1}$ explicitly from \eqref{eqn-f}. 
\end{itemize}
For the spatial discretization in \eqref{eqn-f}, we employ a second order upwind MUSCL scheme as in \cite{FJ}, and a second order minmod slope limiter
is used to suppress possible spurious oscillations near discontinuities or sharp gradients \cite{LeVeque}. 
In addition to \eqref{Macro-f}, a second order TVD scheme with a minmod slope limiter is also applied, 
see \cite{Lemou-NS,GJL-MM} for details of implementation.  

\subsection{A low-fidelity solver}
For the low-fidelity model, instead of solving the Euler system (\ref{Euler}) directly, we shall semi-discretize its equivalent form (\ref{Macro-f}) with $f$ replaced by the Maxwellian $M(v)_{\rho,u,T}$, i.e., 
\begin{equation}\label{Euler-scheme} \frac{W^{n+1} - W^n}{\Delta t} + \nabla_x \cdot\langle v m M^n \rangle =0, 
\end{equation}
where the relation between $W:=(\rho, \rho u, E)$ and $M$ is given in (\ref{Energy}) and (\ref{Maxwell}). The initial data of $\rho$, $u$ and $E$ are obtained from the initial distribution $f_{\text{in}}$ for the Boltzmann equation, by using \eqref{macro} and \eqref{Energy}. That is, the initial data for the low- and high-fidelity models are consistent.  
The scheme \eqref{Euler-scheme} is numerically solved in the same way as equation (\ref{Macro-f}) in the 
AP solver for the Boltzmann equation. Since Euler system is marching the macroscopic quantities, instead of marching the distribution solution $f$ to the Boltzmann {\color{black} equation, 
the scheme} \eqref{Euler-scheme} can be solved with a much reduced computational cost and memory consumption.

\begin{remark}
We remark that instead of taking the solution $f$ to the Boltzmann equation via the scheme
(\ref{eqn-f}) as the high-fidelity solutions, we consider its corresponding macroscopic quantities of interest 
$\mathcal{U}^H(z)=[\rho^H(z),u^H(z),T^H(z)]^\top$ as the high-fidelity snapshot solutions in order to connect the macroscopic quantities computed from the low-fidelity models. 
The low-fidelity solutions $\mathcal{U}^L(z)=[\rho^L(z),u^L(z), T^L(z)]^\top$ we considered are computed from the Euler system by using (\ref{Euler-scheme}). During the point selection step to construct $\gamma_N$ in Algorithm 1, we shall select the important parameter points based on the concatenated macroscopic quantity snapshot, namely $\mathcal{U}^L(z)=[\rho^L(z),u^L(z), T^L(z)]^\top$.
\end{remark}

\begin{remark}
We acknowledge that there could be other choices of low-fidelity models that lead to more accurate bi-fidelity approximation, e.g., the compressible Navier-Stokes equations. To estimate if the low-fidelity model would be useful for constructing a reasonable accurate bi-fidelity approximation, one can explore an a priori estimate developed in the recent work \cite{hampton2018practical}.
\end{remark}

\section{Accuracy and convergence analysis}
\label{sec:3}

To establish the accuracy and convergence results,  we first give a summary of the hypocoercivity framework and notations used in \cite{LJ-UQ}, 
then introduce the relation between the solutions to the Boltzmann and compressible Euler system in suitable norms. 
To study the difference between the high- and bi-fidelity solutions, 
one can split it into two parts: the projection error 
and the remainder. 
In section \ref{sec:p1}, 
{\color{black}we show the estimate for the projection error in Theorem \ref{Thm1}}. 
In section \ref{sec:p2}, we give the regularity of high-fidelity solution, then {\color{black}prove}
the accuracy and convergence results of our bi-fidelity method adapted to the Boltzmann equation in Theorem \ref{MainThm}. 

\subsection{The projection error}
\label{sec:p1}
The subject of hydrodynamic limits and rigorous derivations of macroscopic models such as the fundamental PDEs of fluid mechanics 
from the kinetic theory of gases is a challenging task and has been studied for decades, see for example \cite{DP-Lions, TPLiu, Russel, Golse1, Golse2, JLM, Guo}. 
We shall show that for each fixed $z\in I_z$, 
the error between solutions to the Euler system and the macroscopic quantities (\ref{macro}) 
obtained from the Boltzmann equation (with consistent initial data) is small and of order $\e$, which will be described in (\ref{h_Error}). 

{\bf Hypocoercivity framework}. First, we review the hypocoercivity framework and notations for the norms used in \cite{LJ-UQ}. 
Let $f$ be the solution to the Boltzmann equation (\ref{Boltz}). 
Consider a linearization around the global equilibrium and perturbation of $f$: 
$$ f = \mathcal M + \e \sqrt{\mathcal M}\, h, $$
with $$\mathcal M(v) = \frac{1}{(2 \pi)^{\frac{d_v}{2}}} e^{-\frac{|v|^2}{2}}, $$
then $h$ satisfies the perturbed equation 
\begin{equation}
\label{h-PE}
 \partial_t h + v\cdot\nabla_x h = \frac{1}{\e}\mathcal L(h) + \mathcal F(h, h),  
\end{equation} 
where the linearized operator $\mathcal L$ and the nonlinear operator $\mathcal F$ are given by 
\begin{align*}&\displaystyle 
\mathcal L(h) = \left(\sqrt{\mathcal M}\right)^{-1} \left[\mathcal Q(\sqrt{\mathcal M}h, \mathcal M)
+ \mathcal Q(\mathcal M, \sqrt{\mathcal M}h)\right], \\[2pt]
&\displaystyle \mathcal F(h, h) = 2 \left(\sqrt{\mathcal M}\right)^{-1}\mathcal Q(\sqrt{\mathcal M}h, \sqrt{\mathcal M}h). 
\end{align*}
Denote $\partial_l^j := \partial/\partial v_j\, \partial/\partial x_l$ for multi-indices $j$ and $l$. Introduce the following Sobolev norms: 
\begin{align}
\label{norm}
\begin{split}
\displaystyle ||h||_{H_{x,v}^s}^2 & = \sum_{|j| + |l| \leq s} || \partial_l^j h||_{L_{x,v}^2}, \qquad
||h||_{H_{x,v}^{s,r}} = \sum_{|\nu|\leq r} ||\partial^{\nu} h||_{H_{x,v}^s}^2, \\[4pt]
\displaystyle ||h||_{H_{x,v}^s H_z^r} & = \int_{I_{z}} ||h||_{H_{x,v}^{s,r}}\, \pi(z)d z, 
\qquad ||h||_{H_{x,v}^{s,r} L_{z}^{\infty}} = \sup_{z\in I_{z}} ||h||_{H_{x,v}^{s,r}}. 
\end{split}
\end{align}

Refer to \cite[Theorem 2.5]{MB15}, we extend its analysis to our 
case of the Boltzmann equation in the acoustic regime. 
Let $h_{\e}$ be the perturbed solution to the linearized equation (\ref{h-PE}). Suppose the initial data for (\ref{h-PE}) and (\ref{Euler}) are consistent for each $z$. 
If the initial distribution $h_{in} \in \text{Null}(\mathcal L)$ and $h_{in}\in H_{x,v}^s$, then for each $z$, $\left(h_{\e} \right)_{\e>0}$ converges strongly to 
$$ h(t,x,v,z) = \left[\rho(t,x,z) + v\cdot u(t,x,z) + \frac{1}{2}(|v|^2 - d_v) T(t,x,z)\right]\mathcal M(v)$$
in $L_{[0,T]}^2 H_x^s L_v^2$ as the Knudsen number $\e\to 0$, where $\rho$, $u$, $T$ (with $E$ obtained by \eqref{Energy}) satisfy the Euler system (\ref{Euler}). 
We adapt our acoustic scaling to \cite[Theorem 2.5]{MB15} and get the follows: For all $z\in I_z$, 
if $h_{in}$ belongs to $H_x^s L_v^2$,
then 
\begin{equation}\label{h_Error}
\sup_{t\in[0,\infty]} || h(t,z) - h_{\e}(t,z)||_{L^2_{x,v}} \leq 
\sup_{t\in[0,\infty]} || h(t,z) - h_{\e}(t,z)||_{H_x^s L_v^2}\leq 
C \max\{ \e, \, \e V_T(\e)\}, 
\end{equation}
where $\forall T>0$, $V_T(\e)$ is defined as
$$V_T(\e)=\sup_{t\in [0, T]}||h(t,z) - h_{\e}(t,z)||_{L_x^{\infty}L_v^2} \to 0, \quad\text{ as   }\e\to 0. $$

{\bf Error splitting}.
Let 
$\left\langle\,\cdot\,\right\rangle^H$ be an inner product space corresponding to the high-fidelity solution and $\left|\left|\,\cdot\,\right|\right|^H$ be the corresponding induced norm, see \cite{NGX14}. For each $z$, 
to study the total error $\left|\left| u^H(z) - u^B(z)\right|\right|^H$, one can split it into two parts: 
\begin{equation}
\label{total-error}
 \left|\left| u^H(z) - u^B(z)\right|\right|^H \leq \left|\left| u^H(z) - P_{U^H(\gamma_N^L)}u^H(z) \right|\right|^H + \left|\left| P_{U^H(\gamma_N^L)}u^H(z) - u^B(z)\right|\right|^H.     
\end{equation}
\cite[Lemma 4.3]{NGX14} shows the estimate for the second term: 
\begin{equation}\label{TermII}
 \left|\left| P_{U^H(\gamma_N^L)}u^H(z) - u^B(z)\right|\right|^H \leq c \left|\left| P_{U^H(\gamma_N^L)}u^H(z)\right|\right|^H + 
\left|\left| \sqrt{{\bf G}^H}({\bf G}^L)^{-1} {\bf Q}\,{\bf f}^L\right|\right|, 
\end{equation}
{\color{black}where we refer $c$ to $\e_1 + \e_2 + \e_1 \e_2$ in \cite[Lemma 4.3]{NGX14}, which is small, based on the reasonable assumptions made there.}
The last term above is related to the non-invertibility of high-fidelity Gramian matrix and usually negligible. Here ${\bf G}^L$ (or ${\bf G}^H$) is the Gramian matrix of $u^L(\gamma)$ (or $u^H(\gamma)$) given by \eqref{GM}, the vector ${\bf f}^L$ has entries
$$ f_n^L = \left\langle u^L({z}_n), u^L(z) \right\rangle^L, $$
and ${\bf Q}:={\bf I} - {\bf P}$ is the orthogonal projection onto its kernel (with ${\bf P}$ the orthogonal projection matrix onto its range), see \cite{NGX14}. 
In addition, since $\left|\left| P_{U^H(\gamma_N^L)}u^H(z)\right|\right|^H \leq ||u^H(z)||^H$, thus 
\begin{equation}
\label{PU}
\left|\left| P_{U^H(\gamma_N^L)}u^H(z) - u^B(z)\right|\right|^H \leq c\, ||u^H(z)||^H + 
\left|\left| \sqrt{{\bf G}^H}({\bf G}^L)^{-1} {\bf Q}\,{\bf f}^L\right|\right|. 
\end{equation} 

{\bf Projection error}. The rest of this section will study the estimate for the projection error (the first term on the right-hand-side of (\ref{total-error})) and conclude it in Theorem \ref{Thm1}. 
We now adapt the analysis in \cite[subsection 4.1]{NGX14} and incorporate our high-fidelity (Boltzmann) and low-fidelity (Euler) models, by utilizing the knowledge of (\ref{h_Error}). 
Denote
\begin{subequations}
\begin{align}
& \label{zH} z_n^H = \arg\max_{z\in I_{z}} d^H(u^H(z), U^H(\gamma_{n-1}^H)), \qquad \gamma_n^H = \gamma_{n-1}^H \cup \{ z_n^H \},  \\[2pt]
&\label{zL} z_n^L = \arg\max_{z\in I_{z}} d^L(u^L(z), U^L(\gamma_{n-1}^L)), \qquad \gamma_n^L = \gamma_{n-1}^L \cup \{ z_n^L \}. 
\end{align}
\end{subequations}
The ``best" achievable distance for approximation from a general $N$-dimensional subspace is the Kolmogorov $N$-width, defined by 
\begin{equation}\label{K-width}
d_N^H (u^H(I_{z})) = \inf_{\text{dim}(V_N)=N} \sup_{v\in u^H(I_{z})} d^H(v, V_N). 
\end{equation}
The following is similar to \cite[Lemma 4.1]{NGX14}, 
except that we use the inequality (\ref{h_Error}). Since it is lengthy, we present as Lemma \ref{LM} with its proof in the Appendix. 

\begin{theorem}
\label{Thm1}
If all the assumptions in Lemma \ref{LM} are satisfied for each $n=0, 1, \cdots, N-1$, then 
\begin{equation}\label{uP} 
\sup_{z\in I_{z}} ||u^H(z) - P_{U^H(\gamma_N^L)}u^H(z)||^H \leq C \sqrt{d_{N/2}(u^H(I_{z}))}\,, 
\end{equation}
where $d_N(u^H(I_{z}))$ is the $N$-width of the functional manifold $u^H(I_{z})$, and the constant 
$C=\sqrt{2}(\delta_1 - \delta_2\,\e)^{-1}$ 
with 
$0< \delta_1 - \delta_2\, \e<1$. 
\end{theorem}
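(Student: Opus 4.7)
The plan is to adapt the greedy-algorithm error estimate in \cite[Lemma 4.1]{NGX14} to the present cross-fidelity setting, where the points $\gamma_N^L$ are selected by the low-fidelity Euler solver via (\ref{zL}) but the projection error is measured in the high-fidelity Boltzmann norm. The whole argument leans on Lemma \ref{LM}, which I will treat as a black box: it should provide, for every $n=0,\dots,N-1$, a step-by-step comparison between the high-fidelity distance $d^H(u^H(z), U^H(\gamma_n^L))$ and the corresponding low-fidelity distance, perturbed by a term of order $\e$ coming from (\ref{h_Error}). Writing
\[
e_n^H(z) := \bigl\|u^H(z)-P_{U^H(\gamma_n^L)}u^H(z)\bigr\|^H,
\]
the first step is to observe that $e_n^H$ is monotone non-increasing in $n$, and that Lemma \ref{LM} bounds
$\sup_{z\in I_z} e_n^H(z)$ in terms of the Kolmogorov $n$-width $d_n^H(u^H(I_z))$ up to the multiplicative factor $(\delta_1-\delta_2\e)^{-1}$. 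The smallness hypothesis $0<\delta_1-\delta_2\e<1$ guarantees the constant is finite and positive.

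Next, I would use the now-standard Binev--DeVore-type splitting: decompose the indices $1,\dots,N$ into two halves $\{1,\dots,N/2\}$ and $\{N/2+1,\dots,N\}$. On the first half, Lemma \ref{LM} gives us a distance bound against the $(N/2)$-width, while on the second half we exploit the monotonicity $e_N^H(z)\le e_{N/2}^H(z)$ together with the fact that, by construction of the greedy procedure, the selected low-fidelity distances cannot be much larger than the high-fidelity distances along the trajectory. Summing the two contributions and taking a square root produces the claimed rate $\sqrt{d_{N/2}(u^H(I_z))}$ and the explicit constant $\sqrt{2}(\delta_1-\delta_2\e)^{-1}$. The factor $\sqrt{2}$ is precisely the penalty from the half-splitting and matches the analogous constant in \cite[Lemma 4.1]{NGX14}.

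The main obstacle is controlling how the low-fidelity greedy choice (\ref{zL}) ``sees'' the geometry of the high-fidelity manifold $u^H(I_z)$. Because the maximizer $z_n^L$ is optimal for $d^L$ rather than $d^H$, one must argue that it remains $O(\e)$-optimal for $d^H$; this is where (\ref{h_Error}) is inserted, and it is exactly the role that Lemma \ref{LM} plays in packaging the perturbation cleanly. If the Boltzmann--Euler closeness $\|h(t,z)-h_\e(t,z)\|_{H_x^s L_v^2}\lesssim \e$ failed to hold uniformly in $z$, or if the constant in (\ref{h_Error}) grew with $n$, the two-halves argument would not close; the smallness requirement $\delta_1-\delta_2\e<1$ is precisely what prevents this degeneration.

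In summary, modulo Lemma \ref{LM}, the proof of Theorem \ref{Thm1} reduces to invoking that lemma at each index $n\le N-1$, applying the Binev--DeVore splitting of $\{1,\dots,N\}$, and collecting constants. No further analytic input on the Boltzmann equation is needed at this stage, since all the kinetic-to-fluid information has already been absorbed into (\ref{h_Error}) and hence into the hypotheses of Lemma \ref{LM}. I would present the argument in the order: (i) set up $e_n^H(z)$ and record monotonicity; (ii) invoke Lemma \ref{LM} to obtain the per-step contraction in the $d^H$-distance with Kolmogorov-width remainder; (iii) perform the half-index split; (iv) take the supremum over $z\in I_z$ and extract the explicit constant.
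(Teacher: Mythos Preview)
Your overall plan matches the paper's: both rely on Lemma~\ref{LM} as the only analytic input and then pass to a weak-greedy convergence result. The paper's proof is in fact just two lines --- it records that Lemma~\ref{LM} yields the weak-greedy inequality
\[
d^H\bigl(u^H(z_n^L),\, U^H(\gamma_{n-1}^L)\bigr)\;\ge\;(\delta_1-\delta_2\,\e)\,\sup_{z\in I_z} d^H\bigl(u^H(z),\,U^H(\gamma_{n-1}^L)\bigr)
\]
for every $n=0,\dots,N-1$, and then cites \cite[Corollary~3.3]{DPW13} directly to obtain (\ref{uP}) with $C=\sqrt{2}(\delta_1-\delta_2\,\e)^{-1}$. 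There is no separate ``half-index'' argument in the paper; that work is entirely outsourced to DPW13.

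You have one mischaracterization worth fixing before writing anything up. Lemma~\ref{LM} does \emph{not} ``bound $\sup_z e_n^H(z)$ in terms of the Kolmogorov $n$-width,'' nor does it produce a ``Kolmogorov-width remainder.'' It says only that the low-fidelity greedy point $z_n^L$ is $(\delta_1-\delta_2\,\e)$-near-optimal for the \emph{high-fidelity} distance --- exactly the weak-greedy hypothesis and nothing more. The Kolmogorov width enters only \emph{afterwards}, through the DPW13 machinery, not as part of Lemma~\ref{LM}. Your sketch of the half-splitting (``on the first half Lemma~\ref{LM} gives a distance bound against the $(N/2)$-width \ldots\ summing the two contributions and taking a square root'') does not reflect how that argument actually works: the $\sqrt{d_{N/2}}$ rate comes from a Hilbert-space matrix inequality comparing products of greedy errors to products of widths, not from bounding each half separately. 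If you wish to reproduce DPW13 rather than cite it, you will need that step; otherwise, simply invoking \cite[Corollary~3.3]{DPW13} after stating the weak-greedy property is both correct and what the paper does.
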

\begin{proof}
Lemma \ref{LM} shows that it is a weak greedy procedure to use the nodal choices of $z_n^L$: 
$\exists\, 0< \delta_1 - \delta_2\,\e <1$ 
such that for all $n=0, 1, \cdots, N-1$, 
$$ d^H (u^H(z_n^L), U^H(\gamma_{n-1}^L)) \geq (\delta_1 - \delta_2\,\e)\sup_{z\in I_{z}}\, d^H(u^H(z), U^H(\gamma_{n-1}^L)), $$ 
and \cite[Corollary 3.3]{DPW13} indicates that (\ref{uP}) holds with 
$C=\sqrt{2}(\delta_1 - \delta_2\,\e)^{-1}$. 
\end{proof}

\subsection{Smoothness of the solution and convergence analysis} 
\label{sec:p2}

In this section, we first study the smoothness of the high-fidelity solution 
$u^H: I_z \mapsto V^H$, where $z$ is a multivariate random parameter and 
$V^H$ is a Hilbert space with an inner product $\left\langle \,\cdot,\cdot \right\rangle^H$. 
Then we shall establish an estimate bound for the Kolmogorov width in Theorem \ref{Thm1}, finally combine all the arguments and show the convergence result for our bi-fidelity procedure in Theorem \ref{MainThm}.  

We introduce the standard multivariate notation. Denote the countable set of ``finitely supported" sequences of nonnegative integers by
$$ \mathcal F:= \left\{ \nu = (\nu_1, \nu_2, \cdots ): \nu_j \in \mathbb N, \, \text{ and } \nu_j \neq 0 \text {  for only a finite number of } j\right\}, $$
with $|\nu|:=\sum_{j \geq 1} |\nu_j|$. 
For $\nu \in \mathcal F$ supported in $\{1, \cdots, J \}$, one defines the partial derivative in $z$
\begin{equation}
\partial^{\nu} u = \frac{\partial^{|\nu|} u}{\partial^{\nu_1}z_1  \cdots \partial^{\nu_J}z_J},   
\end{equation} 
and the multi-factorial $\nu! := \prod_{j \geq 1}\nu_j !$, where $0! :=0$.  

{\bf Regularity of $u^H$.}
It is not hard to extend the sensitivity analysis in \cite{LJ-UQ} to the multi-dimensional random variable case, see 
Remark 2.8 in \cite{SJ-TwoPhase} on the extension. \cite{LJ-UQ} tells us that 1) the uncertainties from the initial data and collision kernel (under suitable assumptions) will eventually diminish and the solution will exponentially decay in time to the deterministic global equilibrium $\mathcal M$; 2) the regularity of the initial data in the random space is preserved at later time. 

Let $h_{\e}$ be a perturbed solution to the equation \eqref{h-PE}. 
If its random initial data satisfies {\color{black}for all $r$},
\begin{equation}\label{h-IC} 
||h_{\e}^{in}(z)||_{H_{x,v}^{s,r} L_{z}^{\infty}} \leq C_I, \end{equation}
then at time $t>0$, 
$$ ||h_{\e}(t,z)||_{H_{x,v}^{s,r} L_{z}^{\infty}} \leq {\color{black}C_r}\, e^{-\e \tau t},
 $$
{\color{black}Moreover, $h$ is {\it analytic} in the random space, meaning that}
\begin{equation}\label{h-Z}
||\partial^{\nu} h_{\e}(t,z)||_{H_{x,v}^s L_{z}^{\infty}} \leq
{\color{black} C\, e^{- \e \tau t} (|\nu|!), \qquad \text{for any } \nu \geq 0. }
\end{equation}
{\color{black}See Appendix B for a discussion on the linearized Boltzmann case.}

We now use a weaker version by letting $s=1$ in \eqref{h-IC} and \eqref{h-Z}. 
If the initial data
$||h_{\e}^{in}(z)||_{H_{x,v}^{1,r} L_z^{\infty}} \leq C_I$, then
$$||\partial^{\nu} h_{\e}(t,z)||_{L^2_{x,v} L_z^{\infty}} \leq ||\partial^{\nu} h_{\e}(t,z)||_{H_{x,v}^1 L_z^{\infty}} 
\leq C\, e^{-\e \widetilde\tau t}. $$ 
By the definition of $u$ containing perturbed macroscopic quantities (see \cite[section 2.2.4]{MB15}) by the Cauchy-Schwarz inequality, 
one easily gets
\begin{equation} 
\label{u-Anal}
||\partial^{\nu} u(t,z)||_{L^2_x L_z^{\infty}} \lesssim ||\partial^{\nu} h_{\e}(t,z)||_{L^2_{x,v} L_z^{\infty}} \leq  C\, e^{- \e\tau t},
\end{equation}
for $|\nu|\leq r$, where $C$ and $\tau>0$ are all  generic constants independent of $\e$. 
We assume that the high-fidelity solution $u^H$ follows a similar behavior as the analytic solution $u$ (computed from $h_{\e}$) to the Boltzmann equation, with an error that depends on the numerical scheme used in the high-fidelity model. 
If the initial distribution of the high-fidelity model satisfies 
\begin{equation}
\label{u-IC}
 ||h_{\e}^{in}(z)||_{H_{x,v}^{1,r} L_z^{\infty}} \leq C_I,
\end{equation}
then for a fixed time $t>0$ and $|\nu|\leq r$, 
\begin{equation}\label{Anal}
 \sup_{z\in I_z}||\partial^{\nu} u^H(t,z)||^H \leq C^{\prime}\, e^{- \e \tau t} + \xi, 
\end{equation}
where $\xi$ depends on the order and discretization parameters $\Delta t$, $\Delta x$, $\Delta v$ used in the high-fidelity solver.
Thus for all $z\in I_z$, one gets 
$$ ||\partial^{\nu} u^H(t,z)||^H \leq C^{\prime}\, e^{- \e \tau t} + \xi. $$
Note that $C_I$, $C$, $C^{\prime}$ and $\tau$ in the inequalities \eqref{h-IC}--\eqref{Anal} 
are all positive generic constants independent of $\e$. 
\\[2pt]

We make the following assumption on the random collision kernel: 
\begin{assumption}
\label{assp1}
Assume the collision kernel take the form
\begin{equation}\label{RB1}B(|v-v^{\ast}|, \cos\theta, z) = \Phi(|v-v^{\ast}|) b(\cos\theta, z), \, \Phi(|v-v^{\ast}|) = C |v-v^{\ast}|^m, \, m \in [0,1], \end{equation}
and $\left(\psi_j \right)_{j\geq 1}$ be an affine representer (see definition in \cite{Cohen15}) of the cross section $b$, that is, 
\begin{equation}
\label{Assump-b}
  b(\eta, z) = \bar b(\eta) + \sum_{j \geq 1}z_j \psi_j(\eta), \qquad z :=\left( z_j \right)_{j \geq 1}, \quad \eta = \cos\theta,    
\end{equation}
where the sequence $\left( ||\psi_j||_{L^{\infty}(\eta)}\right)_{j\geq 1}\in \ell^p$ for $0<p<1$ (see \cite{Cohen15}). 
One also assumes that 
\begin{equation}\label{RB2}|b(\eta, z)| \leq C_0, \quad |\partial_{\eta}b(\eta, z)| \leq C_1, \quad
|\partial^{\nu}b(\eta, z)| \leq C_2, 
\end{equation}
for all $\eta\in[-1,1]$, $|\nu|\leq r$. Here $C$, $C_0$, $C_1$, $C_2$ are all positive constants. 
\end{assumption}
\noindent
The above (\ref{RB1})--(\ref{RB2}) extend the conditions in \cite[Theorem 4.4 (ii)]{LJ-UQ} from 
one-dimensional random space to the multi-dimensional random space. 
\hspace{2cm}

{\bf An upper bound for $N$-width.}
We can now utilize the result in \cite[Corollary 3.11]{Cohen15}, which works for general parametric PDEs. 
Define the norm 
$$ || u - v ||_{L^{\infty}(I_z, V)} := \sup_{z\in I_z}||u(z)- v(z)||_{V},$$
where $V$ is the physical space considered, in our case 
$V=L^2_x$.   
Under {\bf Assumption \ref{assp1}}, by the analyticity of $u^H$ given in (\ref{Anal}), for $z\in I_z$, one obtains
\begin{equation}
\label{Col}
\left| \left| u^H - \sum_{\nu\in\Lambda_N}w_{\nu}P_{\nu}\right|\right|_{L^{\infty}(I_z,V)} \leq C (N+1)^{-q}, \qquad q=\frac{1}{p}-1, 
\end{equation}
where $\left(P_k\right)_{k \geq 0}$ is the sequence of renormalized Legendre polynomials on $[-1,1]$, $\Lambda_N$ is the set of indices that corresponds to the $N$ largest $||w_{\nu}||_V$, and the constant 
$C := \left|\left|\left(||w_{\nu}||_V\right)_{\nu\in\mathcal F}\right|\right|_{\ell^p} <\infty$. By (\ref{Anal}), one {\color{black}formally gets} $C= c\, e^{-\e\tau t} + \xi$. 

Recall (\ref{K-width}) and the definition $d^H(v,V_N) :=\min_{w\in V_N}||v-w||_V$ (refer to \cite[equation (8.3)]{Cohen15}), 
the best achievable error in $L^{\infty}(I_z, V)$ is described by the $N$-width of the solution manifold $\mathcal M=u^H(I_z)$:
\begin{equation}
 d_N(\mathcal M)_V = \inf_{\text{dim}(V_N)=N}\sup_{v\in\mathcal M}\min_{w \in V_N} ||v - w||_V.    
\end{equation} 
One can use the polynomial approximation bound in $L^{\infty}(I_z,V)$ to estimate an upper bound for $d_N(\mathcal M)_V$, 
as studied in \cite[Section 4]{Cohen15}. 
Let the $N$-dimensional subspace $$ V_N := \text{span}\left\{ c_{\nu} : \nu\in\Lambda_N\right\}. $$ 
For $N\geq 1$, one observes that 
\begin{align}
\label{dN}
\begin{split}
\displaystyle d_N(\mathcal M)_V &\leq \sup_{v\in\mathcal M}\min_{w\in V_N} ||v-w||_V = \sup_{z\in I_z}\min_{w\in V_N} ||u^H(z) - w||_V \\[2pt]
\displaystyle & \leq ||u^H -\sum_{\nu\in\Lambda_N} w_{\nu}P_{\nu}||_{L^{\infty}(I_z,V)} \leq C (N+1)^{-q}, \quad q = \frac{1}{p}-1. 
\end{split}
\end{align}
where $\sum_{\nu\in\Lambda_N} w_{\nu}P_{\nu}$ is the truncated Legendre expansion and (\ref{Col}) is used in the last inequality. 
\\[2pt]

We now conclude with our main result on convergence analysis: 
\begin{theorem}
\label{MainThm}
If the assumptions for the random initial data, random collision kernel, namely (\ref{u-IC}) and {\bf Assumption \ref{assp1}} are satisfied, for fixed time $t>0$ and fixed numerical discretization parameters $\Delta t$, $\Delta x$ and $\Delta v$, then 
for all $z\in I_z$,
\begin{equation}
\label{MainError}
\left|\left|u^H(t,z)- u^B(t,z)\right|\right|^H \leq C_1\, \frac{e^{-\frac{\e \tau t}{2}}}{(N/2+1)^{q/2}} + C_2\, e^{-\e\tau t} + \chi
+ \left|\left| \sqrt{{\bf G}^H}({\bf G}^L)^{-1} 
{\bf Q}\,{\bf f}^L(z)\right|\right|,   
\end{equation} 
where $N$ is the size of the subspace $\gamma_N$ in Algorithm 1, {\color{black} and} $q$ is given in (\ref{Col}) with $p$ depending on 
the $\ell^p$-summability assumption of $(\psi_j)_{j \geq 1}$, 
$C_1 = \mathcal{O} \left(\frac{1}{\delta_1 - \delta_2\e}\right)$, 
$C_2$ and $\tau$ are constants that depend on the initial data $u^{in}$ and {\bf Assumption \ref{assp1}} on the collision kernel. $\delta_1$, $\delta_2$ are all sufficiently small with 
$0<\delta_1 - \delta_2\,\e<1$. Definitions of ${\bf G}^L$, ${\bf G}^H$, ${\bf Q}$ and $f^L$ are given below \eqref{TermII}. 
$\chi$ is associated to the order and discretization mesh in the high-fidelity solver.  
$\e$ is the Knudsen number in the Boltzmann equation (\ref{Boltz}). 
\end{theorem}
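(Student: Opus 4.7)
The plan is to start from the error decomposition in \eqref{total-error} and bound the projection term and the ``remainder'' term separately, then collect constants. The two ingredients already in place are Theorem \ref{Thm1}, which reduces the projection error to a Kolmogorov $N/2$-width of $u^H(I_z)$, and the regularity estimate \eqref{Anal}, which controls $\|\partial^\nu u^H\|^H$ uniformly in $z$ by $C' e^{-\varepsilon \tau t} + \xi$.

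First, I would apply Theorem \ref{Thm1} to obtain
$\sup_{z \in I_z}\|u^H(z) - P_{U^H(\gamma_N^L)} u^H(z)\|^H \leq \sqrt{2}(\delta_1 - \delta_2 \varepsilon)^{-1} \sqrt{d_{N/2}(u^H(I_z))}$.
Next, I would invoke the $N$-width bound \eqref{dN}, with the crucial observation that the constant $C$ appearing there is exactly the $\ell^p$-norm of $(\|w_\nu\|_V)_{\nu \in \mathcal{F}}$, and by the analyticity estimate \eqref{Anal} combined with Assumption \ref{assp1} (which is what justified \eqref{Col} in the first place), this constant admits the form $c\, e^{-\varepsilon \tau t} + \xi$. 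Substituting gives
$\sqrt{d_{N/2}(u^H(I_z))} \leq \sqrt{c\, e^{-\varepsilon \tau t} + \xi}\,(N/2 + 1)^{-q/2}$,
and treating $\xi$ as absorbed into the discretization error term $\chi$, the surviving exponential factor is $e^{-\varepsilon \tau t / 2}$. This yields the first term $C_1\, e^{-\varepsilon \tau t /2}(N/2+1)^{-q/2}$ with $C_1 = \mathcal{O}((\delta_1 - \delta_2 \varepsilon)^{-1})$.

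For the second summand in \eqref{total-error}, I would directly use \eqref{PU}: the contribution is bounded by $c\,\|u^H(z)\|^H + \|\sqrt{\mathbf{G}^H}(\mathbf{G}^L)^{-1}\mathbf{Q}\,\mathbf{f}^L\|$. The norm $\|u^H(z)\|^H$ is controlled by the $\nu = 0$ case of \eqref{Anal}, giving $C'\, e^{-\varepsilon \tau t} + \xi$; this becomes the second term $C_2\, e^{-\varepsilon \tau t}$ after absorbing $\xi$ into $\chi$, while the Gramian term is kept explicitly since it reflects conditioning of the low-fidelity basis and is not small by any a priori argument we have. Summing the two contributions gives \eqref{MainError}.

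The main obstacle, in my view, is not any single estimate but rather the careful bookkeeping of where the exponential factor $e^{-\varepsilon \tau t}$ comes in and how it propagates through the square root from the $N$-width bound; in particular, one must be sure that the multiplicative constant in \eqref{dN} indeed inherits the temporal decay from the $\ell^p$-norm of Legendre coefficients (which follows from \eqref{Anal} but is not completely explicit in \cite{Cohen15}), and that the $\xi$-type numerical errors from the AP high-fidelity solver can be consistently lumped into a single term $\chi$ without confusing the roles of $C_1$, $C_2$, and the Gramian residual. The remaining steps are routine bookkeeping of constants and do not require new estimates beyond those already cited.
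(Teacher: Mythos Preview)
Your proposal is correct and follows essentially the same route as the paper: split via \eqref{total-error}, bound the projection term through Theorem~\ref{Thm1} and the $N$-width estimate \eqref{dN} with constant $c\,e^{-\varepsilon\tau t}+\xi$, bound the remainder via \eqref{PU} and the $\nu=0$ case of \eqref{Anal}, then absorb the $\xi$-contributions into $\chi$. The paper's proof is precisely this chain of inequalities, with $\chi = C_1\sqrt{\xi}+\xi$ recorded explicitly.
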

\begin{proof}
According to the inequalities (\ref{total-error}), (\ref{PU}), (\ref{dN}), (\ref{Anal}) and Theorem \ref{Thm1}, one gets 
for all $z\in I_z$,
\begin{align*}
\displaystyle \left|\left| u^H(t,z) - u^B(t,z)\right|\right|^H & \leq C \sqrt{d_{N/2}(u^H(I_{z}))} +  c \, ||u^H(z)||^H + \left|\left| \sqrt{{\bf G}^H}({\bf G}^L)^{-1} {\bf Q}\,{\bf f}^L(z)\right|\right| \\[4pt]
\displaystyle & \leq  C_1 \, 
\frac{\sqrt{e^{-\e\tau t} + \xi}}{(N/2+1)^{q/2}} + C_2\,e^{-\e\tau t} + \xi + \left|\left| \sqrt{{\bf G}^H}({\bf G}^L)^{-1} {\bf Q}\,{\bf f}^L(z)\right|\right| \\[4pt]
\displaystyle &  \leq C_1 \frac{e^{-\frac{\e \tau t}{2}}}{(N/2+1)^{q/2}} + C_2\,e^{-\e\tau t} + \chi + \left|\left| \sqrt{{\bf G}^H}({\bf G}^L)^{-1} {\bf Q}\,{\bf f}^L(z)\right|\right|, 
\end{align*}
here $C$, $C_1$, $C_2$, $\tau$ are all generic positive constants independent of 
$\e$, {\color{black}$C_2= c\, C^{\prime}$ which is small}, and $\chi=C_1\sqrt{\xi}+\xi$. 
\end{proof}

Theorem \ref{MainThm} indicates that the error between the bi- and high-fidelity solutions decays algebraically with respect to the number of high-fidelity runs $N$. 
The convergence rate $q/2$ is independent of the dimension of the random space and the regularity of the initial data; it only relates to the $\ell^p$ summability of the
affine representer $(\psi_j)_{j \geq 1}$ 
in {\bf Assumption 1}. 
\begin{remark}
We make the following remarks: 
 \begin{itemize}
\item[1.] 
The estimate in Theorem \ref{MainThm} may not be sharp. Deriving a sharper estimate requires a better understanding on the role of the Knudsen number $\e$ in the accuracy analysis. 
\item[2.] 
It is not our goal of the current work to establish stability and error analysis for the deterministic AP method for the multiscale Boltzmann
equation in \cite{FJ}, which is difficult due to the penalization used in the scheme thus has not been studied to our best knowledge. Thus 
deriving \eqref{Anal} from \eqref{u-Anal} rigorously remains challenging. 
\end{itemize}
We hope to report more results regarding the above two issues in future researches. 
\end{remark}

\section{Numerical Tests}
\label{sec:4}

To examine the performance of the method, we shall compute numerical errors in the following way: we choose a fixed set of points $\{ \hat z_i \}_{i=1}^n \subset I_z$ that is independent of the point sets $\Gamma$, 
and evaluate the following error between the bi-fidelity and high fidelity solutions at a final time $t$: 
 \begin{equation}\label{Error} ||u^H(t) - u^B(t)||_{L^2(D \times I_z)} 
 \approx\frac{1}{n}\sum_{i=1}^n ||u^H(\hat z_i, t) - u^B(\hat z_i, t)||_{L^2(D)}, 
 \end{equation}
 where $||\cdot||_{L^2(D)}$ is the $L^2$ norm in the physical domain $D=\Omega\times\mathbb R^2$. The error can be considered as an approximation 
 to the average $L^2$ error in the whole space of $D\times I_z$. 

Since our goal is to numerically solve the multiscale Boltzmann equation with random inputs, we solve the Boltzmann equation (\ref{Boltz}) as the high-fidelity AP solver discussed in Section~\ref{HLmodel}.
 We assume the random collision kernel in the form of 
 \begin{equation}
 \label{B-Form} B(v, v_{\ast}, \sigma, z) = b(z)|v-v_{\ast}|^{\lambda}, 
\end{equation}
and consider Maxwell molecules, i.e., $\lambda=0$ in \eqref{B-Form}. The low-fidelity model is chosen as the Euler equation solved by 
the forward Euler in time and second-order MUSCL scheme in space, by using the same spatial and temporal resolutions as the high-fidelity model, but with a different number of quadrature points $N_v^l$ in velocity discretization.

In all the examples, the spatial domain is chosen to be  $[0,1]$ with $N_x$ grid points, and periodic boundary condition is assumed except for the shock tube tests. 
The velocity domain is chosen as $[-L_v, L_v]^2$ with $L_v=8.4$ and $N_v$ grid points in each dimension. 
Without loss of generality, {\color{black}the} $d$-dimensional random variable ${\bf z}$ 
is assumed to follow the uniform distribution on $[-1,1]^d$. 
The training set $\Gamma$ is chosen to be $M=1000$ random samples of ${\bf z}$. 
We examine the error of bi-fidelity approximation with respect to the number of high-fidelity runs by computing the norm defined in (\ref{Error}) 
(evaluated over an independent set of $n=1000$ Monte Carlo samples). {\color{black} All numerical experiments are conducted by  MATLAB R2018b on macOS Mojave system with 2.4 GHz Intel Core i5 processor and 8GB DDR3 memory}),

\subsection{A double-peak initial data test}
\label{Test1}
We first consider the following initial data to mimic the Karhunen-Loeve expansion of the random field: 
\begin{align}
\label{IC}
\left\{
\begin{array}{l}
\displaystyle \rho_0(x, {\bf z}^{\rho})= \frac{1}{3}\left( 2 + \sin(2\pi x) + 0.2\sum_{k=1}^{d_1} \sin[2\pi(k+1)x]\, \frac{z_k^{\rho}}{2 k}\right), \\[10pt]
\displaystyle {\bf u}_0 = (0.2, 0), \\[10pt]
\displaystyle T_0(x, {\bf z}^T) = \frac{1}{4}\left( 3 + \cos(2\pi x) + 0.2\sum_{k=1}^{d_1} \cos[2\pi(k+1)x]\, \frac{z_k^T}{2 k}\right), \\[12pt]
\displaystyle f_0(x, {\bf v}, {\bf z}) = \frac{\rho_0}{4\pi T_0}\left( \exp(-\frac{|\bf{v} - \bf{u}_0|^2}{2 T_0}) + \exp(-\frac{|\bf{v} + \bf{u}_0|^2}{2 T_0}) \right). 
\end{array}\right.
\end{align}
The uncertain collisional cross section is given by 
\begin{equation}\label{R-K}
b(z)=1+0.5z_1^b, 
\end{equation}
Here ${\bf z}^{\rho}=\left(z_1^{\rho}, \cdots, z_{d_1}^{\rho}\right)$, ${\bf z}^T=\left(z_1^T, \cdots, z_{d_1}^T\right)$, and ${\bf z}^b = z_1^b$ represent the random variables in the collision kernel, initial density and temperature. Let the initial distribution $f_0$ follow a double-peak non-equilibrium initial data \cite{FJ}. 
Set $d_1=7$, thus this is a $d=15$ dimensional problem in the random space. 
We use the Boltzmann equation as the high-fidelity model and the Euler system as the low-fidelity model, set $\Delta x=0.01$, $\Delta t = 8\times 10^{-4}$ 
(in both the high- and low-fidelity models), $N_v^h=16$, and the final time $t=0.1$. 
 
In Figure \ref{Fig1}, we consider the fluid regime with $\e=10^{-4}$. 
This figure shows the mean $L^2$ errors of $\rho$, $u_1$, $T$ 
 between the high- and bi-fidelity solutions with different quadrature points in velocity space. Here $\bf{u}_1$ in the figures below stands for the first component of the two-dimensional bulk velocity ${\bf u}$. It is clear that the error decays fast with the number of high-fidelity runs. In addition, when $N_v^l$ increases, the error between the high- and bi-fidelity solutions decreases. This is expected because the Euler equation solved by more quadrature points in velocity space can capture more information about the high-fidelity model.  
   
In Figure \ref{Fig3}, fluid regime is considered and we vary $\e$  from $\e=10^{-2}$ to $\e=10^{-4}$. The Euler equation is chosen as the low-fidelity model, solved by the
same forward Euler in time and second-order MUSCL scheme in space, and the same spatial and temporal meshes as the Boltzmann equation in the high-fidelity model, 
and with $N_v^l=8$ velocity quadrature points. 
One observes that the smaller $\e$ is, the lower level the errors saturate. This is expected, because when the Knudsen number $\e$ approaches to zero, the Euler equation as the low-fidelity model commits less modeling error and can capture more information of the high-fidelity model. 

In Figure \ref{Fig2}, we investigate the performance of the bi-fidelity approximation for the kinetic regime with $\e=1$. 
Fast convergence of the mean $L^2$ errors with respect to the number of high-fidelity runs is observed. 
Even though $\e$ is relatively large compared to the previous two tests, a satisfactory accuracy in characterizing the behaviors of the solution in the random space is achieved in both cases: $N_v^l=8$ and  $N_v^l=4$; and the errors with $N_v^l=8$ is smaller than that of $N_v^l=4$. 
On the right column of Figure \ref{Fig2}, we plot the high-, low- and the corresponding bi-fidelity solutions (with $r=20$, $N_v^l=8$) for a particular sample point $z$. One observes that the high- and bi-fidelity solutions match quite well, whereas the low-fidelity solutions appear inaccurate at some spatial points. 
This example seems to indicate that although
in the kinetic regime, the fluid description 
breaks down  in the {\it physical space}, 
the bi-fidelity solution can still capture important variations of the high-fidelity model (Boltzman equation) in the {\it random space}. 
\begin{figure}[H]
\centering
    \begin{subfigure}[b]{1\textwidth}
    \centering
\includegraphics[width=0.55\linewidth]{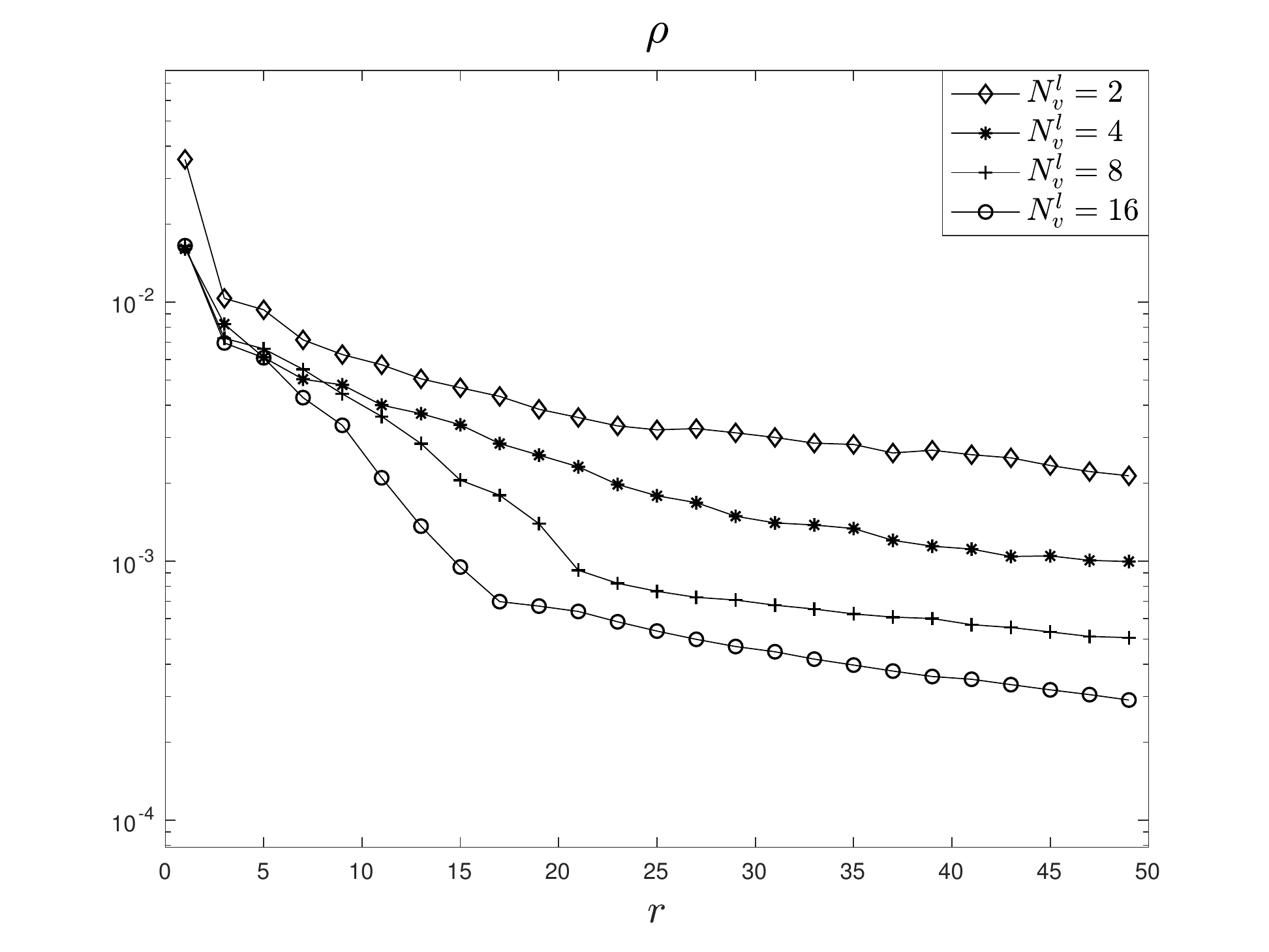}
\caption{}
\end{subfigure}
\begin{subfigure}[b]{1\textwidth}
    \centering
\includegraphics[width=0.55\linewidth]{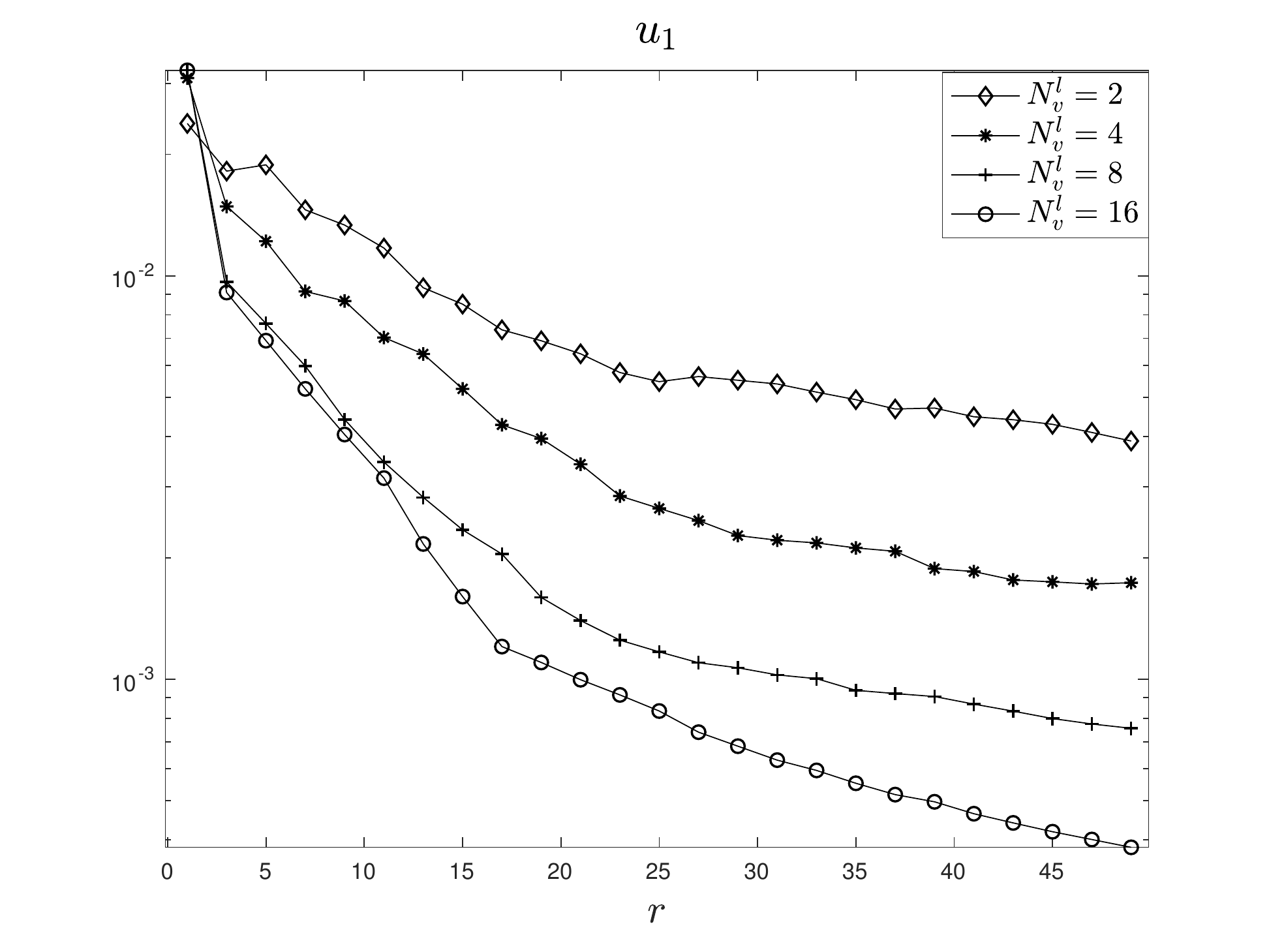}
\caption{}
\end{subfigure}
\begin{subfigure}[b]{1\textwidth}
    \centering
\includegraphics[width=0.55\linewidth]{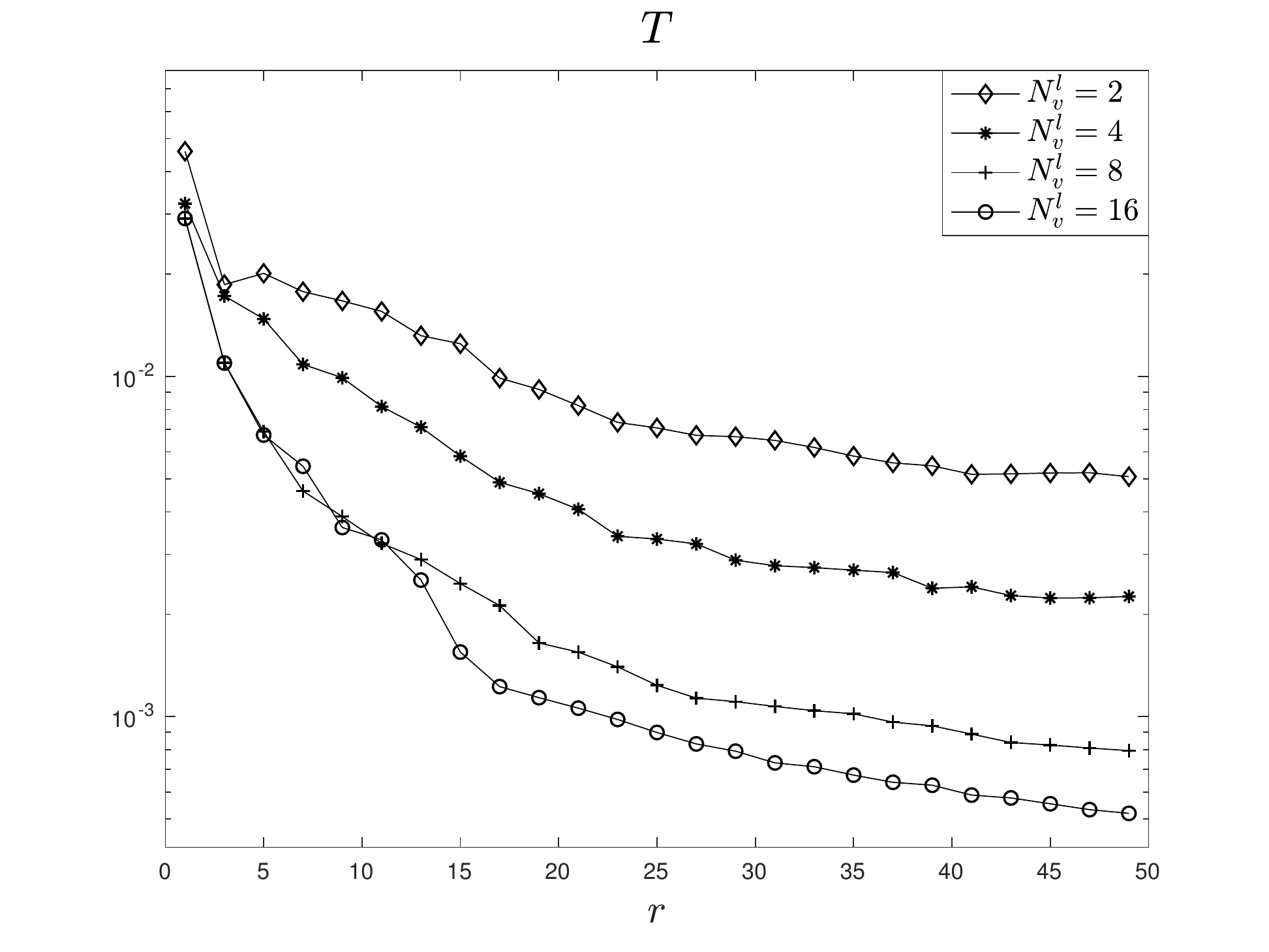}
\caption{}
\end{subfigure}
\caption{The mean $L^2$ error of the bi-fidelity approximation of $\rho$, $u_1$, $T$  with respect to the number of high-fidelity simulation runs, based on the low-fidelity model with different $N_v^l$. }
\label{Fig1}
\end{figure}

\begin{figure}[H]
\centering
\includegraphics[width=0.55\linewidth]{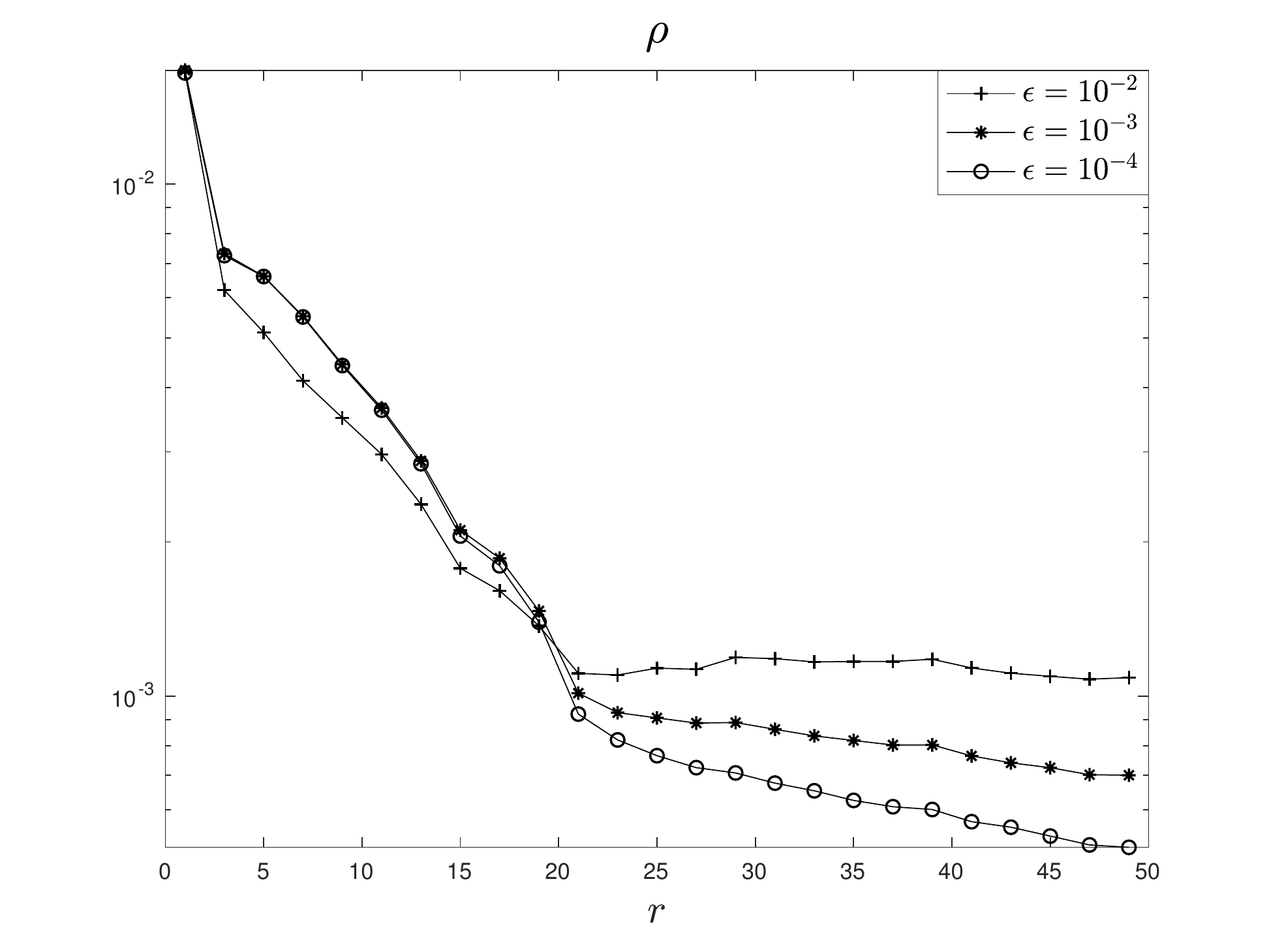}
\includegraphics[width=0.55\linewidth]{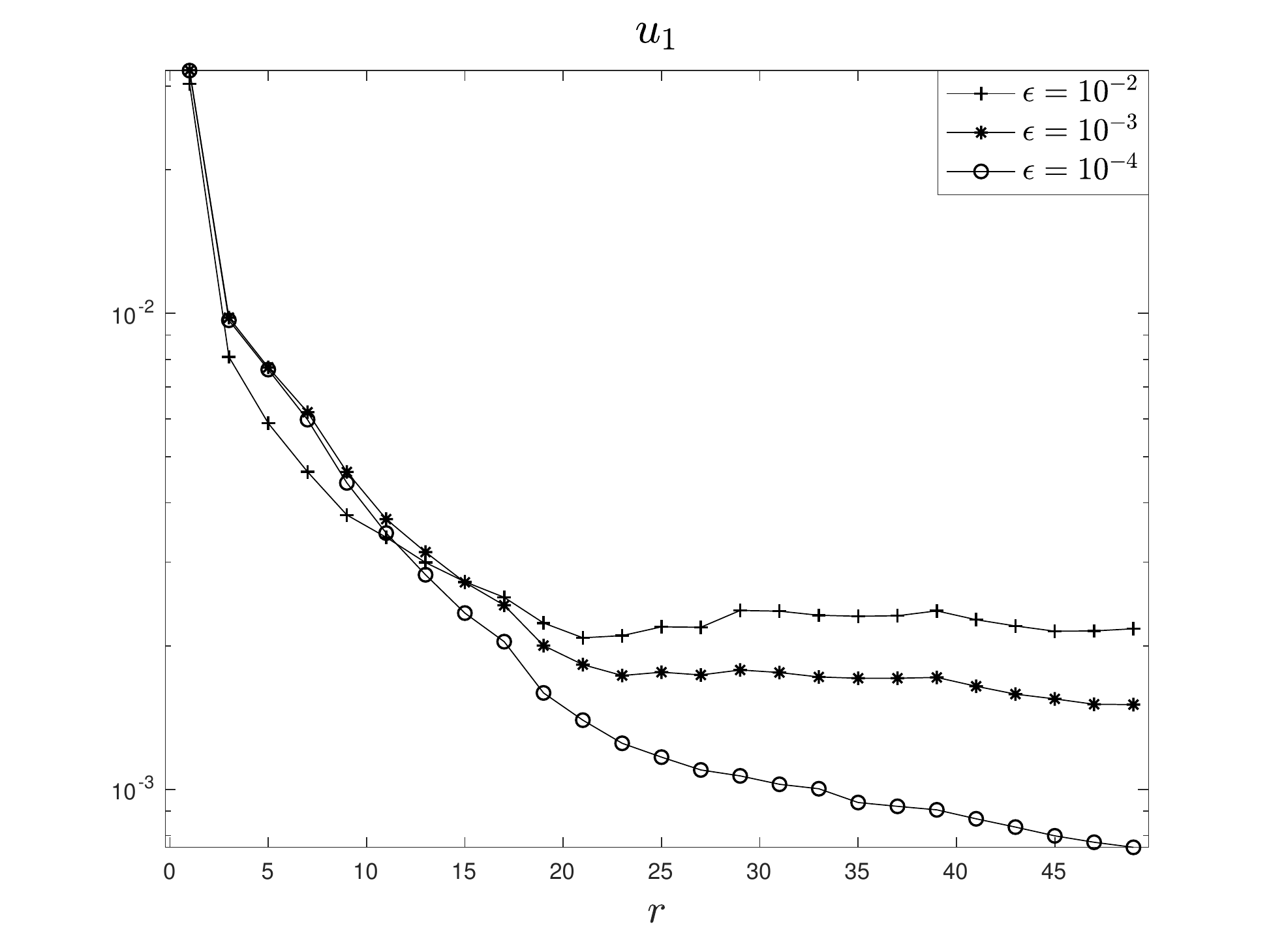}
\includegraphics[width=0.55\linewidth]{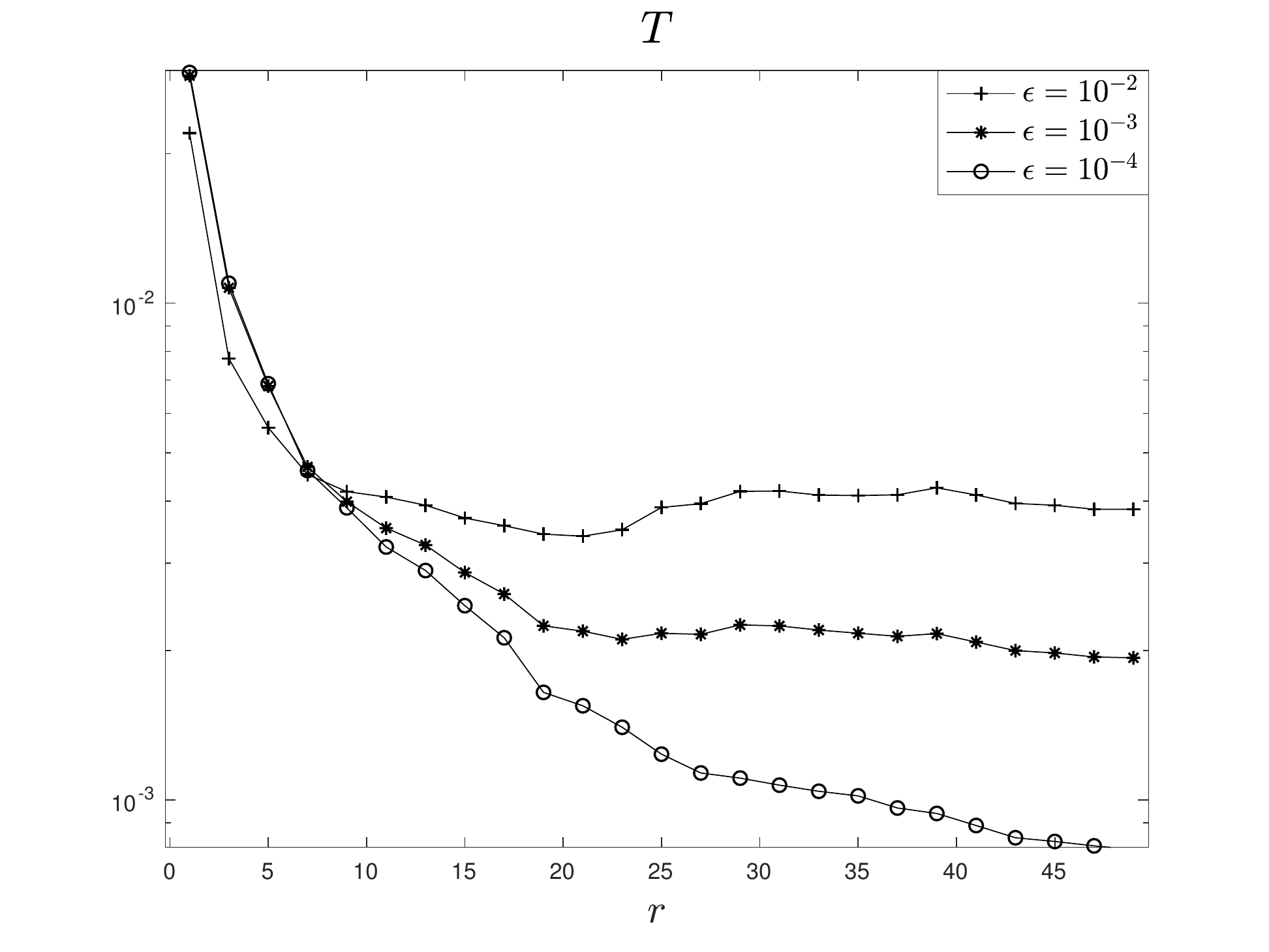}
\caption{The mean $L^2$ error of the bi-fidelity approximation of $\rho$, $u_1$, $T$  with respect to the number of high-fidelity simulation runs, based on the low-fidelity model with $N_v^l=8$ for different $\e$. 
}
\label{Fig3}
\end{figure}

\begin{figure}[H]
\centering
\includegraphics[width=0.45\linewidth]{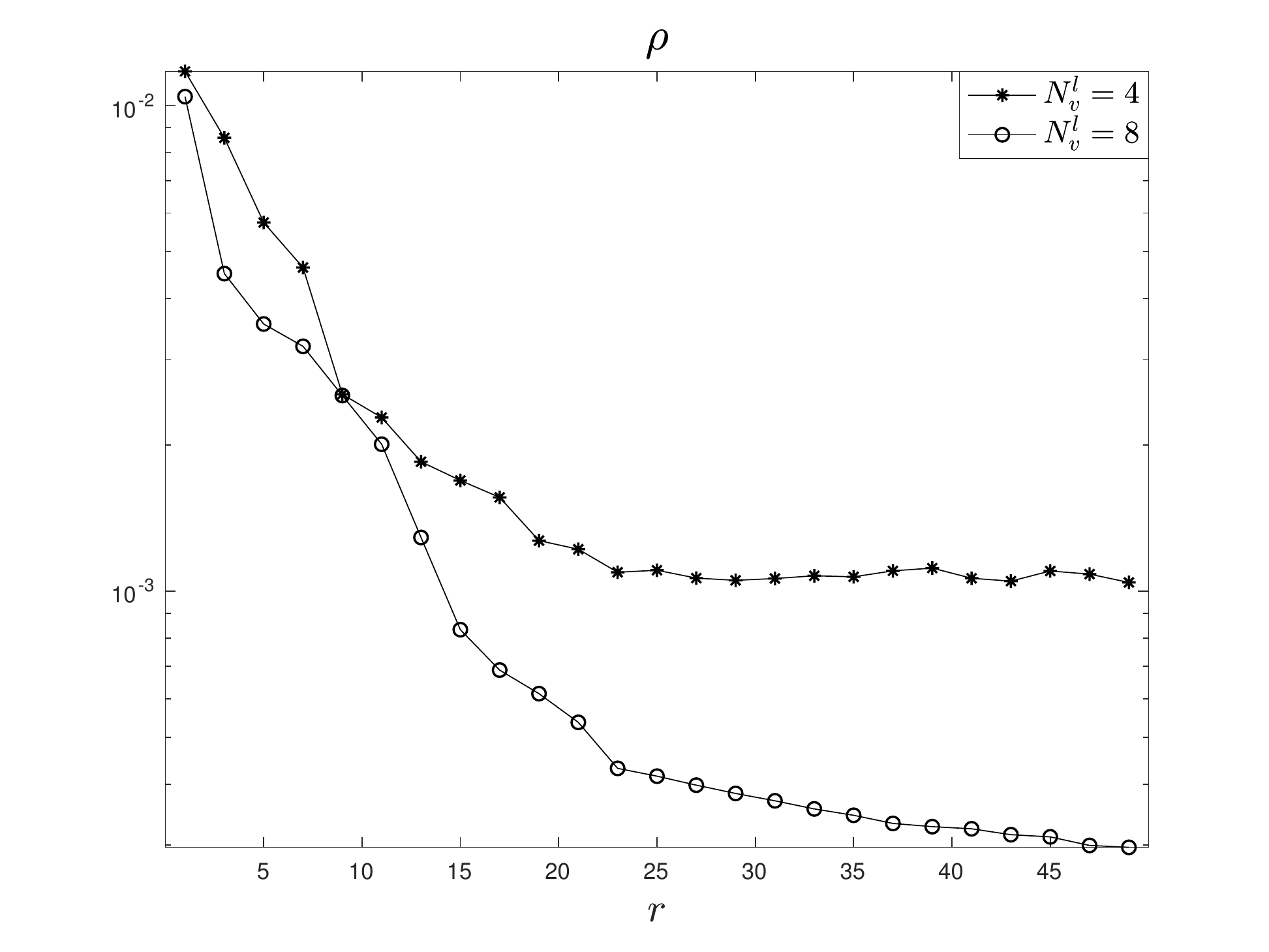}
\includegraphics[width=0.45\linewidth]{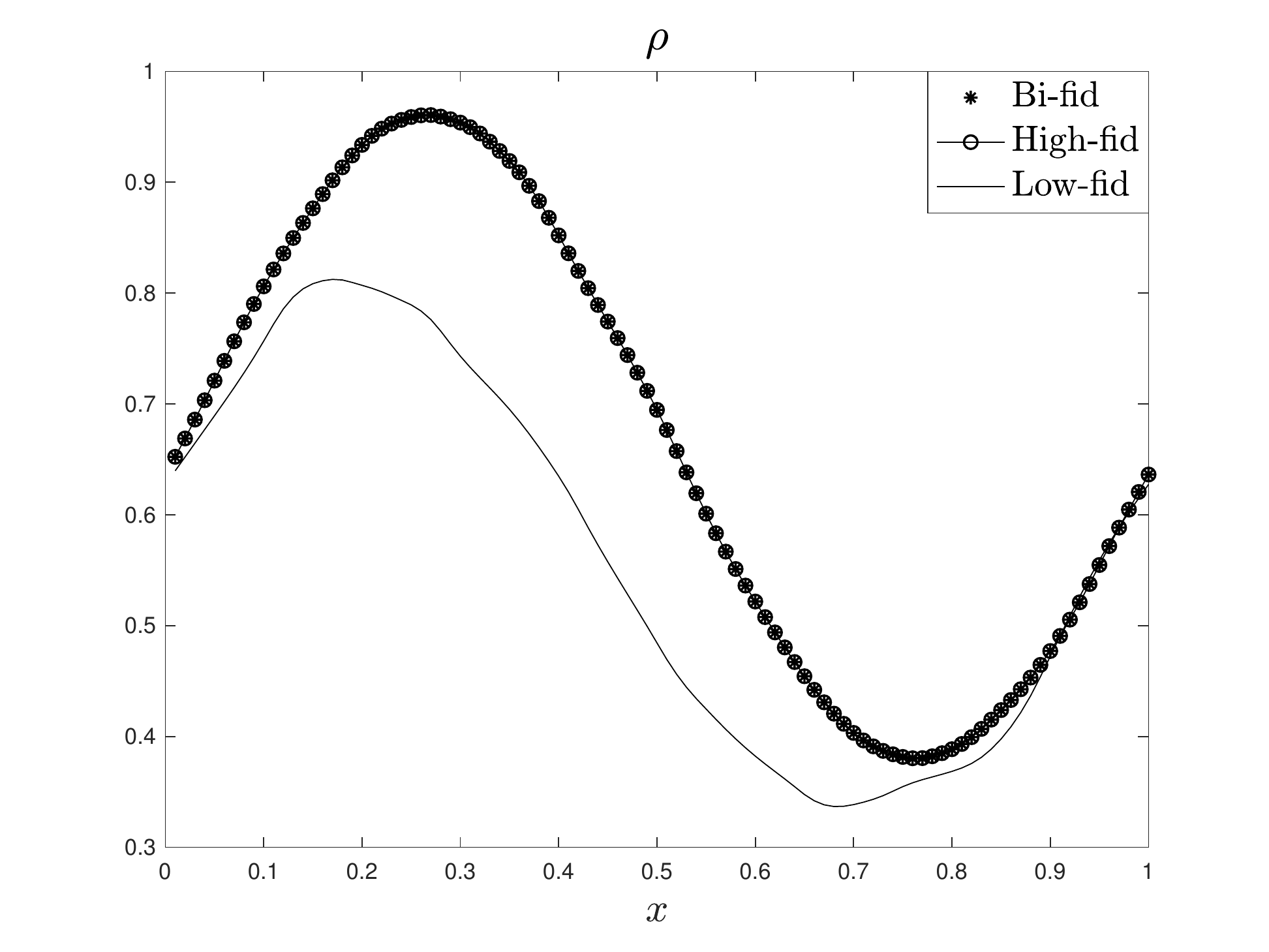}
\includegraphics[width=0.45\linewidth]{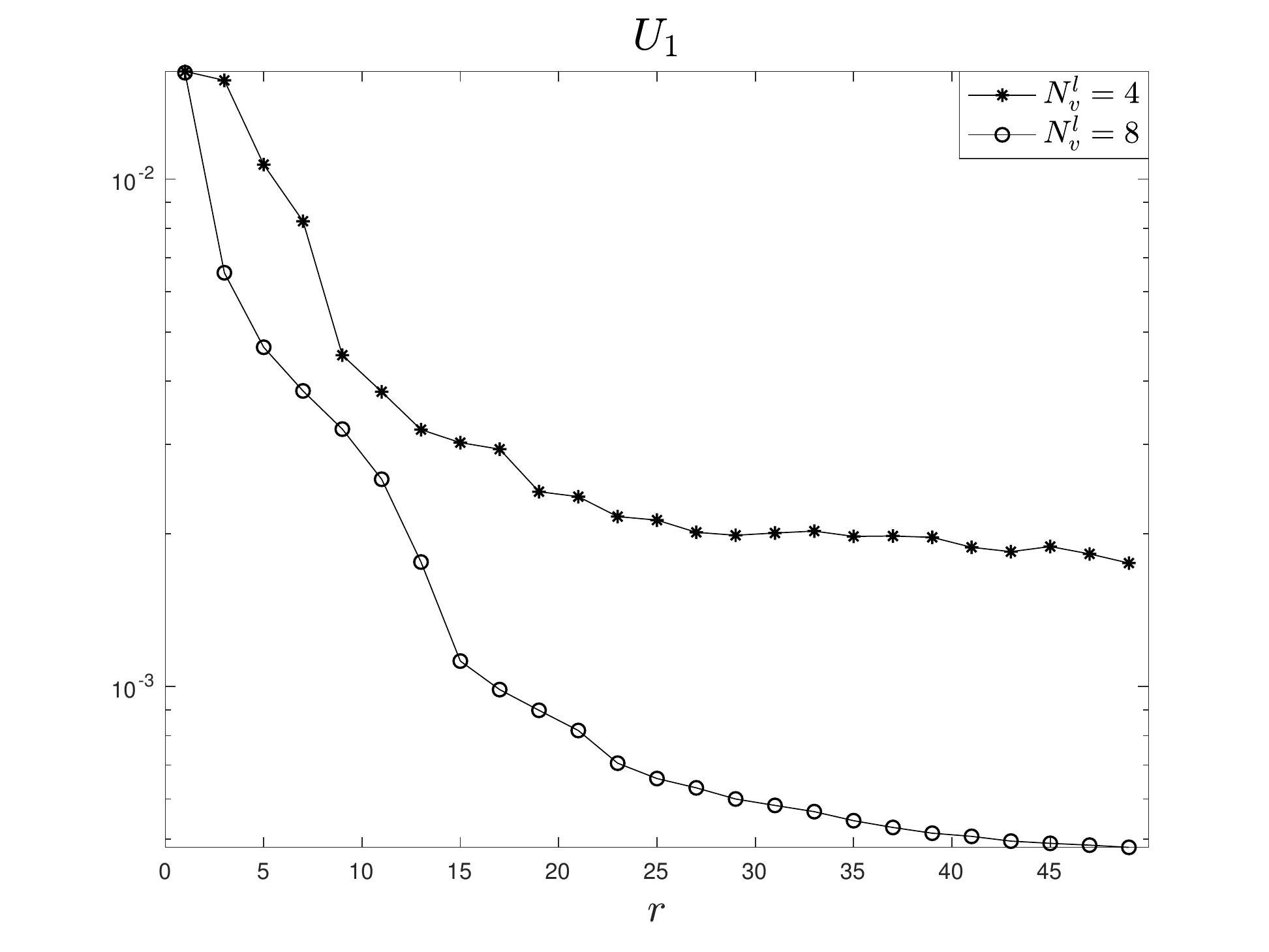}
\includegraphics[width=0.45\linewidth]{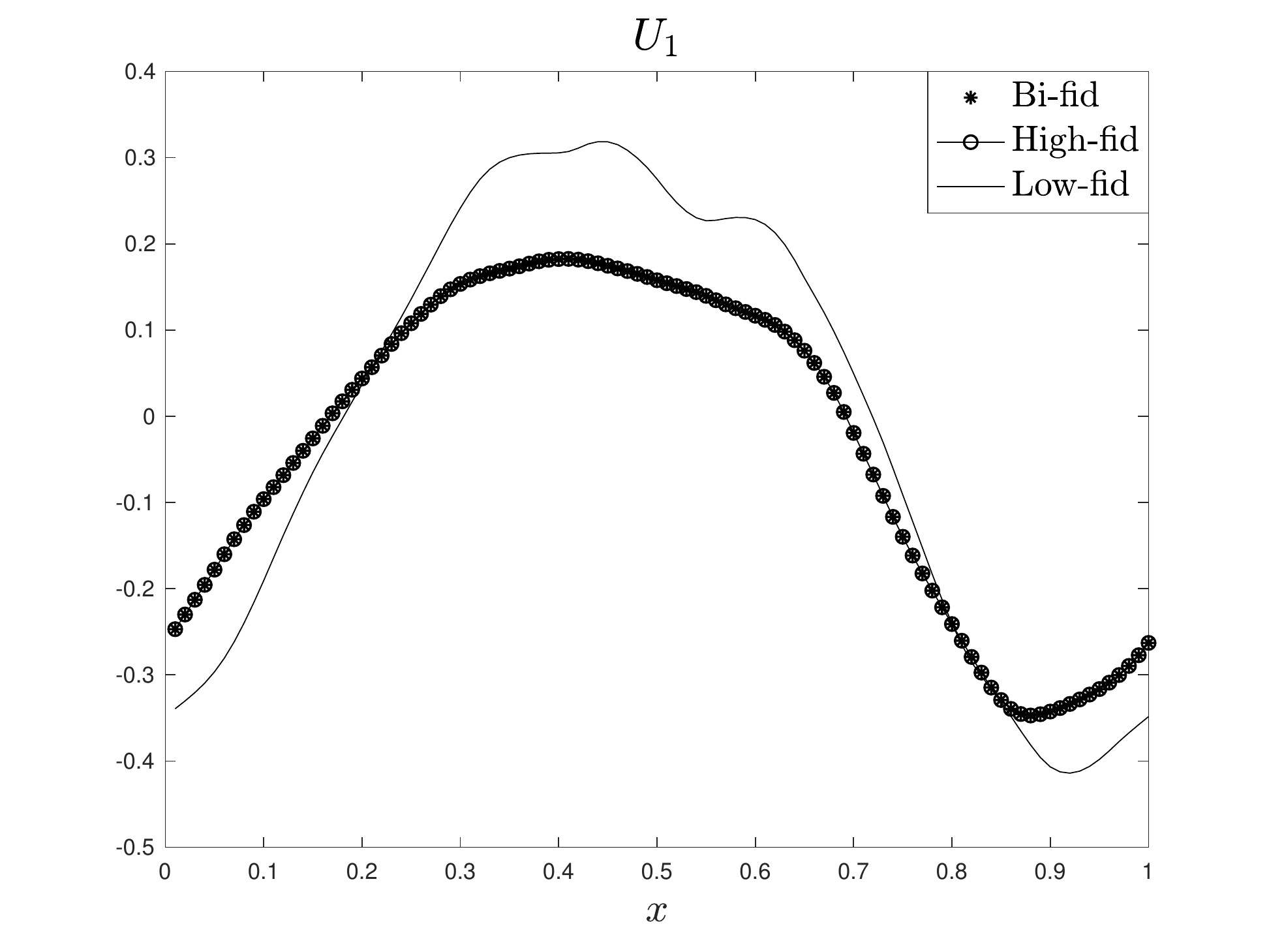}
\includegraphics[width=0.45\linewidth]{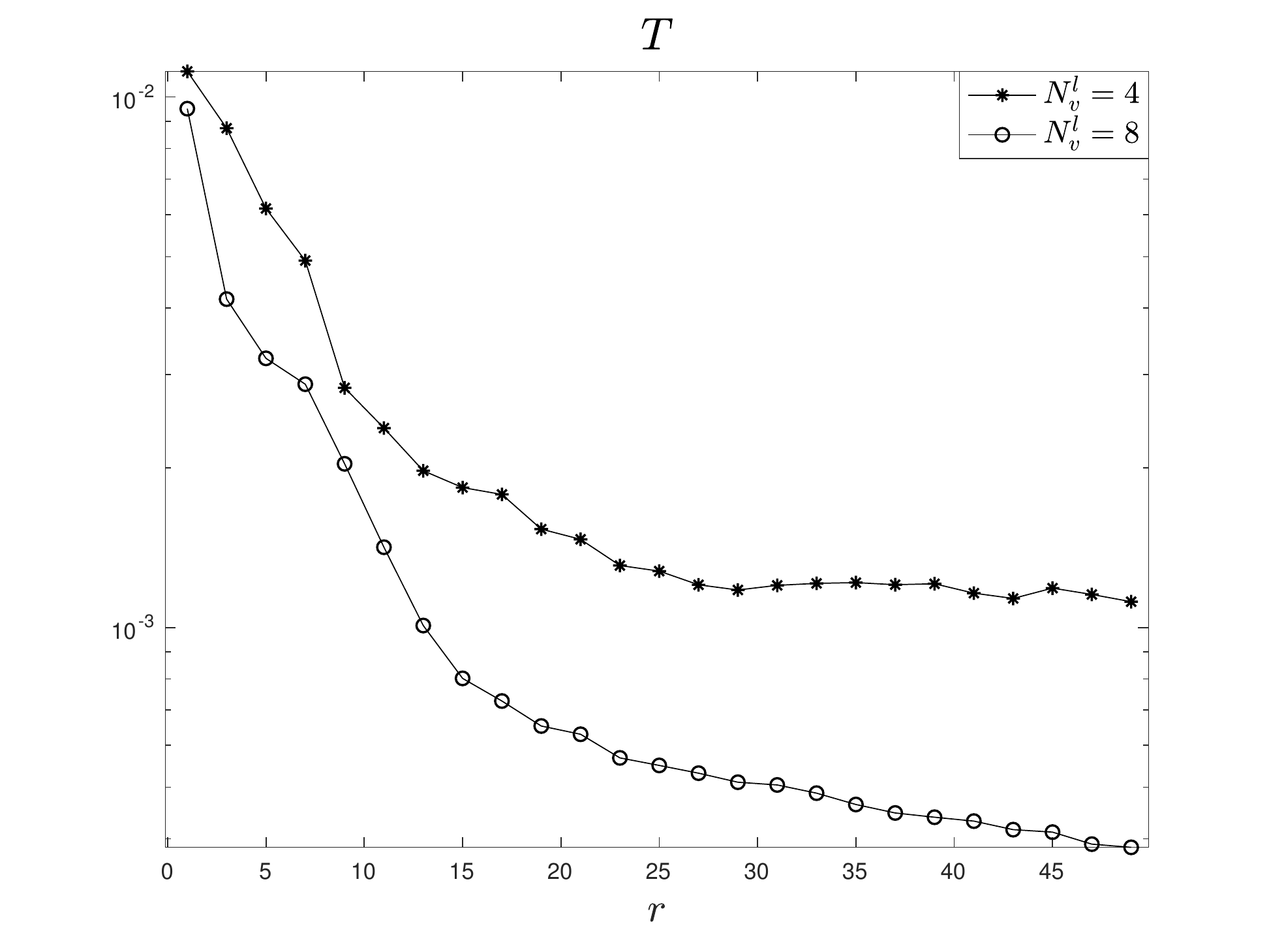}
\includegraphics[width=0.45\linewidth]{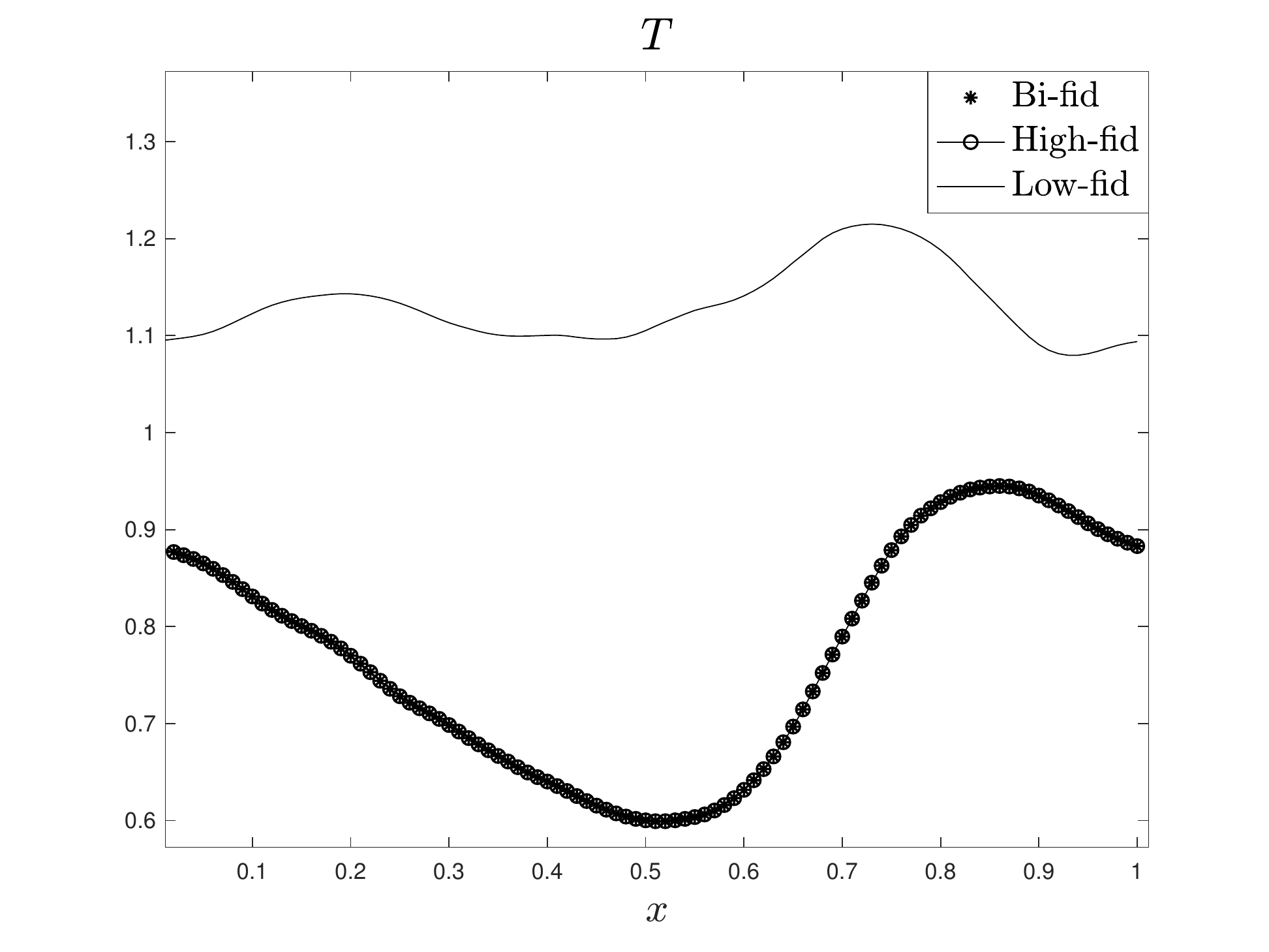}
\caption{(Left) The mean $L^2$ error of the bi-fidelity approximation of $\rho$, $u_1$, $T$  with respect to the number of high-fidelity simulation runs, based on  the low-fidelity model with different $N_v^l$. (Right) Comparison of the low-fidelity solution ($N_v^l=8$), high-fidelity solutions ($N_v^l=16$), and the corresponding bi-fidelity approximations with $r=20$ for a fixed $z$. }
\label{Fig2}
\end{figure}

\subsection{Sod shock tube test}
\label{Test2}
We next consider a more challenging problem where the initial data is discontinuous. Assume the random collision kernel in the form 
$$ b({\bf z}^b) = 1 + 0.5 \sum_{k=1}^{d_1+1}\frac{z_k^b}{2k}, $$
and the random initial distribution $$ f^0(x,{\bf v},{\bf z}) = \frac{\rho^0}{2\pi T^0}\, e^{-\frac{|{\bf v}-u^0|^2}{2T^0}}, $$
where the initial data for $\rho^0$, $u^0$ and $T^0$ is given by 
\begin{align*}
\begin{cases}
&\displaystyle\rho_l=1, \qquad u_l=(0,0),  \qquad T_l({\bf z}^T)=1+0.4\sum_{k=1}^{d_1} \frac{z_k^T}{2k}, \qquad x\leq 0.5, \\[4pt]
&\displaystyle\rho_r=\frac{1}{8}, \qquad u_r=(0,0),  \qquad T_r({\bf z}^T)=\frac{1}{8}(1 + 0.4\sum_{k=1}^{d_1}\frac{z_k^T}{2k}),  \qquad x>0.5. 
\end{cases}
\end{align*}
Here ${\bf z}^b=\left(z_1^b, \cdots, z_{d_1+1}^b\right)$ and ${\bf z}^T=\left(z_1^T, \cdots, z_{d_1}^T\right)$ represent the random variables
in the collision kernel and initial temperature. Set $d_1=7$, then the total dimension $d$ of the random space is $15$. We use the Boltzmann equation as the high-fidelity model, and solve it by $\Delta x=0.01$, $\Delta t = 8\times 10^{-4}$, and $N_v^h=24$, until the final time $t=0.15$. 
We shall employ the Euler equation as the low-fidelity model, and solve it with the same spacial and temporal resolution with the high-fidelity model but with $N_v^l=12$. We consider the fluid regime with $\e=10^{-4}$ in this test. 

From the left column of Figure \ref{Fig6-conv}, we see a fast convergence of $L^2$ errors between the high- and bi-fidelity solutions. With only 10 high-fidelity runs, the bi-fidelity approximation can reach an accuracy level  $\mathcal{O}(10^{-3})$ for a 15-dimensional  problem in random space, while the low-fidelity approximation is quite poor with an accuracy level  $\mathcal{O}(10^{-1})$.
To further illustrate the performance of our bi-fidelity method, we compared the high-, low- and the corresponding bi-fidelity solutions (with $r=10$) for a particular sample point $z$. One observes that the high- and bi-fidelity solutions match really well, whereas the low-fidelity solutions seem to be quite inaccurate at some points in the spatial domain. 
Even in this case, the bi-fidelity solutions can approximate the high-fidelity solutions very well.

Figure \ref{Fig6-mv} shows clearly that the mean and standard deviation of the bi-fidelity approximation of $\rho$, $u_1$ and $T$ agree well with the high-fidelity solutions by using only $10$ high-fidelity runs. 
The result is a bit surprising yet reasonable, suggesting that even though the Euler model may be inaccurate in the physical space, it still can capture the behaviors and characteristics of the solution to the Boltzmann equation
in the random space. Moreover, since the high-fidelity model (Boltzmann) with $N_v^h=24$ costs approximately $43$ times of the low-fidelity solver (Euler) with $N_v^l=12$ (the former takes $30.6$ seconds, the latter takes 
$0.7$ seconds for one single run;  a significant speedup is quite noticeable in this case. 

\begin{figure}[H]
\includegraphics[width=0.48\linewidth]{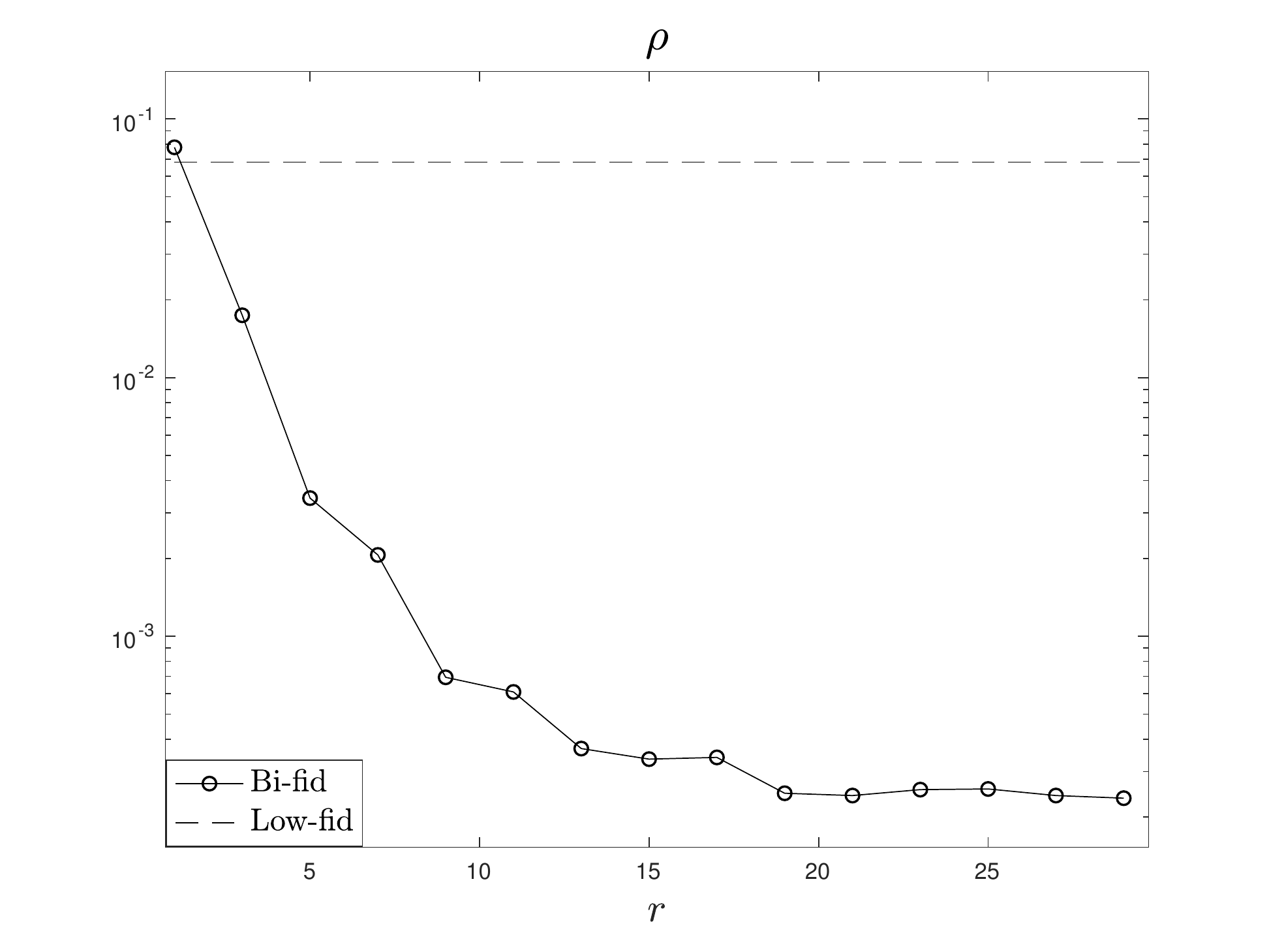}
\includegraphics[width=0.48\linewidth]{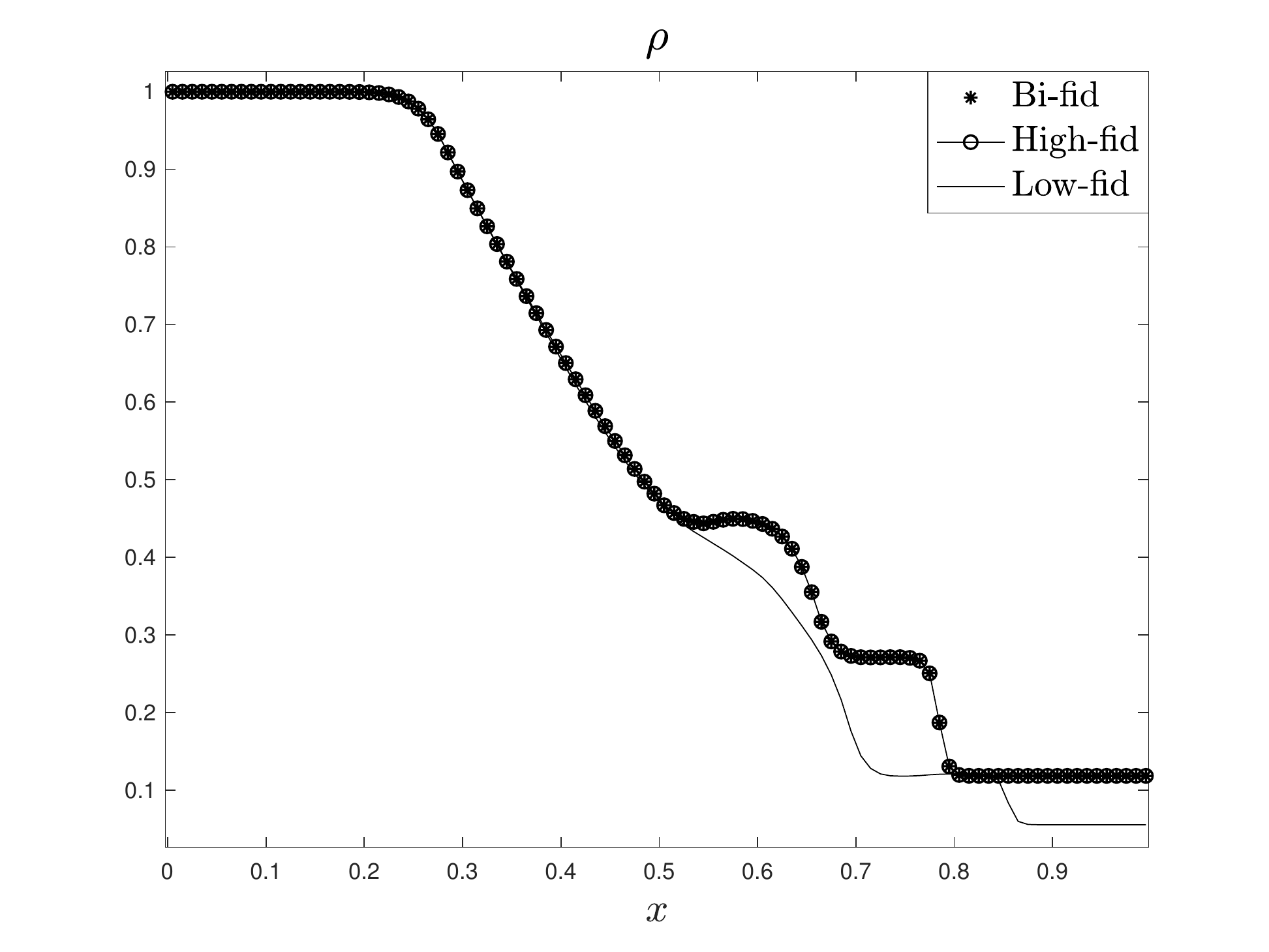}
\includegraphics[width=0.48\linewidth]{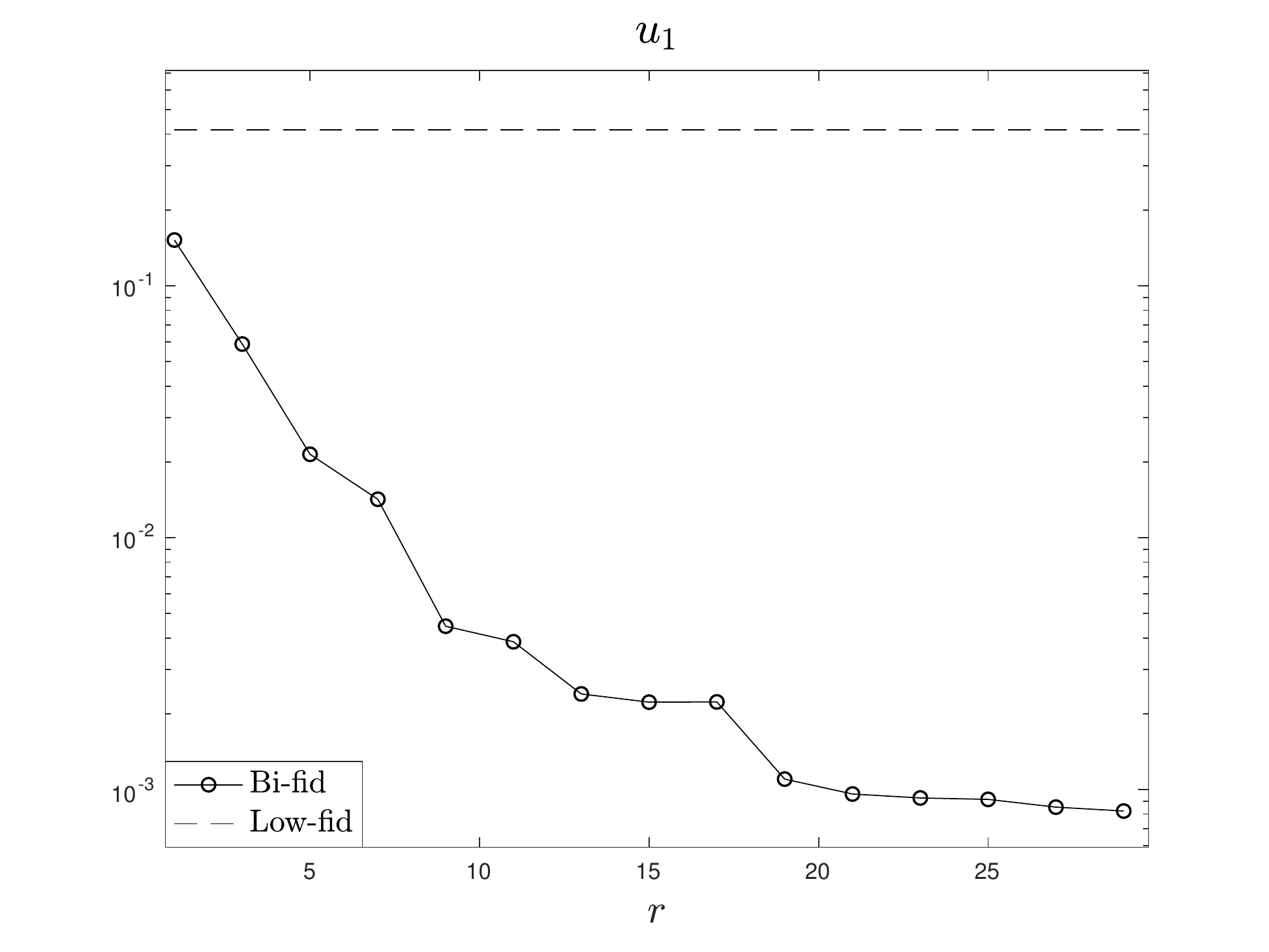}
\includegraphics[width=0.48\linewidth]{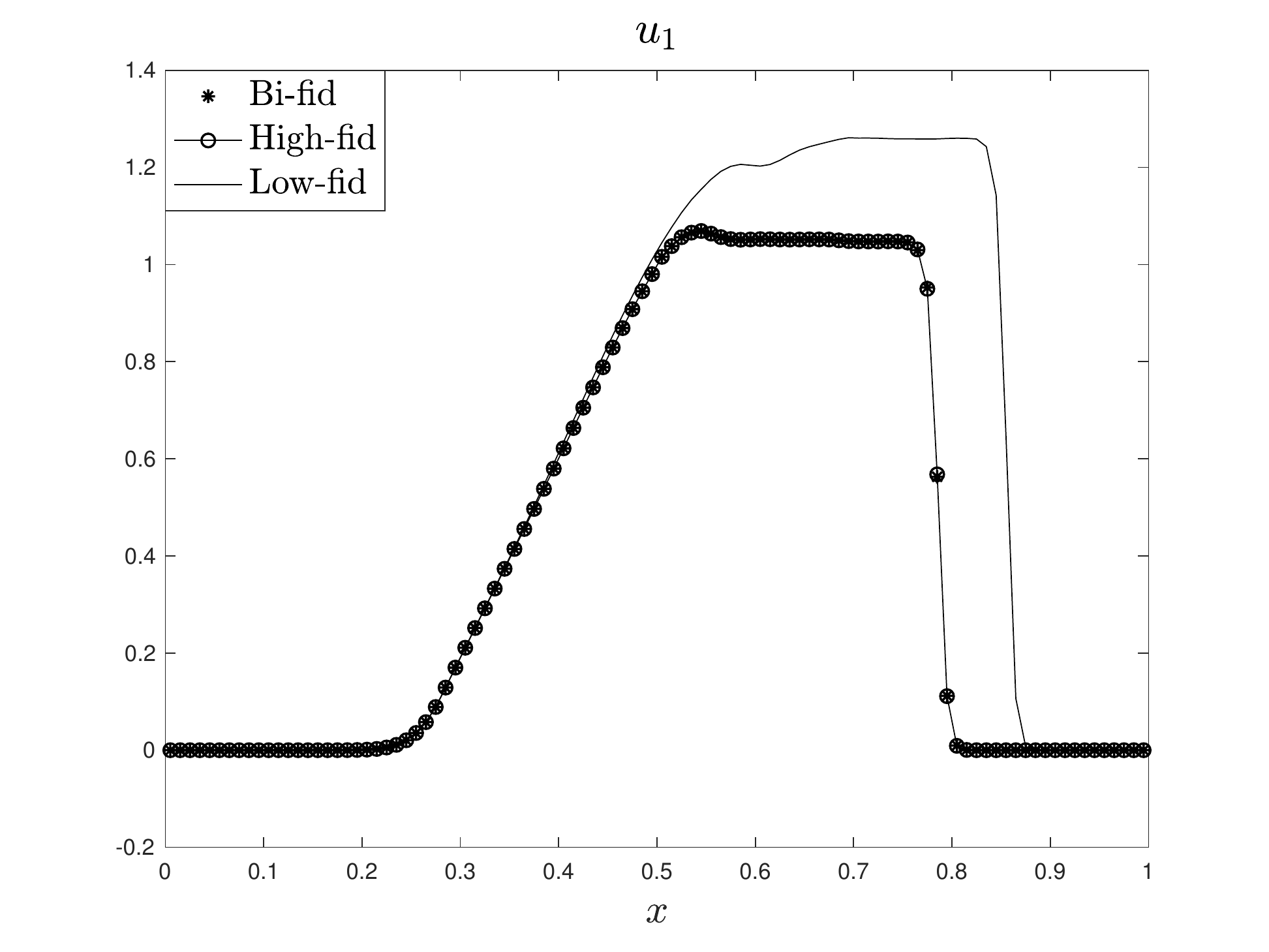}
\includegraphics[width=0.48\linewidth]{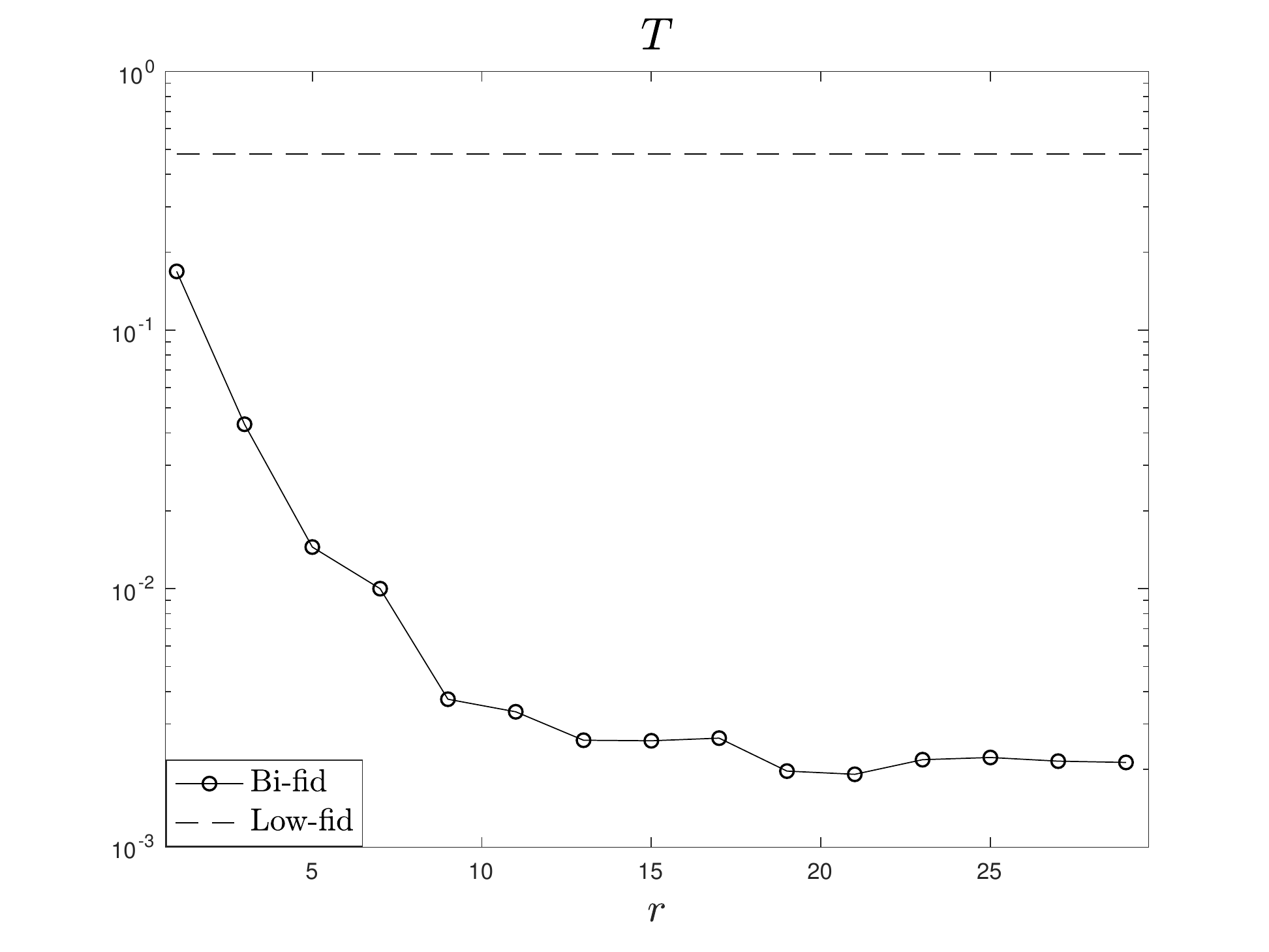}
\includegraphics[width=0.48\linewidth]{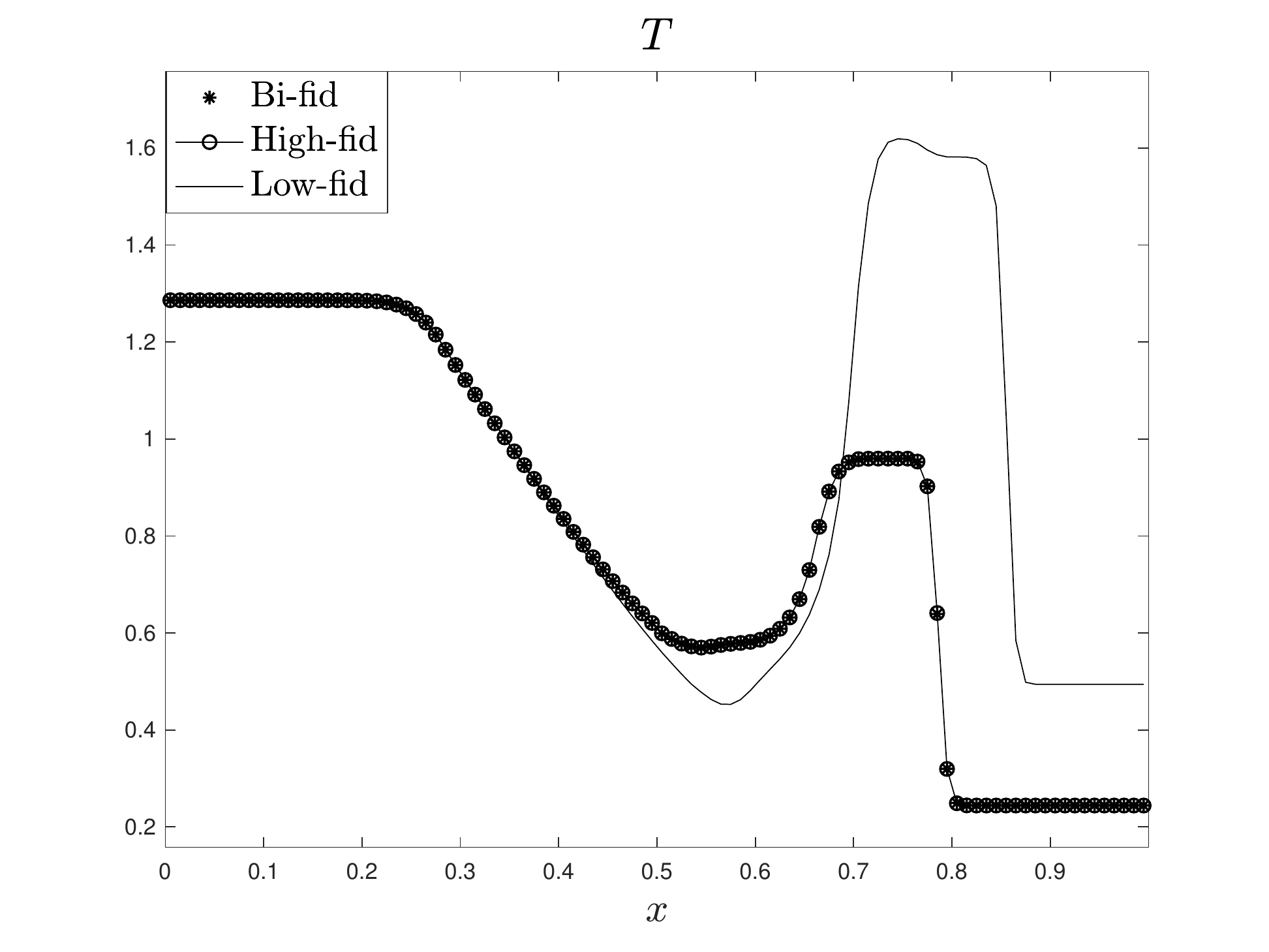}
\caption{(Left) The mean $L^2$ errors between high-fidelity and 
low- or bi-fidelity solutions with respect to the number of high-fidelity runs; (Right) Comparison of the low-fidelity solution ($N_v^l=12$), high-fidelity solutions ($N_v^l=24$), and the corresponding bi-fidelity approximations $r=10$ for a fixed $z$.  }
\label{Fig6-conv}
\end{figure}

\begin{figure}[H]
\centering
\includegraphics[width=0.49\linewidth]{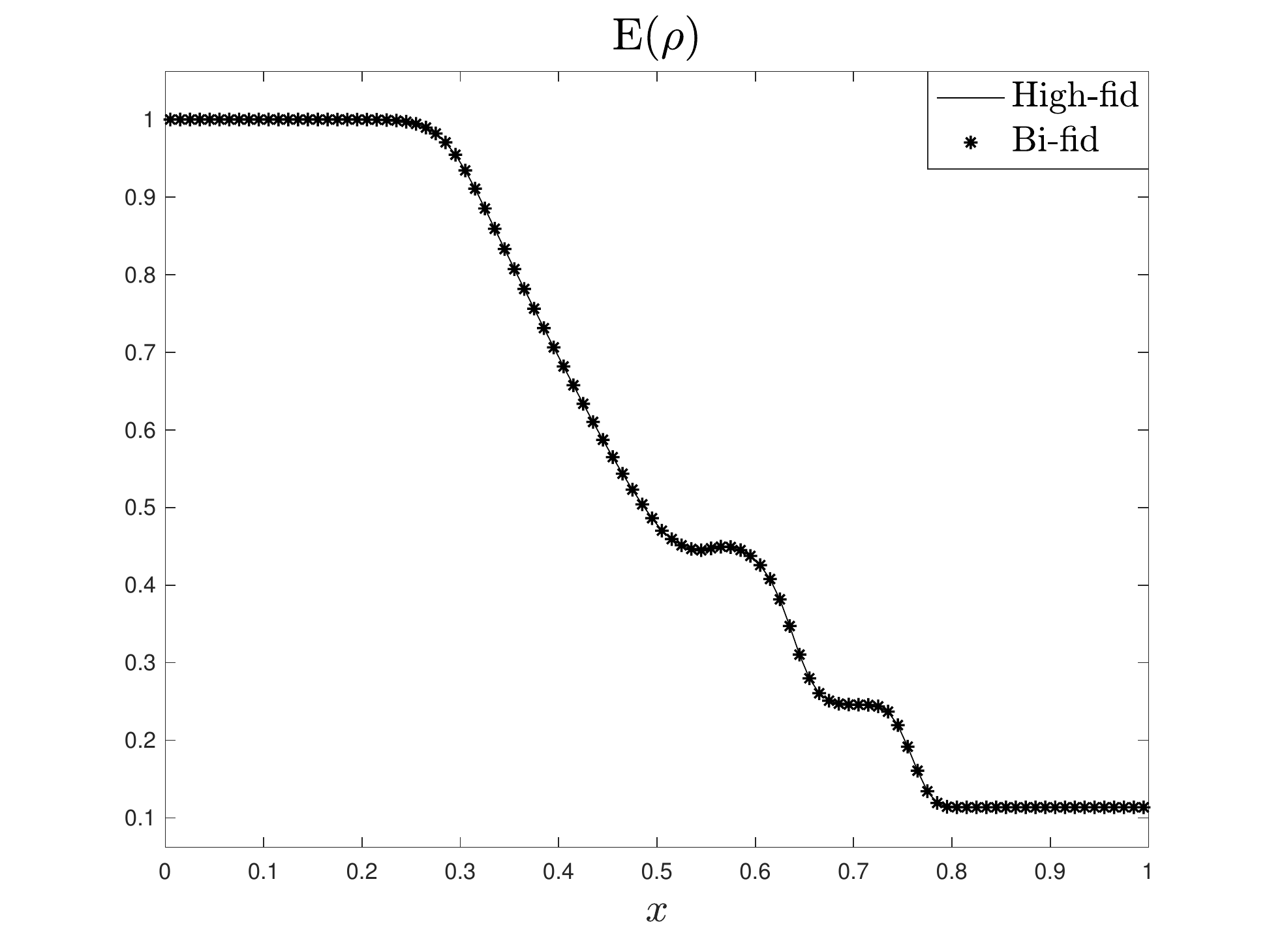}  
\includegraphics[width=0.49\linewidth]{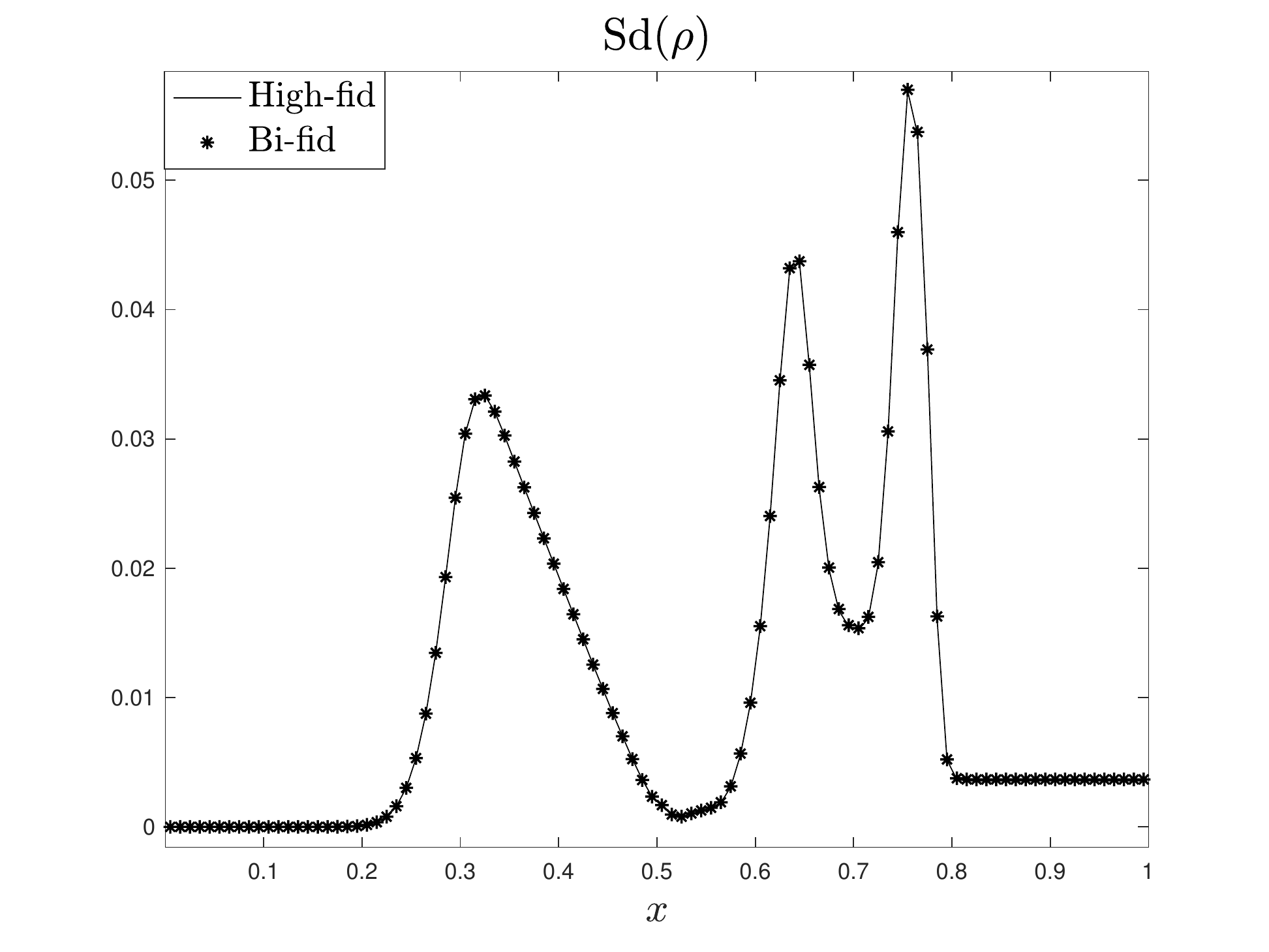}
\includegraphics[width=0.49\linewidth]{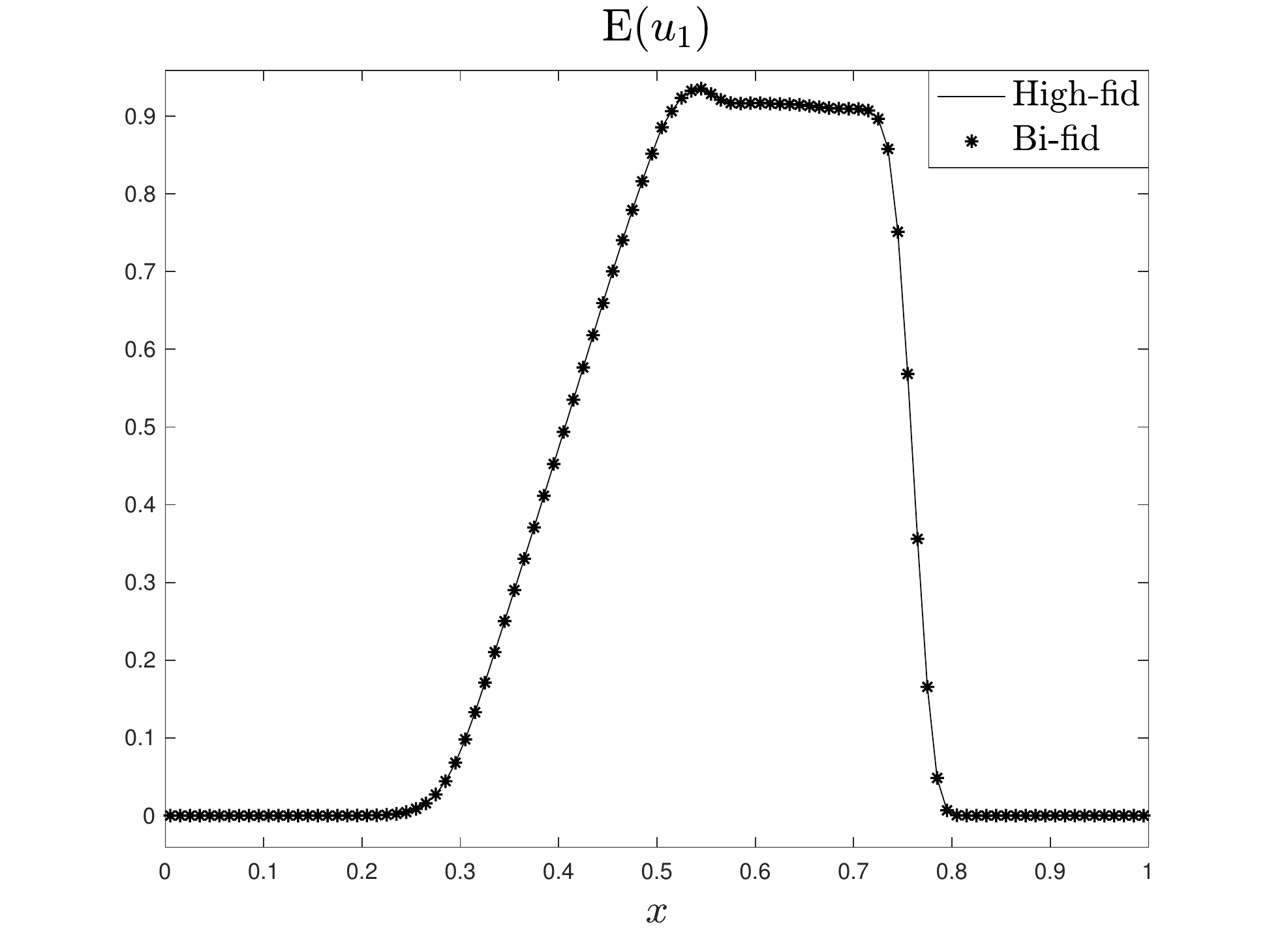}
\includegraphics[width=0.49\linewidth]{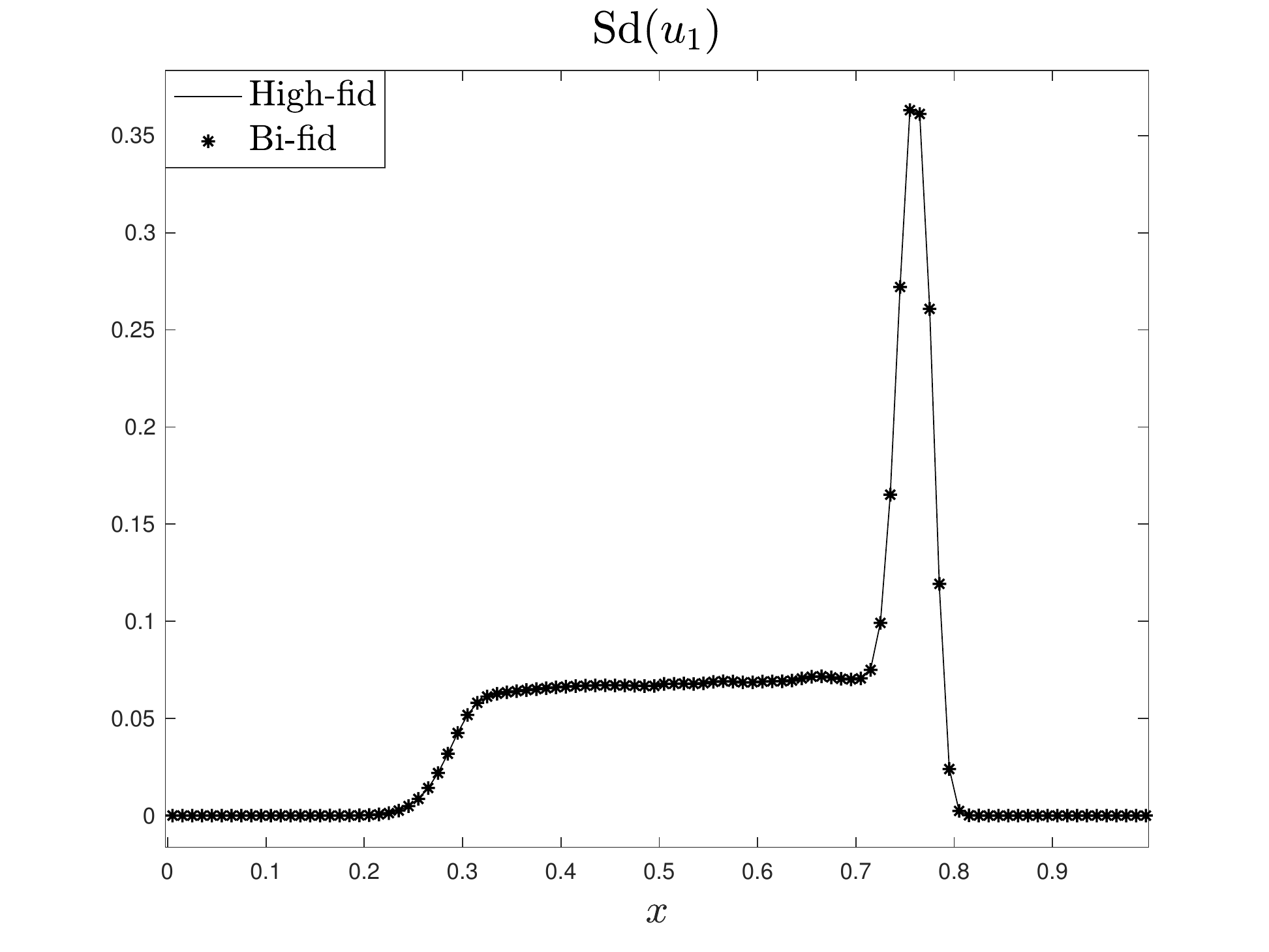}
\includegraphics[width=0.49\linewidth]{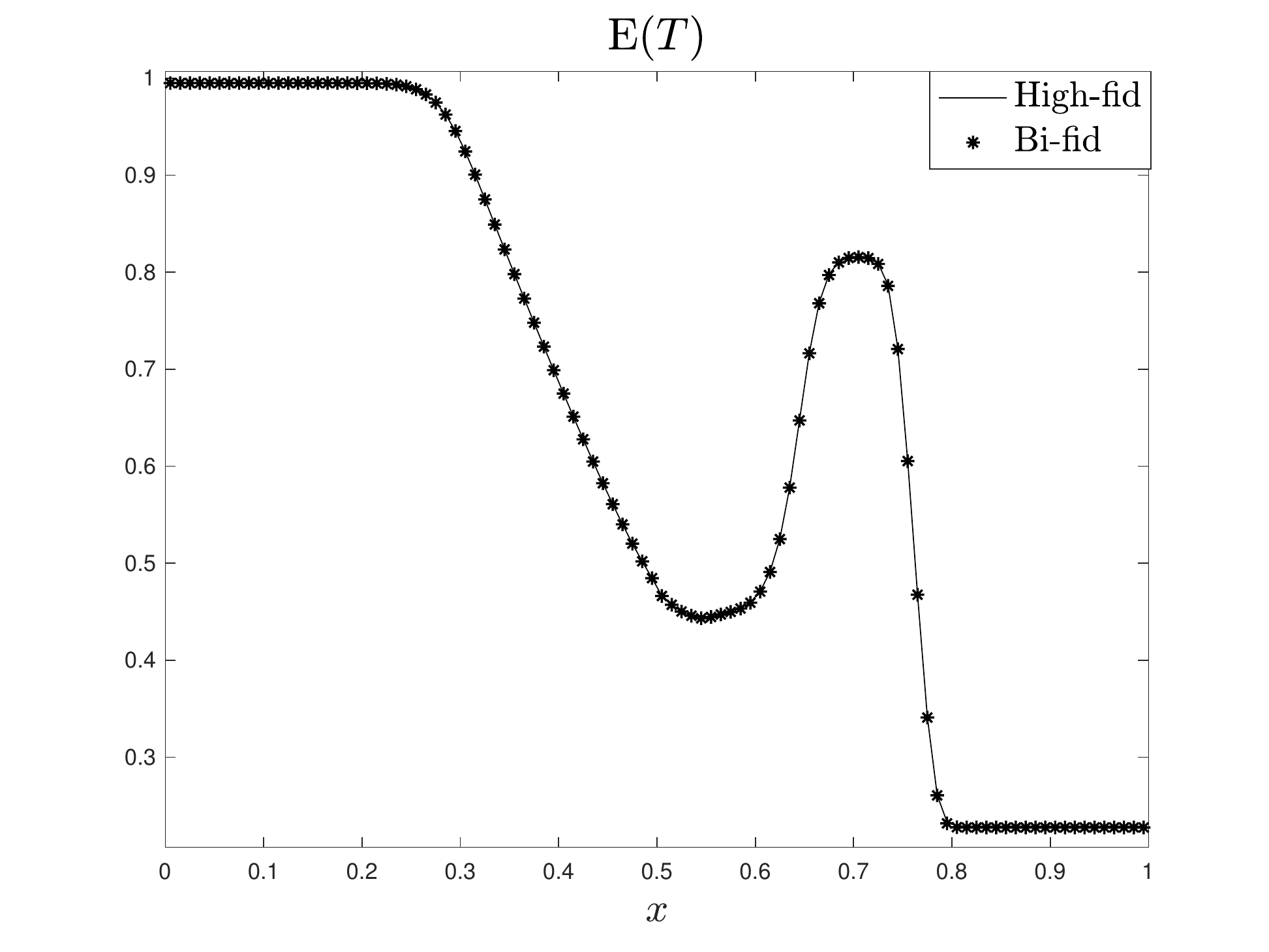}
\includegraphics[width=0.49\linewidth]{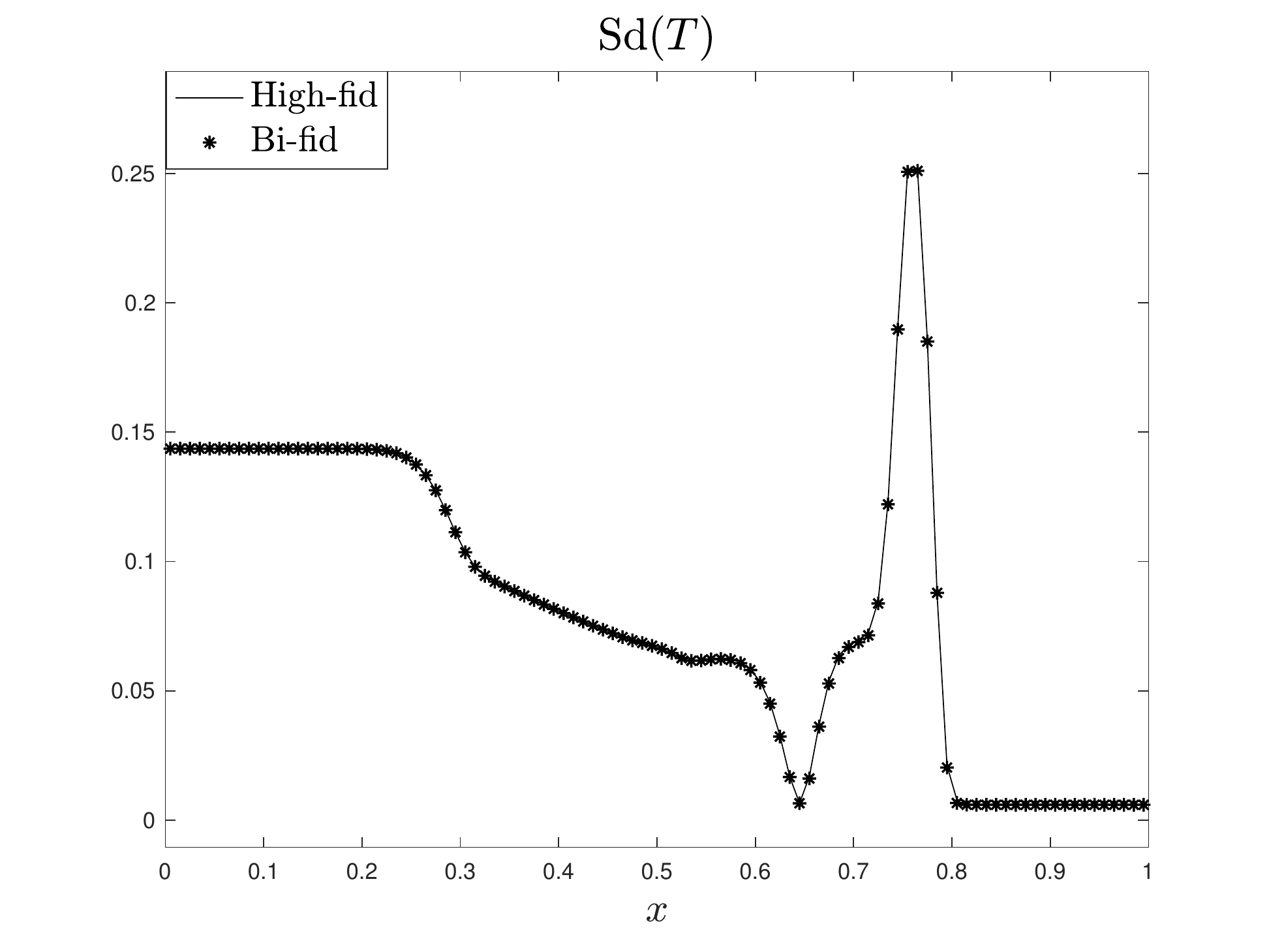}
\caption{Mean and standard deviation of $\rho$, $u_1$, $T$ of high-fidelity solutions and bi-fidelity solutions with $r=10$. }
\label{Fig6-mv}
\end{figure}

\subsection{Mixed regime test}
\label{Test3}
The next test we shall consider is more challenging than the previous two tests. Because various scales are involved, good accuracy of the AP scheme for the Boltzmann equation is required for all ranges of $\e$. 
We consider a mixed regime with the Knudsen number 
$\varepsilon$ varying in space show in Figure.~\ref{figmixe} and given by 
\begin{equation}
\label{mixe}
 \e(x) = 10^{-3} + \frac{1}{2}\left[\tanh \left(1-\frac{11}{2}(x-0.5) \right) + \tanh \left(1+\frac{11}{2}(x-0.5) \right)\right].
\end{equation}
The random initial data and collision kernel are given by (\ref{IC}) and (\ref{R-K}). The total dimension 
of the random space is $d=15$. 

\begin{figure}[H]
\centering
\includegraphics[width=0.5\linewidth]{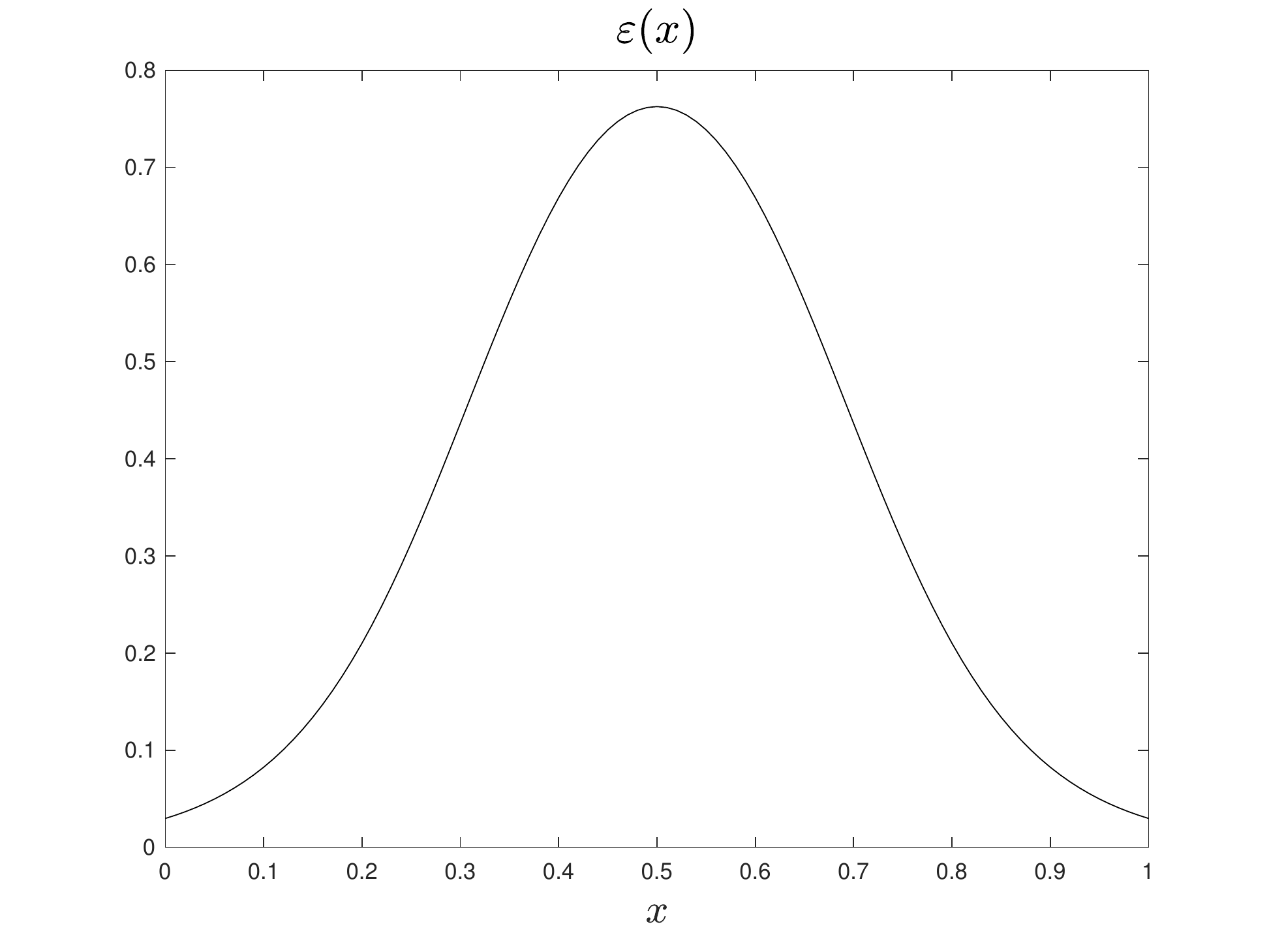}
\caption{The distribution of $\e(x)$ in \eqref{mixe}.}
\label{figmixe}
\end{figure}

All the numerical parameters used in temporal and spatial discretizations are the same as that in Section \ref{Test1}. 
We solve the Boltzmann equation (\ref{Boltz}) for the high-fidelity solution with $N_v^h=24$, and the Euler system for the low-fidelity solution with 
$N_v^l=8$. 

From the left column of Figure \ref{Fig5-conv}, we observe a fast convergence of $L^2$ errors between the high- and bi-fidelity solutions, where 
they saturate quickly when $r$ reaches about $25$. It is worth noting that the dotted lines that represent the errors between the high- and low-fidelity solutions      
are much larger $\mathcal{O}(10^{-1})$ compared to that between the high- and bi-fidelity solutions. This indicates that even though the low-fidelity solutions alone are relatively not accurate in the spatial domain, it might be still  able to behave similarly in the random space, therefore the resulted bi-fidelity approximation based on a small number of high-fidelity runs (say $r=25$) can reach a reasonable accuracy level up to $\mathcal{O}(10^{-3})$. 

The right column of Figure \ref{Fig5-conv} shows the high-, low- and bi-fidelity solutions at a randomly chosen sample point $z$. 
One can see that the high- and bi-fidelity solutions match really well, whereas the low-fidelity solutions are not accurate. In addition, with $N=1000$ low-fidelity runs of the Euler model, together with only $25$ runs of the AP solver to the Boltzmann model, one can get the bi-fidelity solutions which are able to capture behavior of the solutions to the Boltzmann equation in the random space, up to an accuracy of $10^{-3}$; on the other hand, using the low-fidelity model (Euler equation) alone can not achieve this result, especially under the multiple scalings where $\e$ ranges from $10^{-3}$ to 1 (since the errors between macroscopic quantities calculated from the Boltzmann and Euler equation deteriorate when $\e$ becomes large). This observation certainly highlights the merits of our bi-fidelity method.
Figure \ref{Fig5-mv} presents the mean and standard deviation of $\rho$, $u_1$, $T$ by using $25$ high-fidelity runs. One can see that the high- and bi-fidelity solutions match well, indicating that the bi-fidelity solutions have captured well the characteristics of the macroscopic quantities in the random space.

Once we construct the bi-fidelity model, the online computational cost can be significantly reduced. In this example, the high-fidelity model (Boltzmann) with $N_v^h=16$ takes  about $50$ times computation time of the low-fidelity model (the former takes $12.3$ seconds, while the latter takes $0.23$ seconds for a single run). Since the dominant cost of the bi-fidelity reconstruction for a high-fidelity solution lies in the corresponding low-fidelity run, a significant amount of computational cost is saved in our method. 

\begin{figure}[H]
\includegraphics[width=0.48\linewidth]{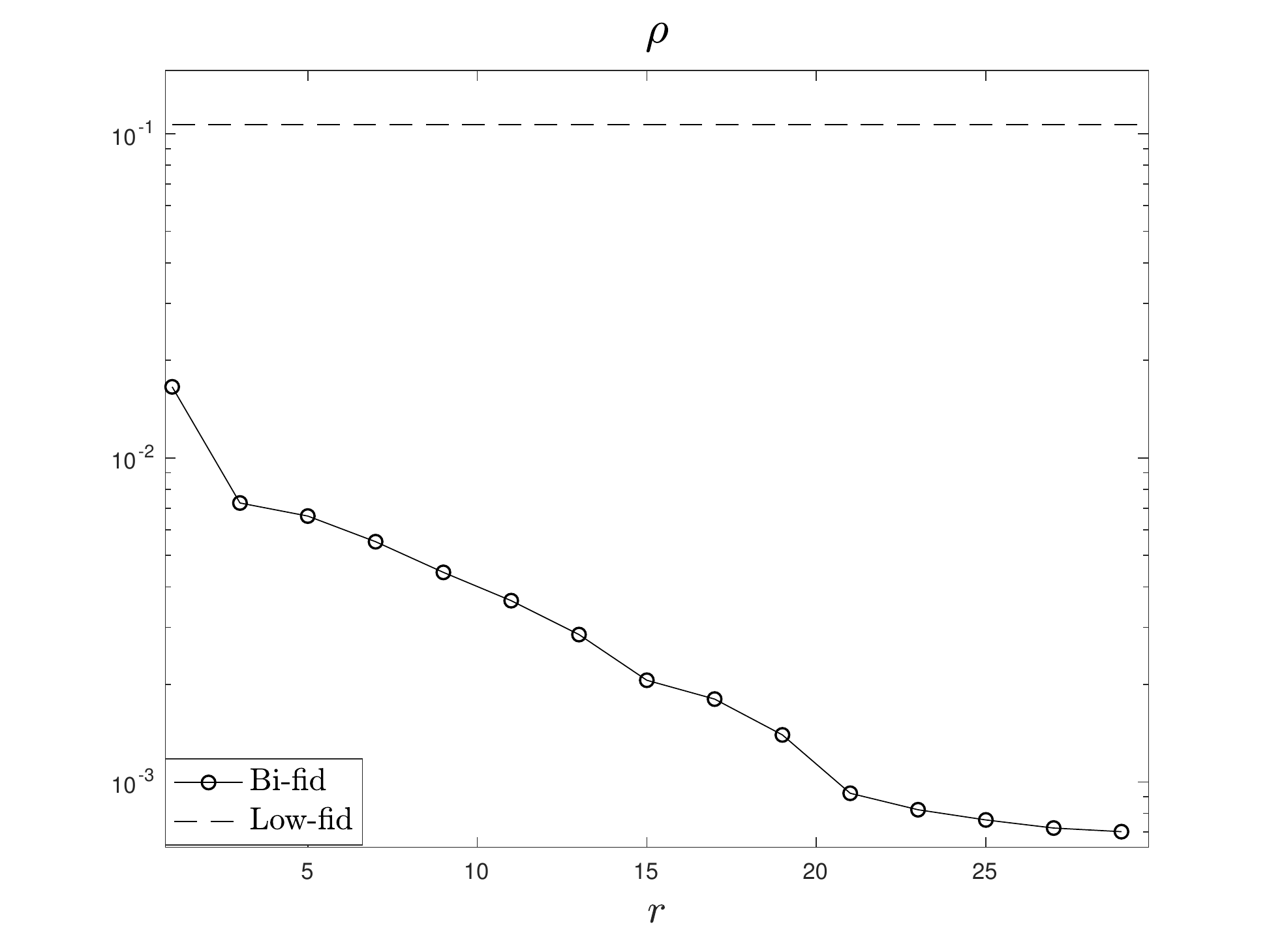}
\includegraphics[width=0.48\linewidth]{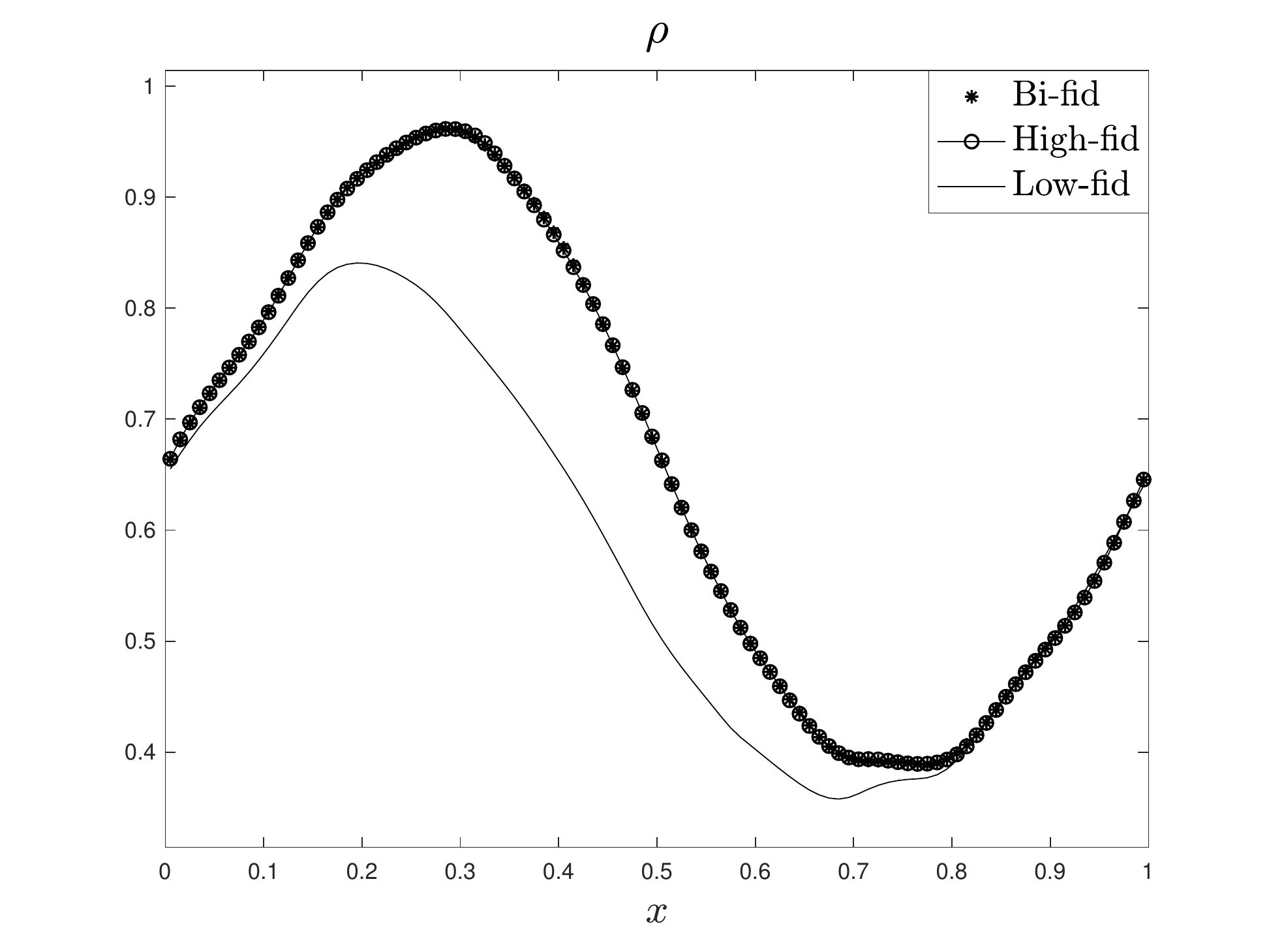}
\includegraphics[width=0.48\linewidth]{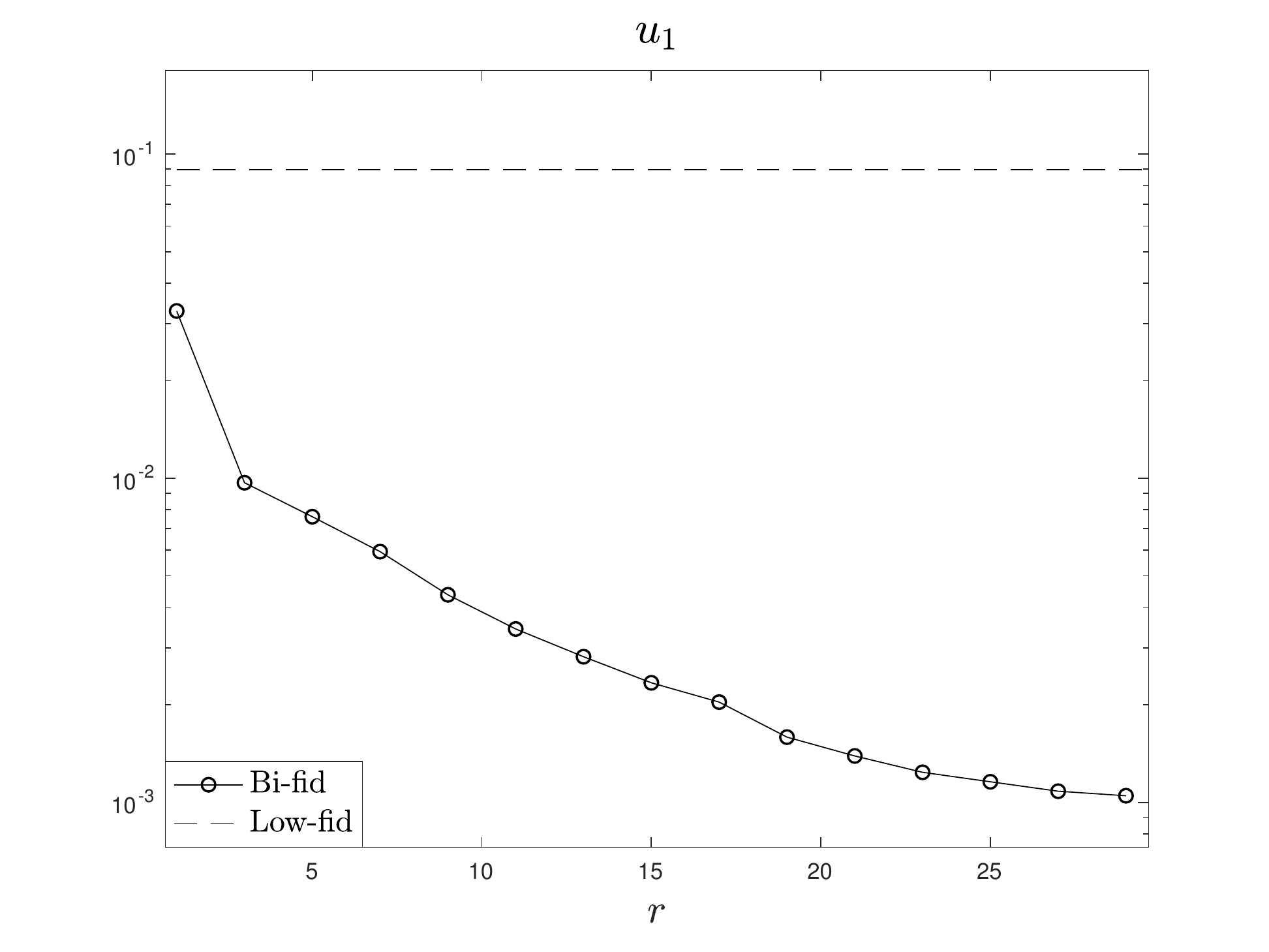}
\includegraphics[width=0.48\linewidth]{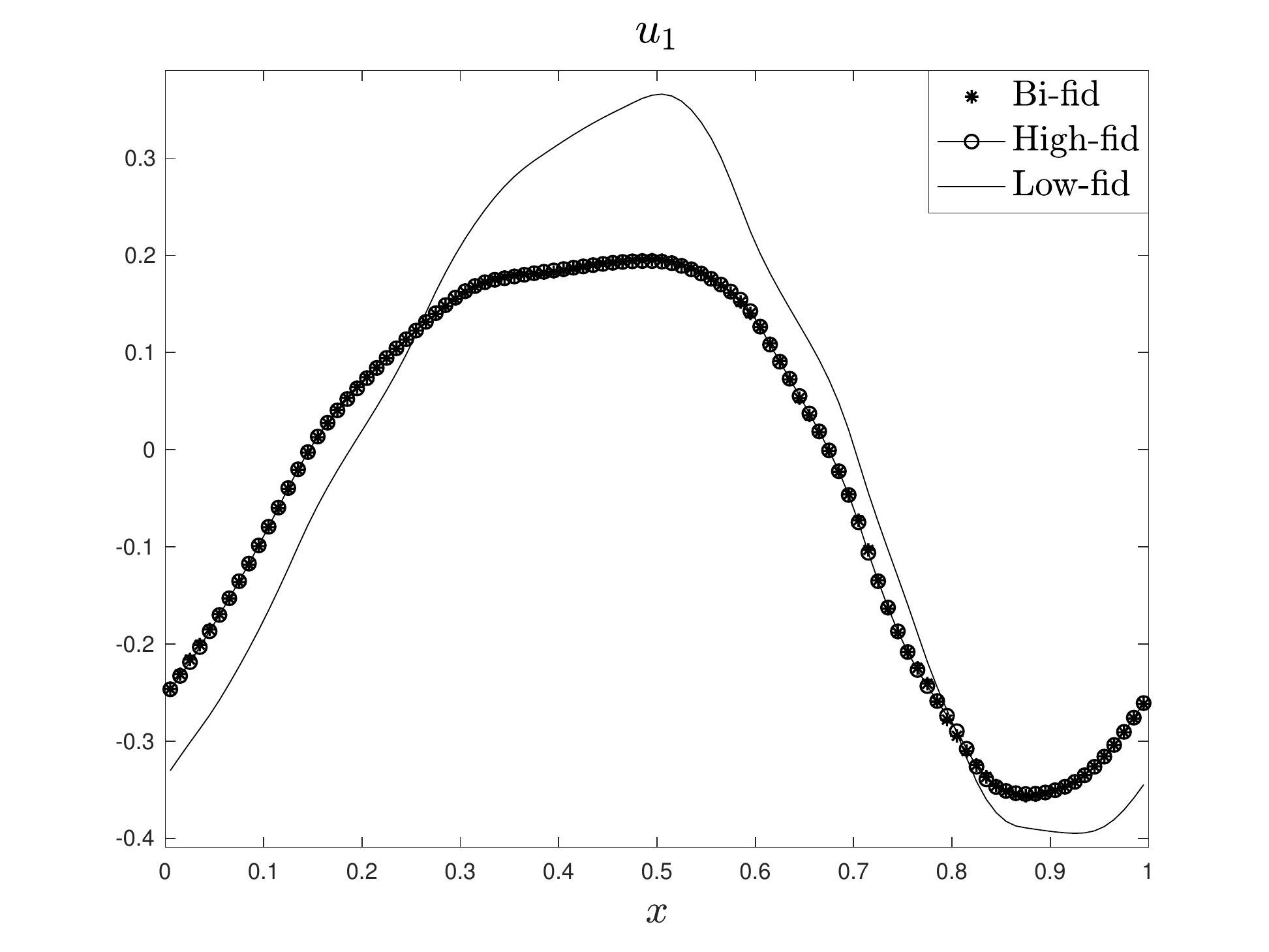}
\includegraphics[width=0.48\linewidth]{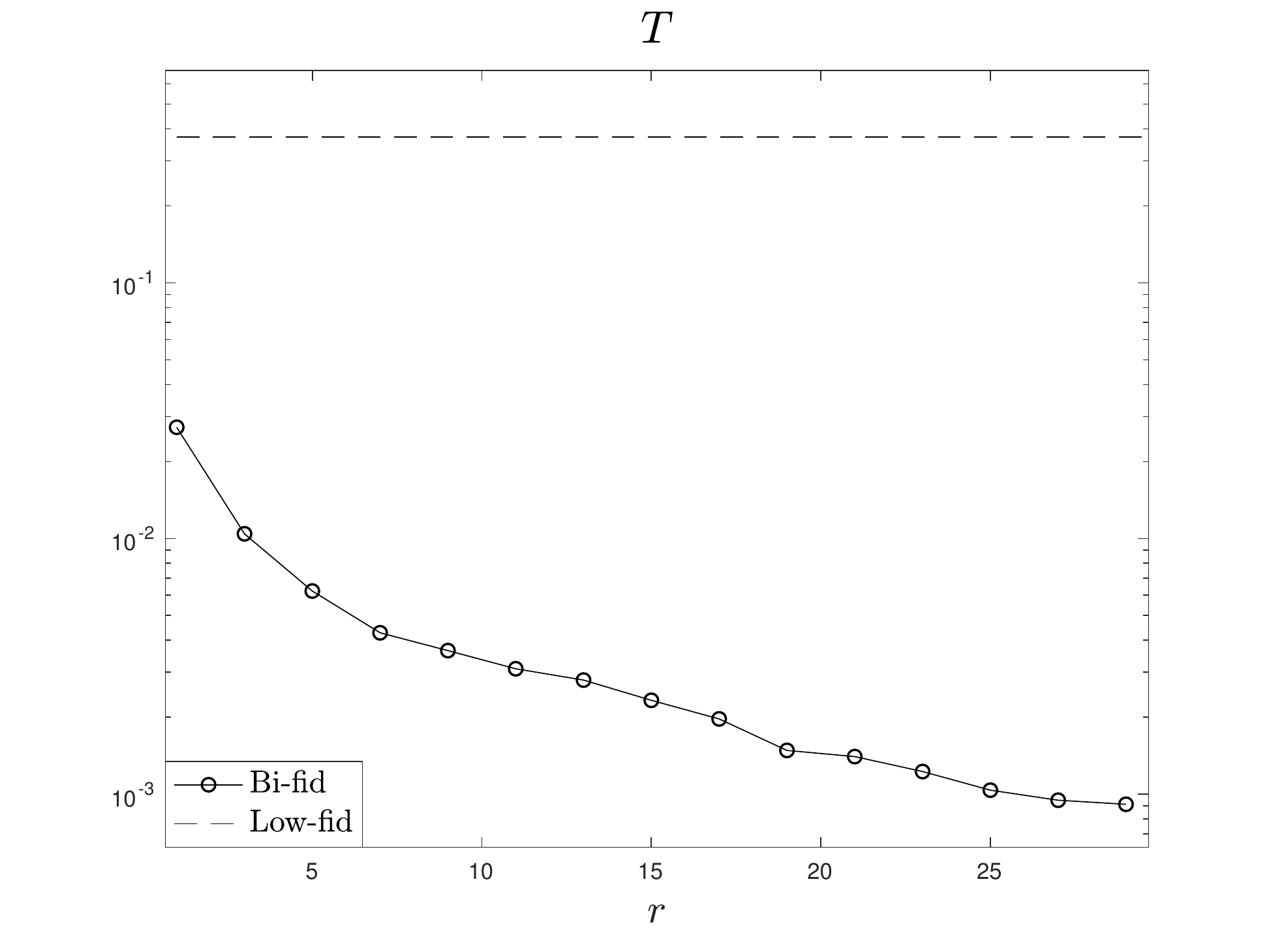}
\includegraphics[width=0.48\linewidth]{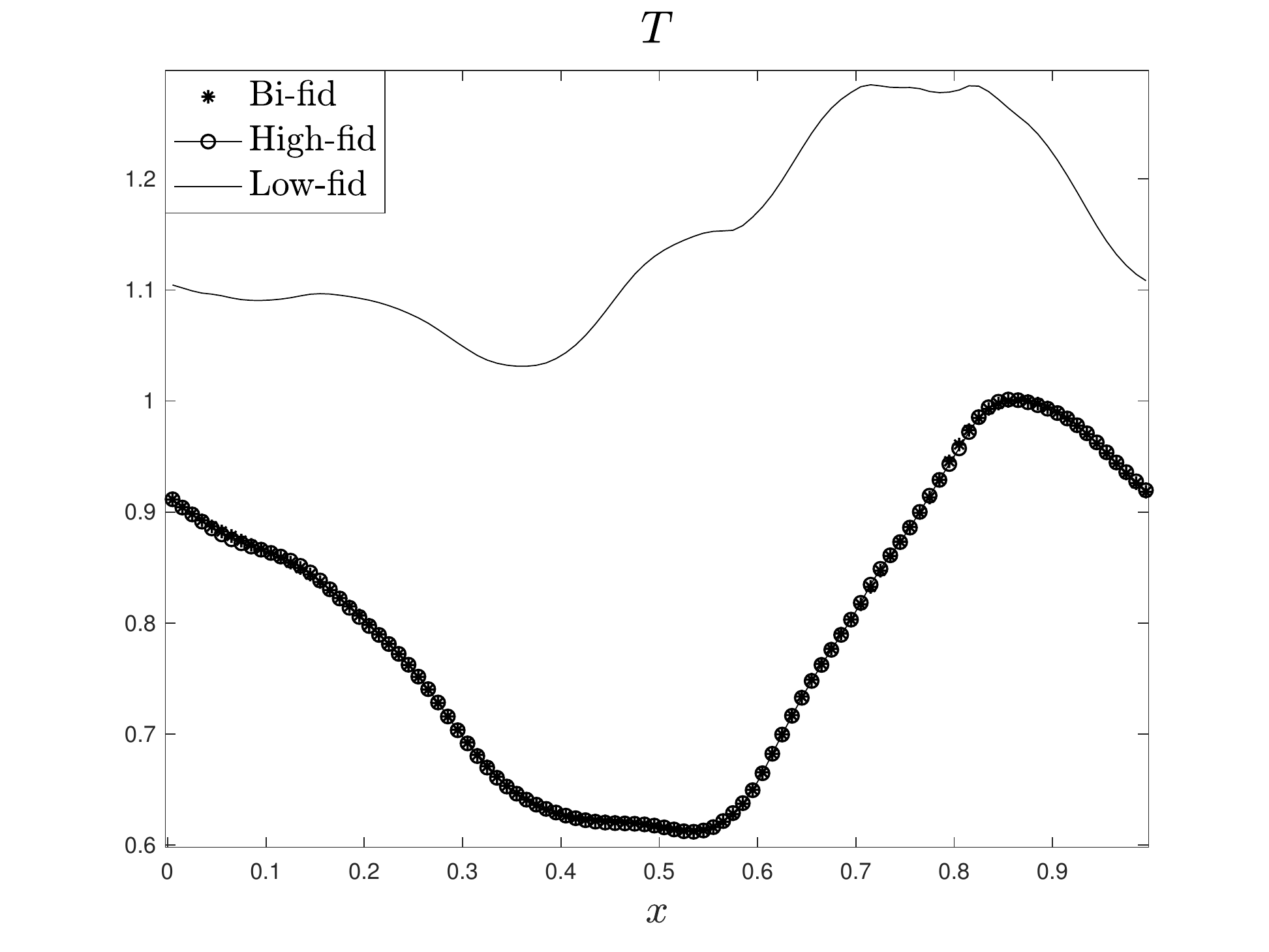}
\caption{(Left) The mean $L^2$ errors between high-fidelity and 
low- or bi-fidelity solutions with respect to the number of high-fidelity run; (Right) Comparison of the low-fidelity solution ($N_v^l=8$), high-fidelity solutions ($N_v^l=16$), and the corresponding bi-fidelity approximations $r=25$ for a fixed $z$. 
}
\label{Fig5-conv}
\end{figure}

\begin{figure}[H]
\centering
\includegraphics[width=0.45\linewidth]{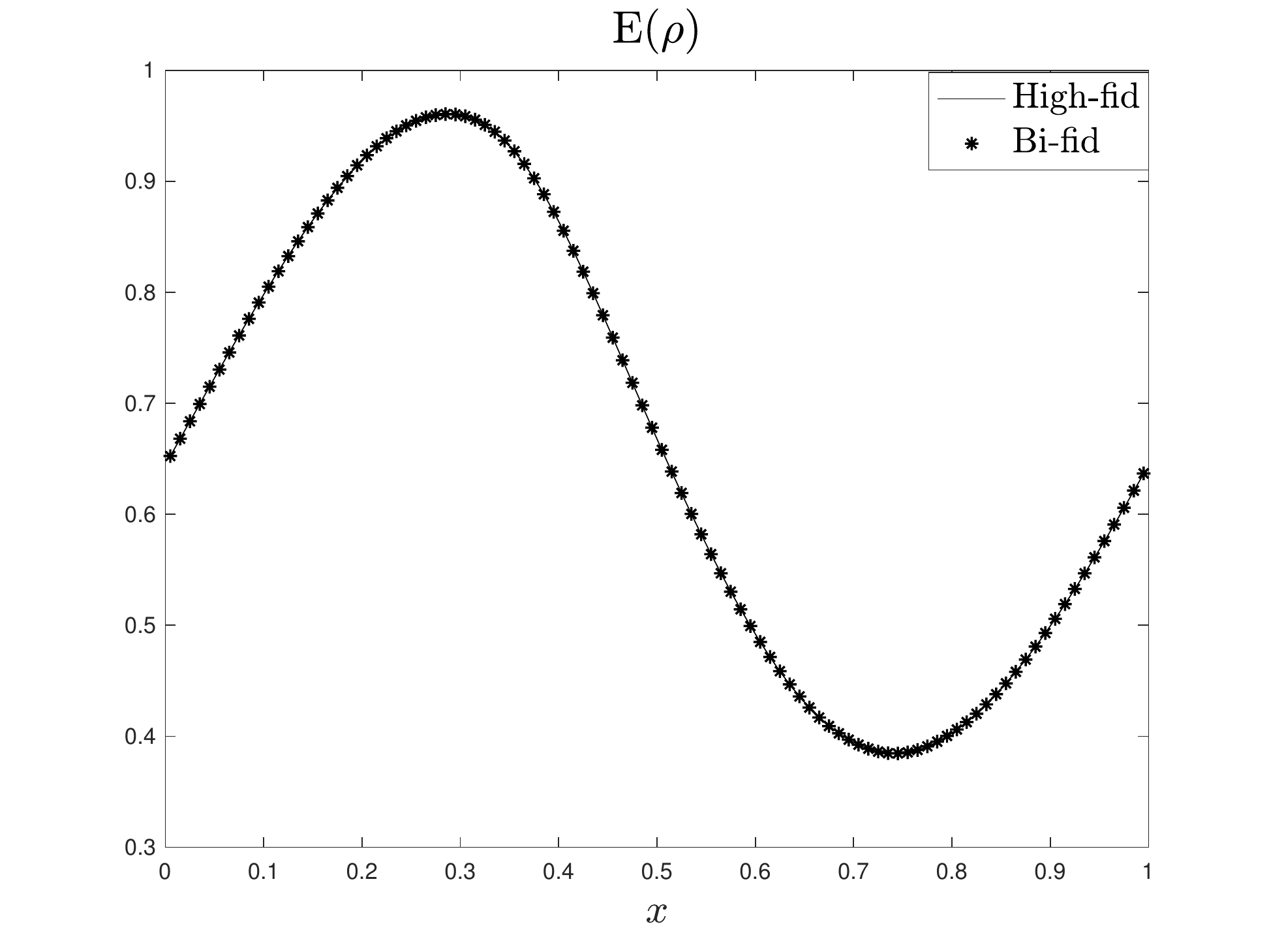}   
\includegraphics[width=0.45\linewidth]{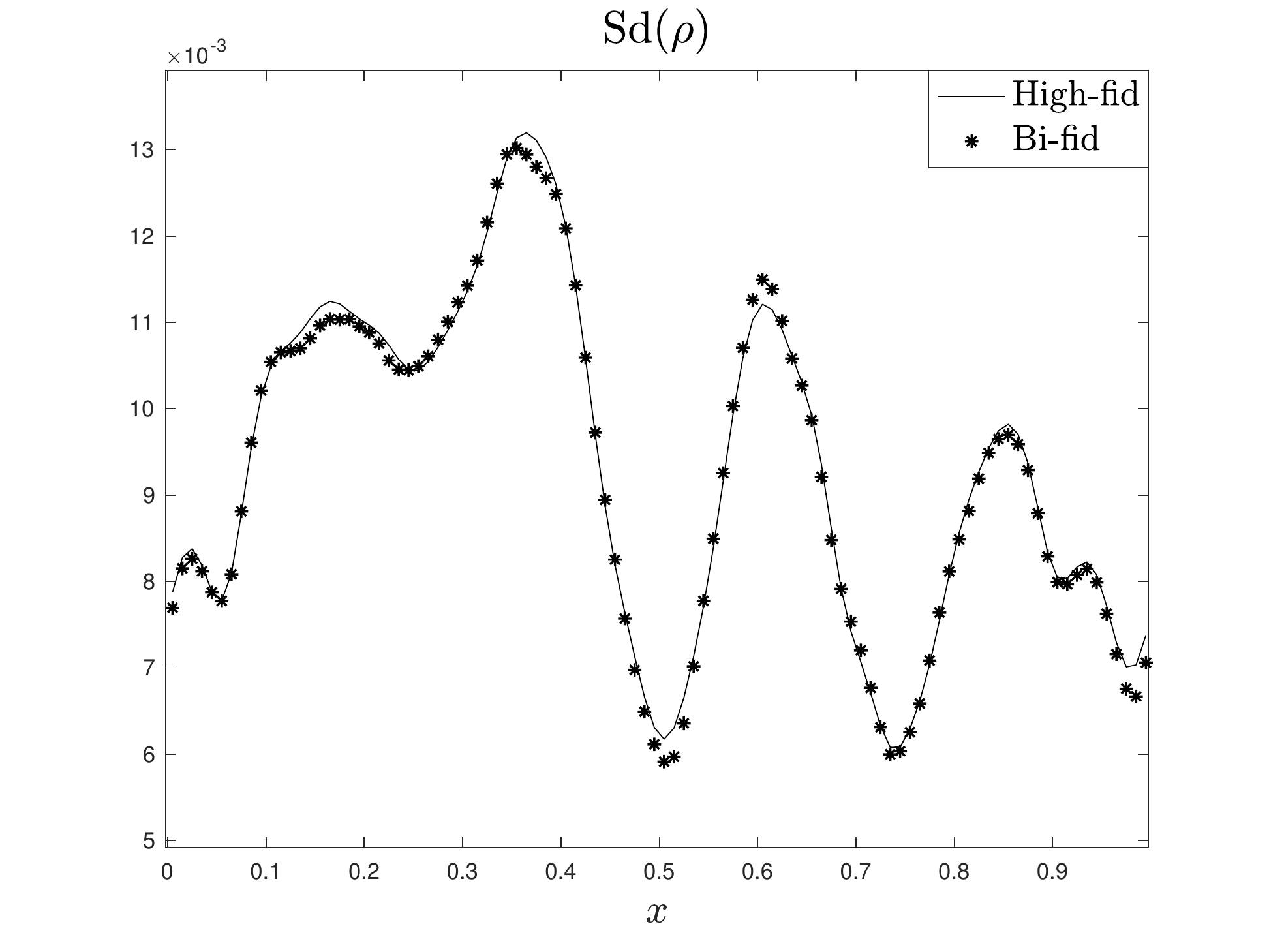}
\includegraphics[width=0.45\linewidth]{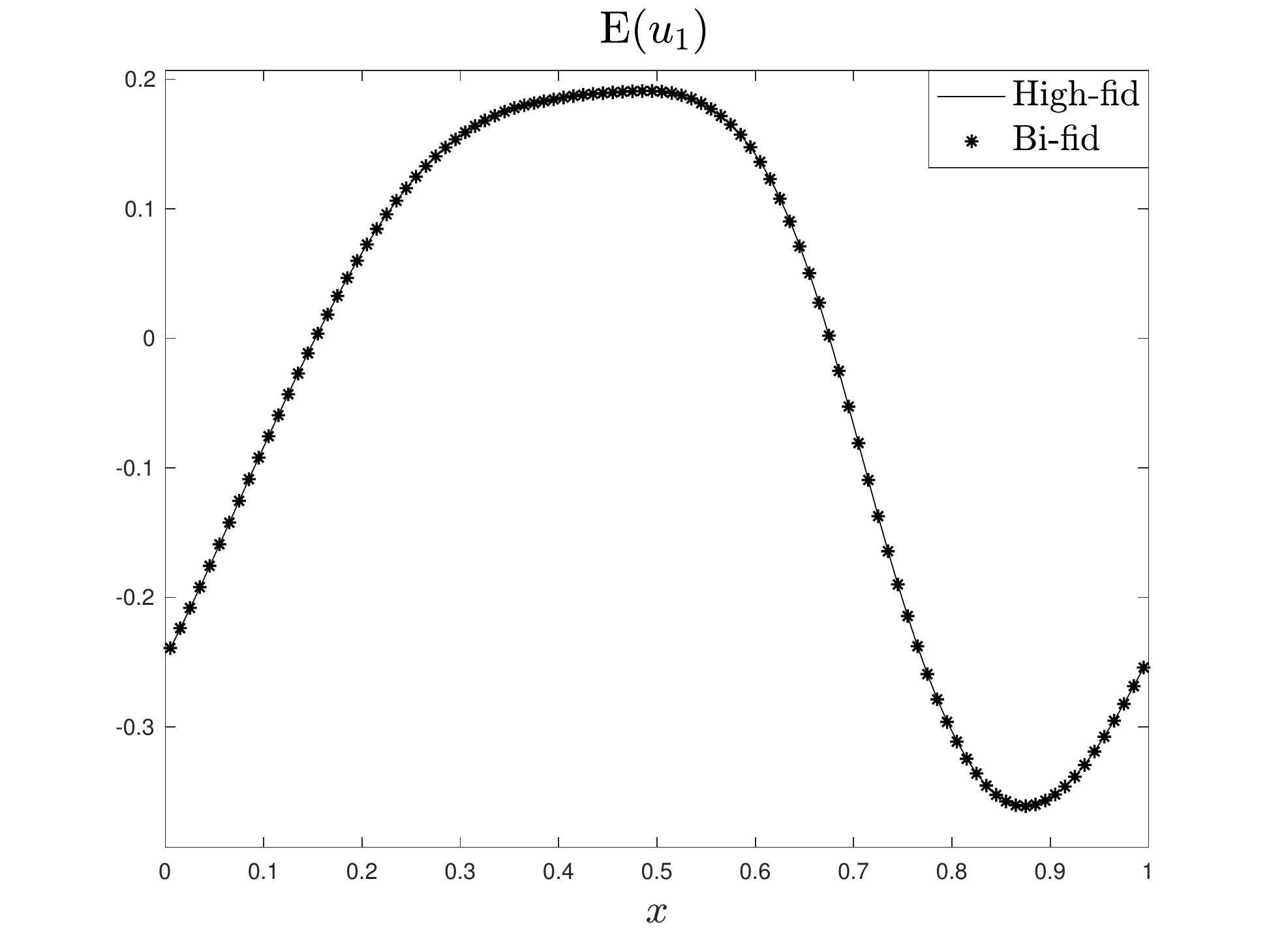}
\includegraphics[width=0.45\linewidth]{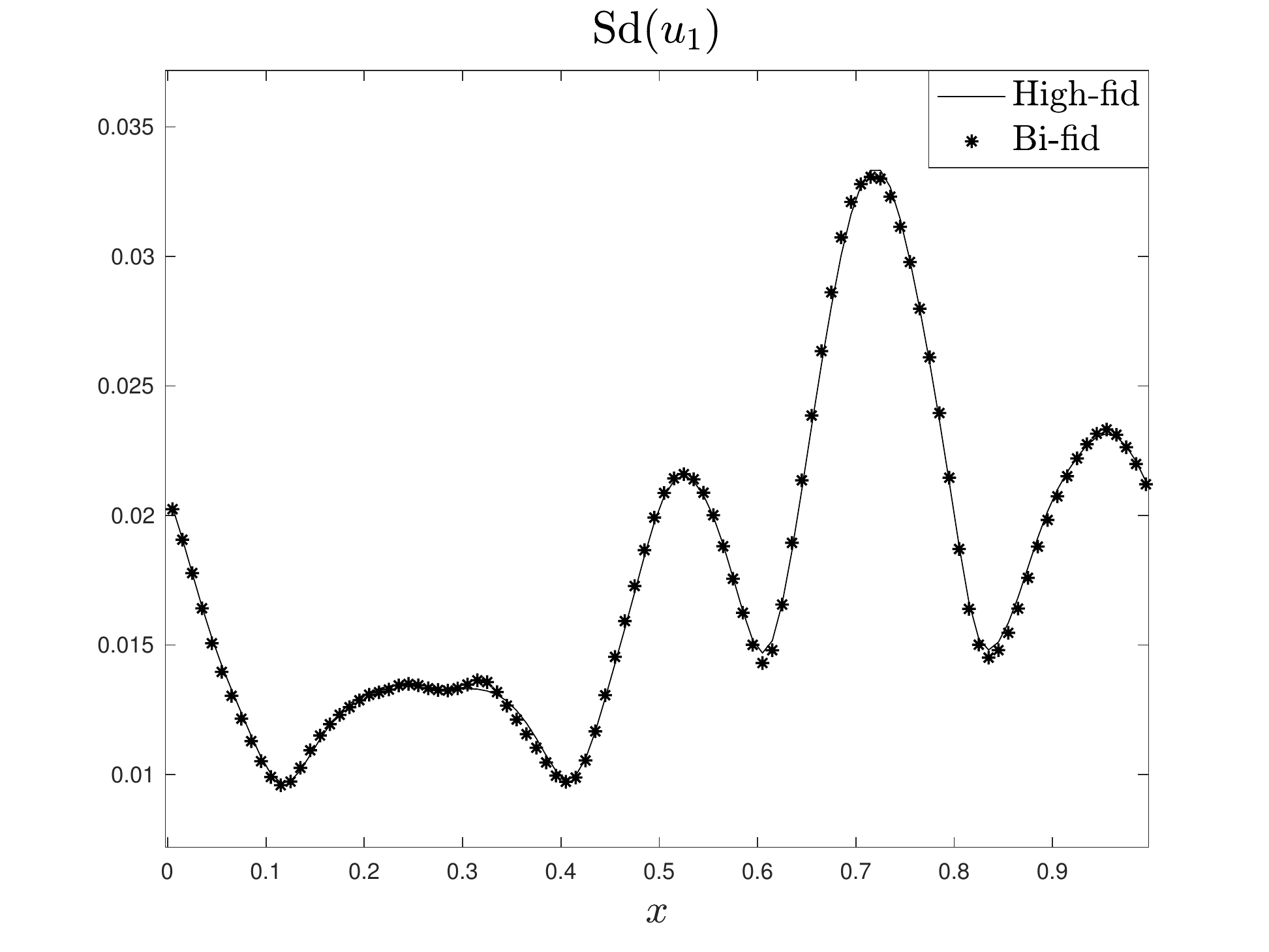}
\includegraphics[width=0.45\linewidth]{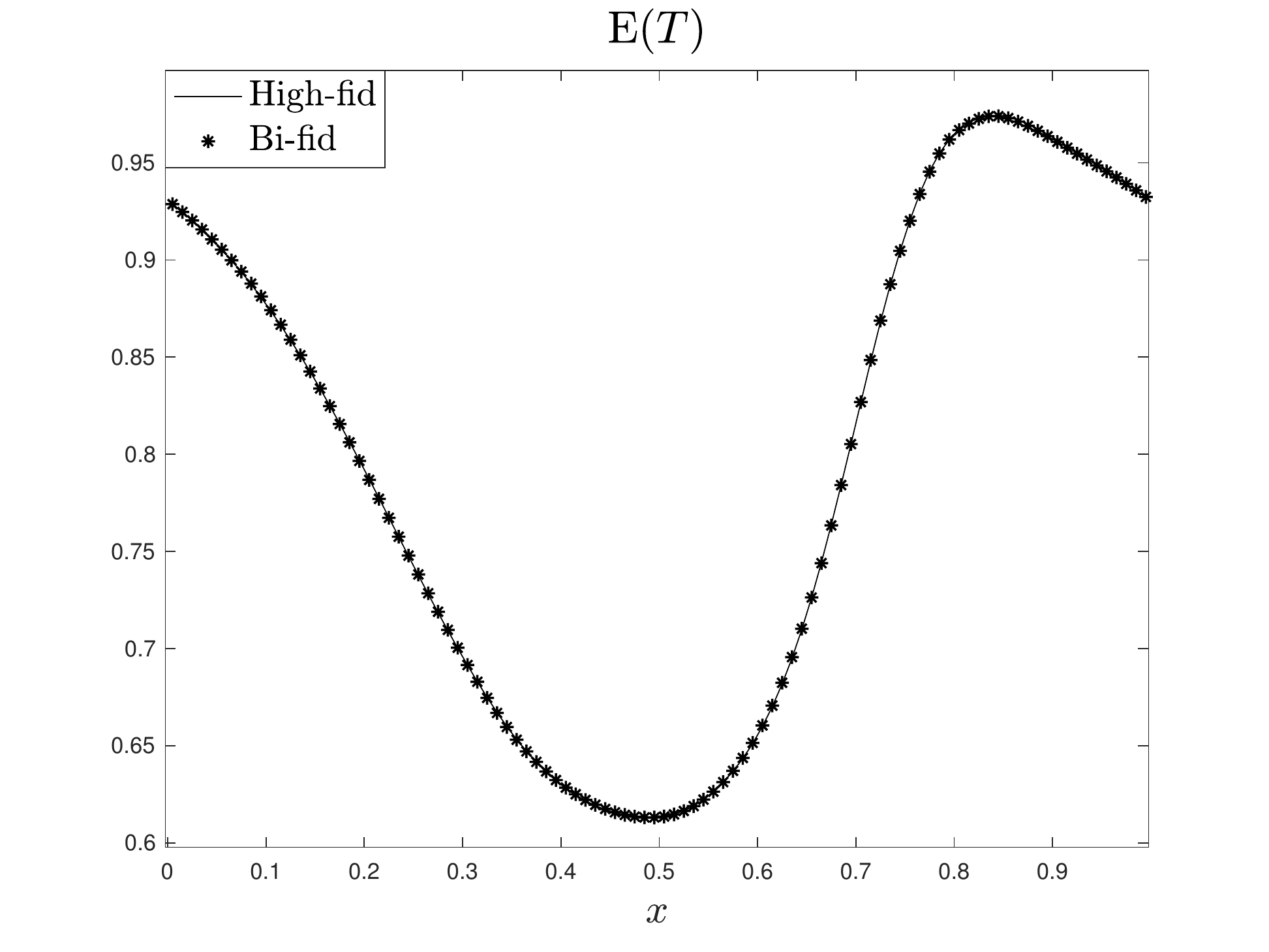}
\includegraphics[width=0.45\linewidth]{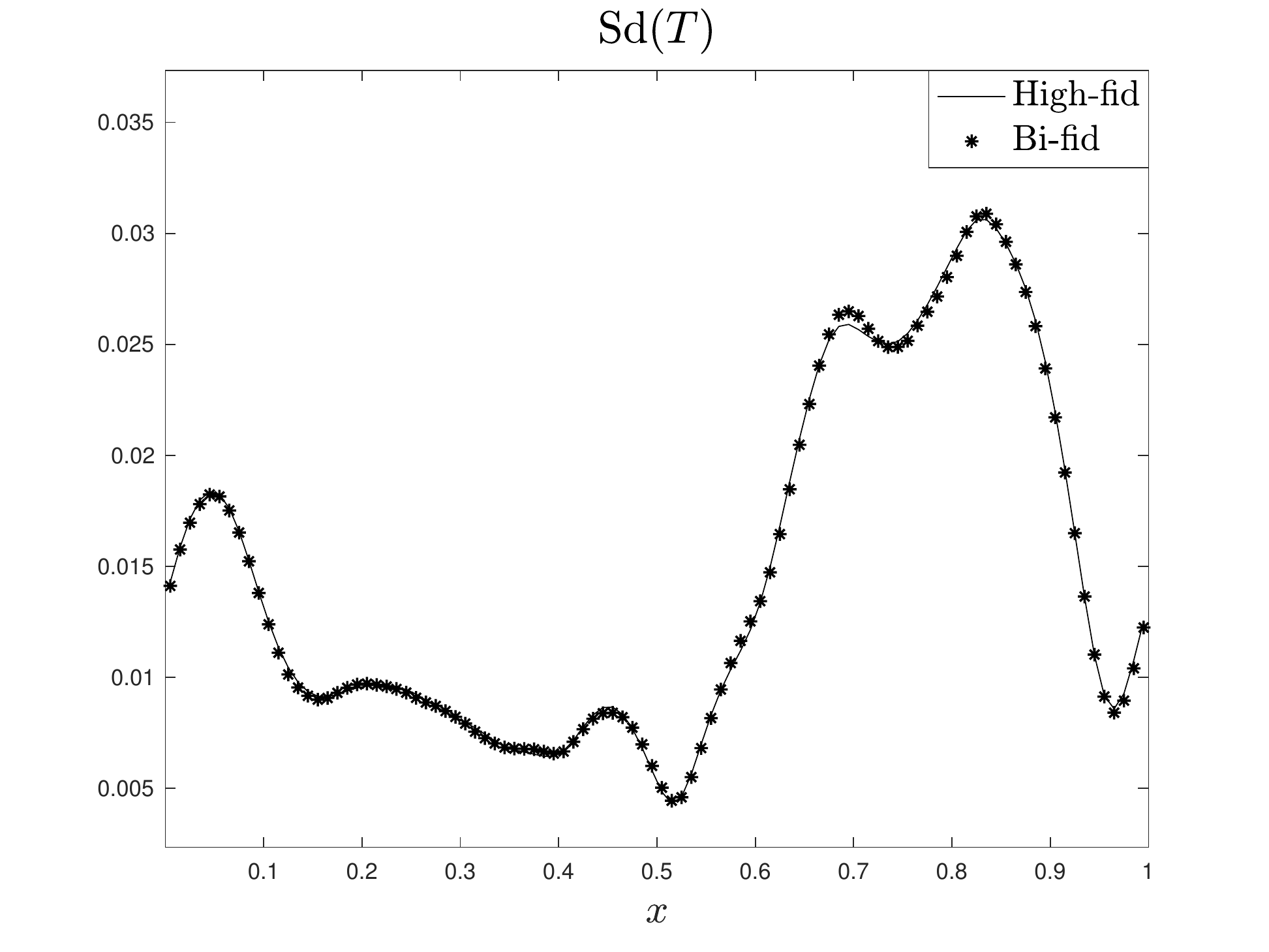}
\caption{The mean and standard deviation of $\rho$, $u_1$, $T$ of high-fidelity solutions and bi-fidelity solutions with $r=25$. 
}
\label{Fig5-mv}
\end{figure}

\section{Conclusion}
\label{sec:5}

In this work, we study the multiscale Boltzmann equation with multi-dimensional random parameters by a bi-fidelity collocation method \cite{NGX14, ZNX14}. By choosing the low-fidelity solution as the solution of the corresponding first order macro-model 
-- the compressible Euler equations with a consistent initial data, our bi-fidelity approximation can capture well the variations of macroscopic quantities computed from the high-fidelity AP solver of the Boltzmann equation with multiple scales,
at a much reduced computational cost and memory footprint. An error analysis developed in \cite{NGX14} has been extended by incorporating the knowledge of
the regularity of our high-fidelity and low-fidelity solutions. The computational accuracy and efficiency are demonstrated in various numerical examples and holds promise to accelerate the computation
for more complex problems in multiscale kinetic equations with uncertainty. 

\begin{appendices}
\section{The Lemma proof}
\begin{lemma}
\label{LM}
Let $z_{\ast}^H$, $z_{\ast}^L$ be the maximizers of (\ref{zH}) and (\ref{zL}), respectively. 
One assumes that 

1. $\exists$ positive constants $K^H$, $K^L$, $C_1$, $C_2$ such that 
$$\left| ||u^L(z_{\ast}^H)||^L - K^H ||u^H(z_{\ast}^H)||^H \right| \leq C_1\,\e, $$ 
$$\left| ||u^H(z_{\ast}^L)||^H - K^L ||u^L(z_{\ast}^L)||^L\right| \leq C_2\,\e, $$

2. $\exists\, \eta^L\leq 1$ and $\eta^H \leq 1$ such that 
$$ || P_{U_n^L}u^L(z_{\ast}^H)||^L \leq \eta^H ||P_{U_n^H}u^H(z_{\ast}^H)||^H, $$
$$ || P_{U_n^H} u^H(z_{\ast}^L)||^H \leq \eta^L ||P_{U_n^L}u^L(z_{\ast}^L)||^L. $$ 

3. $\exists$ constants $A^H$ and $A^L$ satisfying 
$$ 1 \leq A^H < \frac{\eta^H}{\max\{0, \eta^H - K^H\}}, \qquad
1\leq A^L < \frac{\eta^L}{\max\{0, \eta^L - K^L\}} $$
such that 
$$ ||u^H(z_{\ast}^H)||^H \leq A^H d^H(u^H(z_{\ast}^H), U_n^H), $$ 
$$ ||u^L(z_{\ast}^L)||^L \leq A^L d^L(u^L(z_{\ast}^L), U_n^L). $$
Then $$ d^H(u^H(z_{\ast}^L), U_n^H) \geq (\delta_1 - \delta_2\,\e)\, d^H (u^H(z_{\ast}^H), U_n^H), $$ 
with $0< \delta_1 - \delta_2\,\e <1$. 
\end{lemma}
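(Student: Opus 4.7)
The plan is to mirror the greedy-comparison argument of \cite[Lemma 4.1]{NGX14}, carrying through Assumption 1 the $\varepsilon$-perturbation that separates the high- and low-fidelity norms, and then exploiting the greedy maximality of $z_*^L$ in the low-fidelity metric to transport information back to the high-fidelity side.

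First I would use the Pythagorean identity $d^L(u^L(z_*^H), U_n^L)^2 = \|u^L(z_*^H)\|_L^2 - \|P_{U_n^L}u^L(z_*^H)\|_L^2$ and apply Assumption 2 to replace the low-fidelity projection by its high-fidelity counterpart (with factor $\eta^H$). This gives
\[
d^L(u^L(z_*^H), U_n^L)^2 \geq \|u^L(z_*^H)\|_L^2 - (\eta^H)^2\bigl(\|u^H(z_*^H)\|_H^2 - d_H^2\bigr),
\]
where $d_H := d^H(u^H(z_*^H), U_n^H)$. Expanding the lower bound $\|u^L(z_*^H)\|_L \geq K^H\|u^H(z_*^H)\|_H - C_1\varepsilon$ from Assumption 1 in square form yields a term $\bigl((K^H)^2-(\eta^H)^2\bigr)\|u^H(z_*^H)\|^2$ together with a cross term of order $\varepsilon\|u^H(z_*^H)\|$. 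Using Assumption 3 to bound $\|u^H(z_*^H)\|_H \leq A^H d_H$ (and performing a case analysis on the sign of $(K^H)^2-(\eta^H)^2$, where the conditions $A^H < \eta^H/(\eta^H-K^H)$ and $1\leq A^H$ ensure the $d_H^2$-coefficient remains positive after absorbing the quadratic term), completing the square produces a bound of the form $d^L(u^L(z_*^H),U_n^L) \geq \alpha^H d_H - \beta^H\varepsilon$ with explicit constants.

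Next I would invoke the greedy optimality of $z_*^L$, which by (\ref{zL}) gives $d^L(u^L(z_*^L), U_n^L) \geq d^L(u^L(z_*^H), U_n^L)$. Then I would carry out the symmetric manipulation starting from $d^H(u^H(z_*^L), U_n^H)^2$, using the second halves of Assumptions 1, 2, 3 to obtain an inequality of the form $d^H(u^H(z_*^L), U_n^H) \geq \alpha^L\, d^L(u^L(z_*^L),U_n^L) - \beta^L\varepsilon$. Chaining the three inequalities gives
\[
d^H(u^H(z_*^L), U_n^H) \geq \alpha^L\alpha^H\, d_H - (\alpha^L\beta^H+\beta^L)\,\varepsilon,
\]
and identifying $\delta_1 = \alpha^L\alpha^H$ together with a constant $\delta_2$ (absorbing the additive $\varepsilon$-term against $d_H$ as in the normalization used in \cite{NGX14}) delivers the desired $(\delta_1-\delta_2\varepsilon)d_H$ form; the bound $\delta_1 \leq 1$ follows from $\eta^L,\eta^H \leq 1$ in Assumption 2, while $\delta_1-\delta_2\varepsilon>0$ is the small-$\varepsilon$ regime.

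The main obstacle is the case split driven by the sign of $(K^\bullet)^2-(\eta^\bullet)^2$: when $K^\bullet < \eta^\bullet$ one must use Assumption 3 not only as a lower projection-to-distance bound but also to control $\|u^\bullet\|$ from above, and verify that the bracket $\bigl((K^\bullet)^2(A^\bullet)^2 + (\eta^\bullet)^2(1-(A^\bullet)^2)\bigr)$ stays strictly positive, which is precisely what the condition $A^\bullet < \eta^\bullet/(\eta^\bullet - K^\bullet)$ is designed (in a slightly weakened form) to guarantee. A secondary obstacle is cleanly handling the additive $\varepsilon$-cross terms so that they combine into a single factor $\delta_2\varepsilon$ multiplying $d_H$, which effectively requires working under the regime where $d_H$ is not pathologically small — a standing assumption already implicit in the greedy construction, since otherwise $U_n^H$ already approximates $u^H$ to accuracy $d_H$ and there is nothing to prove.
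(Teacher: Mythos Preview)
Your overall architecture matches the paper's exactly: bound $d^L(u^L(z_*^H),U_n^L)$ below by a multiple of $d^H(u^H(z_*^H),U_n^H)$ minus an $\varepsilon$-term, invoke the greedy optimality of $z_*^L$, then run the symmetric estimate and chain. The difference is in the first technical step. The paper does \emph{not} use the Pythagorean identity; it works linearly, starting from the elementary bound
\[
d^L(u^L(z_*^H),U_n^L)\;\geq\;\|u^L(z_*^H)\|^L-\|P_{U_n^L}u^L(z_*^H)\|^L,
\]
then applies Assumptions 1 and 2 directly, adds and subtracts $\eta^H\|u^H(z_*^H)\|^H$, and absorbs the resulting $(K^H-\eta^H)\|u^H(z_*^H)\|^H$ term via Assumption 3. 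This produces the clean factor $\eta^H-\max\{0,\eta^H-K^H\}A^H$ with no squares or case splits, and the product of the two such factors is exactly $\delta_1$.

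Your squared-norm route introduces a genuine mismatch with the hypotheses. Carrying through Pythagoras you need positivity of the bracket $(\eta^H)^2 - (A^H)^2\bigl((\eta^H)^2-(K^H)^2\bigr)$ in the regime $\eta^H>K^H$, i.e.\ $A^H<\eta^H/\sqrt{(\eta^H)^2-(K^H)^2}$. But Assumption 3 only grants $A^H<\eta^H/(\eta^H-K^H)$, and since $(\eta^H-K^H)\leq\sqrt{(\eta^H)^2-(K^H)^2}$ whenever $K^H\leq\eta^H$, the given bound is \emph{weaker} than what your argument requires. So for admissible $A^H$ in the gap between these two thresholds your bracket can vanish or go negative, and the completing-the-square step fails. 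You flagged this yourself (``in a slightly weakened form''), but it is not merely cosmetic: the linear condition in Assumption 3 is calibrated precisely to the paper's linear manipulation, and switching to squares breaks it. The fix is simply to drop Pythagoras and use the reverse triangle-type bound $d\geq\|u\|-\|Pu\|$ as the paper does; everything else in your outline then goes through verbatim.
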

\begin{proof}
Using the above assumptions, one gets
\begin{align*}
d^L(u^L(z_{\ast}^H), U_n^L) & = ||u^L(z_{\ast}^H)||^L - ||P_{U_n^L}u^L(z_{\ast}^H)||^L \\[4pt]
& \geq  K^H ||u^H(z_{\ast}^H)||^H - C_1\,\e - ||P_{U_n^L}u^L(z_{\ast}^H)||^L \\[4pt]
& \geq  K^H ||u^H(z_{\ast}^H)||^H  - C_1\,\e  - \eta^H ||P_{U_n^H}u^H(z_{\ast}^H)||^H \\[4pt]
& = \eta^H \left[ ||u^H(z_{\ast}^H)||^H - ||P_{U_n^H}u^H(z_{\ast}^H)||^H\right]  
-  C_1\,\e + (K^H - \eta^H) ||u^H(z_{\ast}^H)||^H \\[4pt]
& \geq \eta^H d^H(u^H(z_{\ast}^H), U_n^H) + C_1\,\e - \max\{0, \eta^H - K^H\} A^H d^H(u^H(z_{\ast}^H), U_n^H) \\[4pt]
& = \left[\eta^H -  \max\{0, \eta^H - K^H\} A^H \right] d^H(u^H(z_{\ast}^H), U_n^H) -  C_1\,\e, 
\end{align*}
Similarly, we have
$$ d^H(u^H(z_{\ast}^L), U_n^H) \geq \left[ \eta^L - \max\{0, \eta^L - K^L\}A^L\right] d^L(u^L(z_{\ast}^L), U_n^L) - C_2\,\e. $$ 
Therefore, 
\begin{align*}
d^H(u^H(z_{\ast}^L), U_n^H) & \geq \left[ \eta^L - \max\{0, \eta^L - K^L\}A^L\right] d^L(u^L(z_{\ast}^L), U_n^L) - C_2\,\e\\[4pt]
& \geq \left[ \eta^L - \max\{0, \eta^L - K^L\}A^L\right] d^L(u^L(z_{\ast}^H), U_n^L) -  C_2\,\e \\[4pt]
& \geq \left[ \eta^L - \max\{0, \eta^L - K^L\}A^L\right] \cdot \left[\eta^H -  \max\{0, \eta^H - K^H\} A^H \right] 
d^H(u^H(z_{\ast}^H), U_n^H)  \\[4pt]
&\quad  - \left\{\left[ \eta^L - \max\{0, \eta^L - K^L\}A^L\right] C_1 + C_2 \right\} \e \\[4pt]
& \geq (\delta_1 - \delta_2\,\e)\, d^H(u^H(z_{\ast}^H), U_n^H), 
\end{align*}
where we assume that
$$0< \delta_1 = \left[ \eta^L - \max\{0, \eta^L - K^L\}A^L\right] \cdot \left[\eta^H - \max\{0, \eta^H - K^H\} A^H \right] <1, $$ 
and $\exists\, \delta_2$ such that $0< \delta_1 - \delta_2\,\e <1$ with 
$$ \left[ \eta^L - \max\{0, \eta^L - K^L\}A^L\right] C_1 + C_2 \leq \delta_2\, d^H (u^H(z_{\ast}^H), U_n^H). $$
\end{proof}

\section{{\color{black} Proof of analyticity for the linearized equation }}
For simplicity, consider the linearized Boltzmann equation under the acoustic scaling. 
The perturbative solution satisfies 
\begin{equation}\label{h-App} \partial_t h + v \cdot \nabla_x h = \frac{1}{\e} \mathcal L(h). \end{equation}
For simplicity, we write out the proof for one-dimensional random variable $z$. 
Assume the collision cross-section has the form
$ b(\cos\theta) = b_0(\cos\theta) + b_1(\cos\theta)\, z$, and we require $|b_1|\leq C_b$. 
Define the operator $\mathcal T_{\e}:= \frac{1}{\e}\mathcal L - v\cdot\nabla_x$, and the notation $h_l := \partial_z^l h$ for all $l\geq 0$. 
Take $\partial_z^l$ of the equation \eqref{h-App}, then 
$$ \partial_t h_l = \mathcal T_{\e}(h_l) + \frac{1}{\e} \mathcal L_{b_1}(h_{l-1}), $$
where $\mathcal L_{b_1}$ is defined by substituting $b(\cos\theta)$ by $b_1(\cos\theta)$ in the linearized collision operator $\mathcal L$. 
Thus
$$ \frac{d}{dt} ||h_l||_{H^k}^2 = 2 \langle \mathcal T_{\e}(h_l), h_l \rangle_{H^k} + \frac{2}{\e} l \, \langle \mathcal L_{b_1}(h_{l-1}), h_l \rangle_{H^k}. $$
By the hypocoercivity theory and $\mathcal L$ being a bounded operator, 
$$  \frac{d}{dt} ||h_l||_{H^k}^2  \leq - \e C_0\, ||h_l||_{H_{\Lambda}^k}^2 + \frac{C_1}{\e} l \, ||h_{l-1}||_{H^k} ||h_l||_{H^k}, $$
where 
$\|h\|_{H_{\Lambda}^k}^2=\sum_{|j|+|l|=k}\left\|\frac{\partial^2}{\partial v_j \partial x_l}h\right\|_{\Lambda}^2$ 
and $\|h\|_{\Lambda}=\left\|h(1+|v|)^{\gamma / 2}\right\|_{L^{2}_{x,v}}$, with $\gamma$ shown in
\eqref{Kernel}. Since $ ||h_l||_{H_{\Lambda}^k}$ controls $||h_l||_{H^k}$, then by setting $g_l = ||h_l||_{H^k}$, 
$$ \frac{d}{dt}g_l \leq -\e C_0\, g_l + \frac{C_1}{\e} l\, g_{l-1}. $$
Using \cite[Lemma 6]{Qin-Li} in a similar way, one gets
$$ g_l(t) \leq e^{-\e C_0 t} \sum_{m=0}^l \frac{l!}{(l-m)!\, m!}\left(\frac{C_1}{\e}t\right)^m\, h_{l-m}(0). $$
 Following \cite[Theorem 5]{Qin-Li}, if the initial data satisfies 
 $ ||\partial_z^l h_0(z) ||_{H^k} \leq R^l$, $\forall l \geq 0$, 
 then at time $t$, 
 $$ ||\partial_z^l h(z)||_{H^k} \leq C e^{-\e C_0 t}\left( R + \frac{C_1}{\e} t \right)^l, $$
 where $C$, $C_0$, $C_1$ are independent of $l$ and $\e$. 
  The convergence radius for $h$ at any point $z_0$ is defined by
 $$r\left(z_{0}\right)=\frac{1}{\limsup _{l \rightarrow \infty}\left(g_{l}(z_{0})/\, l !\right)^{1 / l}}=\infty, $$
 which is independent of $z_0$, thus $h$ (or $f$) is {\it analytic} in $z$. 
\end{appendices}

\bibliographystyle{siam}
\bibliography{MultiFid.bib}

\begin{thebibliography}{10}

\bibitem{Babuska}
{\sc I.~Babu\v{s}ka, F.~Nobile, and R.~Tempone}, {\em A stochastic collocation
  method for elliptic partial differential equations with random input data},
  SIAM Rev., 52 (2010), pp.~317--355.

\bibitem{Bardos93}
{\sc C.~Bardos, F.~Golse, and C.~D. Levermore}, {\em Fluid dynamic limits of
  kinetic equations. {II}. {C}onvergence proofs for the {B}oltzmann equation},
  Comm. Pure Appl. Math., 46 (1993), pp.~667--753.

\bibitem{Golse2}
\leavevmode\vrule height 2pt depth -1.6pt width 23pt, {\em Fluid dynamic limits
  of kinetic equations. {II}. {C}onvergence proofs for the {B}oltzmann
  equation}, Comm. Pure Appl. Math., 46 (1993), pp.~667--753.

\bibitem{Golse1}
{\sc C.~Bardos, F.~Golse, and D.~Levermore}, {\em Fluid dynamic limits of
  kinetic equations. {I}. {F}ormal derivations}, J. Statist. Phys., 63 (1991),
  pp.~323--344.

\bibitem{Lemou-NS}
{\sc M.~Bennoune, M.~Lemou, and L.~Mieussens}, {\em Uniformly stable numerical
  schemes for the {B}oltzmann equation preserving the compressible
  {N}avier-{S}tokes asymptotics}, J. Comput. Phys., 227 (2008), pp.~3781--3803.

\bibitem{Schwab}
{\sc M.~Bieri and C.~Schwab}, {\em Sparse high order {FEM} for elliptic
  s{PDE}s}, Comput. Methods Appl. Mech. Engrg., 198 (2009), pp.~1149--1170.

\bibitem{Bird}
{\sc G.~A. Bird}, {\em Molecular gas dynamics and the direct simulation of gas
  flows}, vol.~42 of Oxford Engineering Science Series, The Clarendon Press,
  Oxford University Press, New York, 1995.
\newblock Corrected reprint of the 1994 original, With 1 IBM-PC floppy disk
  (3.5 inch; DD), Oxford Science Publications.

\bibitem{BGP}
{\sc F.~Bouchut, F.~Golse, and M.~Pulvirenti}, {\em {Kinetic equations and
  asymptotic theory}}, Elsevier, 2000.

\bibitem{MB15}
{\sc M.~Briant}, {\em From the {B}oltzmann equation to the incompressible
  {N}avier-{S}tokes equations on the torus: a quantitative error estimate}, J.
  Differential Equations, 259 (2015), pp.~6072--6141.

\bibitem{Russel}
{\sc R.~E. Caflisch}, {\em The fluid dynamic limit of the nonlinear {B}oltzmann
  equation}, Comm. Pure Appl. Math., 33 (1980), pp.~651--666.

\bibitem{Cercignani-Book}
{\sc C.~Cercignani}, {\em {The Boltzmann equation in the whole space}}, in {The
  Boltzmann Equation and Its Applications}, Springer, 1988, pp.~40--103.

\bibitem{Cercignani}
{\sc C.~Cercignani}, {\em Rarefied Gas Dynamics: From Basic Concepts to Actual
  Calculations}, Cambridge University Press, Cambridge, 2000.

\bibitem{Chapman}
{\sc S.~Chapman and T.~G. Cowling}, {\em The mathematical theory of non-uniform
  gases. {A}n account of the kinetic theory of viscosity, thermal conduction
  and diffusion in gases}, Third edition, prepared in co-operation with D.
  Burnett, Cambridge University Press, London, 1970.

\bibitem{cliffe2011multilevel}
{\sc K.~A. Cliffe, M.~B. Giles, R.~Scheichl, and A.~L. Teckentrup}, {\em
  Multilevel {M}onte {C}arlo methods and applications to elliptic {PDE}s with
  random coefficients}, Computing and Visualization in Science, 14 (2011),
  p.~3.

\bibitem{Cohen15}
{\sc A.~Cohen and R.~DeVore}, {\em Approximation of high-dimensional parametric
  {PDE}s}, Acta Numer., 24 (2015), pp.~1--159.

\bibitem{Perthame}
{\sc F.~Coron and B.~Perthame}, {\em Numerical passage from kinetic to fluid
  equations}, SIAM J. Numer. Anal., 28 (1991), pp.~26--42.

\bibitem{Esther}
{\sc E.~S. Daus, S.~Jin, and L.~Liu}, {\em Spectral convergence of the
  stochastic galerkin approximation to the boltzmann equation with multiple
  scales and large random perturbation in the collision kernel}, Kinetic and
  Related Models, 12 (2019).

\bibitem{DJM}
{\sc P.~Degond, S.~Jin, and L.~Mieussens}, {\em A smooth transition model
  between kinetic and hydrodynamic equations}, J. Comput. Phys., 209 (2005),
  pp.~665--694.

\bibitem{DPW13}
{\sc R.~DeVore, G.~Petrova, and P.~Wojtaszczyk}, {\em Greedy algorithms for
  reduced bases in {B}anach spaces}, Constr. Approx., 37 (2013), pp.~455--466.

\bibitem{DP11}
{\sc G.~Dimarco and L.~Pareschi}, {\em Exponential {R}unge-{K}utta methods for
  stiff kinetic equations}, SIAM J. Numer. Anal., 49 (2011), pp.~2057--2077.

\bibitem{GL14}
\leavevmode\vrule height 2pt depth -1.6pt width 23pt, {\em Numerical methods
  for kinetic equations}, Acta Numerica, 23 (2014), pp.~369--520.

\bibitem{dimarco2019multi}
\leavevmode\vrule height 2pt depth -1.6pt width 23pt, {\em Multi-scale control
  variate methods for uncertainty quantification in kinetic equations}, J.
  Comput. Phys., 388 (2019), pp.~63--89.

\bibitem{DPZ-Review}
{\sc G.~Dimarco, L.~Pareschi, and M.~Zanella}, {\em Uncertainty quantification
  for kinetic models in socio-economic and life sciences}, in Uncertainty
  quantification for hyperbolic and kinetic equations, vol.~14 of SEMA SIMAI
  Springer Ser., Springer, Cham, 2017, pp.~151--191.

\bibitem{DP-Lions}
{\sc R.~J. DiPerna and P.-L. Lions}, {\em On the {C}auchy problem for
  {B}oltzmann equations: global existence and weak stability}, Ann. of Math.
  (2), 130 (1989), pp.~321--366.

\bibitem{eldred2017multifidelity}
{\sc M.~S. Eldred, L.~W. Ng, M.~F. Barone, and S.~P. Domino}, {\em
  Multifidelity uncertainty quantification using spectral stochastic
  discrepancy models}, Handbook of Uncertainty Quantification,  (2017),
  pp.~991--1036.

\bibitem{FJ}
{\sc F.~Filbet and S.~Jin}, {\em A class of asymptotic-preserving schemes for
  kinetic equations and related problems with stiff sources}, J. Comput. Phys.,
  229 (2010), pp.~7625--7648.

\bibitem{Gamba-Hu}
{\sc I.~M. Gamba, J.~R. Haack, C.~D. Hauck, and J.~Hu}, {\em A fast spectral
  method for the {B}oltzmann collision operator with general collision
  kernels}, SIAM J. Sci. Comput., 39 (2017), pp.~B658--B674.

\bibitem{GJL-MM}
{\sc I.~M. Gamba, S.~Jin, and L.~Liu}, {\em Micro-macro decomposition based
  asymptotic-preserving numerical schemes and numerical moments conservation
  for collisional nonlinear kinetic equations}, J. Comput. Phys., 382 (2019),
  pp.~264--290.

\bibitem{Gamba-Harsha}
{\sc I.~M. Gamba and S.~H. Tharkabhushanam}, {\em Spectral-{L}agrangian methods
  for collisional models of non-equilibrium statistical states}, J. Comput.
  Phys., 228 (2009), pp.~2012--2036.

\bibitem{giles2008multilevel}
{\sc M.~B. Giles}, {\em Multilevel {M}onte {C}arlo path simulation}, Operations
  Research, 56 (2008), pp.~607--617.

\bibitem{GWZ14}
{\sc M.~D. Gunzburger, C.~G. Webster, and G.~Zhang}, {\em Stochastic finite
  element methods for partial differential equations with random input data},
  Acta Numer., 23 (2014), pp.~521--650.

\bibitem{Guo}
{\sc Y.~Guo, J.~Jang, and N.~Jiang}, {\em Local {H}ilbert expansion for the
  {B}oltzmann equation}, Kinet. Relat. Models, 2 (2009), pp.~205--214.

\bibitem{hampton2018practical}
{\sc J.~Hampton, H.~R. Fairbanks, A.~Narayan, and A.~Doostan}, {\em Practical
  error bounds for a non-intrusive bi-fidelity approach to
  parametric/stochastic model reduction}, Journal of Computational Physics, 368
  (2018), pp.~315--332.

\bibitem{HJ-Review}
{\sc J.~Hu and S.~Jin}, {\em Uncertainty quantification for kinetic equations},
  in Uncertainty quantification for hyperbolic and kinetic equations, vol.~14
  of SEMA SIMAI Springer Ser., Springer, Cham, 2017, pp.~193--229.

\bibitem{HJS-Boltz}
{\sc J.~Hu, S.~Jin, and S.~Ruiwen}, {\em On stochastic galerkin approximation
  of the nonlinear boltzmann equation with uncertainty in the fluid regime},
  preprint,  (2019).

\bibitem{Roberts}
{\sc J.~D. Jakeman and S.~G. Roberts}, {\em Stochastic {G}alerkin and
  collocation methods for quantifying uncertainty in differential equations: a
  review}, ANZIAM J., 50 (2008), pp.~C815--C830.

\bibitem{JLM}
{\sc N.~Jiang, C.~D. Levermore, and N.~Masmoudi}, {\em Remarks on the acoustic
  limit for the {B}oltzmann equation}, Comm. Partial Differential Equations, 35
  (2010), pp.~1590--1609.

\bibitem{Jin99}
{\sc S.~Jin}, {\em Efficient asymptotic-preserving ({AP}) schemes for some
  multiscale kinetic equations}, SIAM J. Sci. Comput., 21 (1999), pp.~441--454.

\bibitem{Jin12}
\leavevmode\vrule height 2pt depth -1.6pt width 23pt, {\em Asymptotic
  preserving ({AP}) schemes for multiscale kinetic and hyperbolic equations: a
  review}, Riv. Math. Univ. Parma (N.S.), 3 (2012), pp.~177--216.

\bibitem{JPT}
{\sc S.~Jin, L.~Pareschi, and G.~Toscani}, {\em Uniformly accurate diffusive
  relaxation schemes for multiscale transport equations}, SIAM J. Numer. Anal.,
  38 (2000), pp.~913--936.

\bibitem{jin2015asymptotic}
{\sc S.~Jin, D.~Xiu, and X.~Zhu}, {\em Asymptotic-preserving methods for
  hyperbolic and transport equations with random inputs and diffusive
  scalings}, Journal of Computational Physics, 289 (2015), pp.~35--52.

\bibitem{kennedy2000predicting}
{\sc M.~C. Kennedy and A.~O'Hagan}, {\em Predicting the output from a complex
  computer code when fast approximations are available}, Biometrika, 87 (2000),
  pp.~1--13.

\bibitem{Klar}
{\sc A.~Klar}, {\em An asymptotic preserving numerical scheme for kinetic
  equations in the low {M}ach number limit}, SIAM J. Numer. Anal., 36 (1999),
  pp.~1507--1527.

\bibitem{Lemou-MM}
{\sc M.~Lemou and L.~Mieussens}, {\em A new asymptotic preserving scheme based
  on micro-macro formulation for linear kinetic equations in the diffusion
  limit}, SIAM J. Sci. Comput., 31 (2008), pp.~334--368.

\bibitem{MF6}
{\sc M.~E. Leo Wai-Tsun~Ng}, {\em Multifidelity uncertainty quantification
  using non-intrusive polynomial chaos and stochastic collocation}, 53rd
  AIAA/ASME/ASCE/AHS/ASC Structures, Structural Dynamics and Materials
  Conference 20th AIAA/ASME/AHS Adaptive Structures Conference 14th AIAA,
  (2012), p.~1852.

\bibitem{LeVeque}
{\sc R.~J. LeVeque}, {\em Numerical Methods for Conservation Laws},
  Birkh\"auser, 1992.

\bibitem{Qin-Li}
{\sc Q.~Li and L.~Wang}, {\em Uniform regularity for linear kinetic equations
  with random input based on hypocoercivity}, SIAM/ASA J. Uncertain. Quantif.,
  5 (2017), pp.~1193--1219.

\bibitem{LL-LB}
{\sc L.~Liu}, {\em Uniform spectral convergence of the stochastic {G}alerkin
  method for the linear semiconductor {B}oltzmann equation with random inputs
  and diffusive scaling}, Kinet. Relat. Models, 11 (2018), pp.~1139--1156.

\bibitem{LL-BP}
\leavevmode\vrule height 2pt depth -1.6pt width 23pt, {\em A stochastic
  asymptotic-preserving scheme for the bipolar semiconductor
  {B}oltzmann-{P}oisson system with random inputs and diffusive scalings}, J.
  Comput. Phys., 376 (2019), pp.~634--659.

\bibitem{LJ-UQ}
{\sc L.~Liu and S.~Jin}, {\em Hypocoercivity based sensitivity analysis and
  spectral convergence of the stochastic {G}alerkin approximation to
  collisional kinetic equations with multiple scales and random inputs},
  Multiscale Model. Simul., 16 (2018), pp.~1085--1114.

\bibitem{TPLiu}
{\sc T.~P. Liu}, {\em Solutions in the large for the equations of nonisentropic
  gas dynamics}, Indiana Univ. Math. J., 26 (1977), pp.~147--177.

\bibitem{FastSpectral}
{\sc C.~Mouhot and L.~Pareschi}, {\em Fast algorithms for computing the
  {B}oltzmann collision operator}, Math. Comp., 75 (2006), pp.~1833--1852.

\bibitem{NGX14}
{\sc A.~Narayan, C.~Gittelson, and D.~Xiu}, {\em A stochastic collocation
  algorithm with multifidelity models}, SIAM J. Sci. Comput., 36 (2014),
  pp.~A495--A521.

\bibitem{NXiu}
{\sc A.~Narayan and D.~Xiu}, {\em Stochastic collocation methods on
  unstructured grids in high dimensions via interpolation}, SIAM J. Sci.
  Comput., 34 (2012), pp.~A1729--A1752.

\bibitem{ng2012multifidelity}
{\sc L.~W.-T. Ng and M.~Eldred}, {\em Multifidelity uncertainty quantification
  using non-intrusive polynomial chaos and stochastic collocation}, in 53rd
  AIAA/ASME/ASCE/AHS/ASC Structures, Structural Dynamics and Materials
  Conference 20th AIAA/ASME/AHS Adaptive Structures Conference 14th AIAA, 2012,
  p.~1852.

\bibitem{Webster}
{\sc F.~Nobile, R.~Tempone, and C.~G. Webster}, {\em A sparse grid stochastic
  collocation method for partial differential equations with random input
  data}, SIAM J. Numer. Anal., 46 (2008), pp.~2309--2345.

\bibitem{MF5}
{\sc P.~S. Palar, T.~Tsuchiya, and G.~T. Parks}, {\em Corrigendum to
  ``{M}ulti-fidelity non-intrusive polynomial chaos based on regression''
  [{C}omput. {M}ethods {A}ppl. {M}ech. {E}ngrg. 305 (2016) 579--606] [
  {MR}3491948]}, Comput. Methods Appl. Mech. Engrg., 307 (2016), pp.~489--490.

\bibitem{PR00}
{\sc L.~Pareschi and G.~Russo}, {\em Numerical solution of the {B}oltzmann
  equation. {I}. {S}pectrally accurate approximation of the collision
  operator}, SIAM J. Numer. Anal., 37 (2000), pp.~1217--1245.

\bibitem{peherstorfer2016multifidelity}
{\sc B.~Peherstorfer, T.~Cui, Y.~Marzouk, and K.~Willcox}, {\em Multifidelity
  importance sampling}, Computer Methods in Applied Mechanics and Engineering,
  300 (2016), pp.~490--509.

\bibitem{peherstorfer2016optimal}
{\sc B.~Peherstorfer, K.~Willcox, and M.~Gunzburger}, {\em Optimal model
  management for multifidelity monte carlo estimation}, SIAM Journal on
  Scientific Computing, 38 (2016), pp.~A3163--A3194.

\bibitem{perdikaris2017nonlinear}
{\sc P.~Perdikaris, M.~Raissi, A.~Damianou, N.~Lawrence, and G.~E.
  Karniadakis}, {\em Nonlinear information fusion algorithms for data-efficient
  multi-fidelity modelling}, Proc. R. Soc. A, 473 (2017), p.~20160751.

\bibitem{MF2}
{\sc A.~D. M. A.~S. Ryan~King, Jennifer~Annoni}, {\em Enabling predictive
  reduced order modeling of high-fidelity wind plant simulations with in-situ
  modal decomposition and basis interpolation}, Bulletin of the American
  Physical Society, 63.

\bibitem{MF1}
{\sc E.~L. P. J. A.~E. Ryan W.~Skinner, Alireza~Doostan and K.~E. Jansen}, {\em
  Reduced-basis multifidelity approach for efficient parametric study of naca
  airfoils}, AIAA Journal, 57, pp.~1481--1491.

\bibitem{SHJ17}
{\sc R.~Shu, J.~Hu, and S.~Jin}, {\em A stochastic galerkin method for the
  boltzmann equation with multi-dimensional random inputs using sparse wavelet
  bases}, Numerical Mathematics: Theory, Methods and Applications, 10 (2017),
  pp.~465--488.

\bibitem{SJ-TwoPhase}
{\sc R.~Shu and S.~Jin}, {\em Uniform regularity in the random space and
  spectral accuracy of the stochastic {G}alerkin method for a kinetic-fluid
  two-phase flow model with random initial inputs in the light particle
  regime}, ESAIM Math. Model. Numer. Anal., 52 (2018), pp.~1651--1678.

\bibitem{tuo2014surrogate}
{\sc R.~Tuo, C.~J. Wu, and D.~Yu}, {\em Surrogate modeling of computer
  experiments with different mesh densities}, Technometrics, 56 (2014),
  pp.~372--380.

\bibitem{Villani}
{\sc C.~Villani}, {\em A review of mathematical topics in collisional kinetic
  theory}, in Handbook of mathematical fluid dynamics, {V}ol. {I},
  North-Holland, Amsterdam, 2002, pp.~71--305.

\bibitem{Xiu07}
{\sc D.~Xiu}, {\em Efficient collocational approach for parametric uncertainty
  analysis}, Commun. Comput. Phys., 2 (2007), pp.~293--309.

\bibitem{MF4}
{\sc L.~Yan and T.~Zhou}, {\em An adaptive multi-fidelity pc-based ensemble
  kalman inversion for inverse problems}, Int. J. Uncertaitny Quantif., 9
  (2019), pp.~205--220.

\bibitem{MF3}
{\sc L.~Yan and T.~Zhou}, {\em Adaptive multi-fidelity polynomial chaos
  approach to {B}ayesian inference in inverse problems}, J. Comput. Phys., 381
  (2019), pp.~110--128.

\bibitem{YangTT18}
{\sc X.~Yang, G.~Tartakovsky, and A.~Tartakovsky}, {\em Physics-informed
  kriging: A physics-informed {G}aussian process regression method for
  data-model convergence}, arXiv preprint arXiv:1809.03461,  (2018).

\bibitem{ZNX14}
{\sc X.~Zhu, A.~Narayan, and D.~Xiu}, {\em Computational aspects of stochastic
  collocation with multifidelity models}, SIAM/ASA J. Uncertain. Quantif., 2
  (2014), pp.~444--463.

\bibitem{ZX17}
{\sc X.~Zhu and D.~Xiu}, {\em A multi-fidelity collocation method for
  time-dependent parameterized problems}, AIAA SciTech Forum, 19th AIAA
  Non-Deterministic Approaches Conference,  (2017).

\end{thebibliography}

\end{document}